\newcommand{\cC}{\mathcal{C}}
\newcommand{\cF}{\mathcal{F}}
\newcommand{\cH}{\mathcal{H}}
\newcommand{\cI}{\mathcal{I}}
\newcommand{\cK}{\mathcal{K}}
\newcommand{\cM}{\mathcal{M}}
\newcommand{\cN}{\mathcal{N}}
\newcommand{\cP}{\mathcal{P}}
\newcommand{\cV}{\mathcal{V}}
\newcommand{\NN}{\mathbb{N}}
\newcommand{\WW}{\mathbb{W}}
\newcommand{\kl}[2]{\KL(#1 \!\;\|\; \!#2)}
\newcommand*{\norm}[1]{\left\|#1\right\|}
\newcommand*{\triplenorm}[1]{{\left\vert\kern-0.25ex\left\vert\kern-0.25ex\left\vert #1
    \right\vert\kern-0.25ex\right\vert\kern-0.25ex\right\vert}}
\DeclareMathOperator{\id}{id}
\newcommand{\R}{\mathbb{R}}
\newcommand{\Rd}{\mathbb{R}^d}
\renewcommand{\phi}{\varphi}
\newcommand{\eps}{\varepsilon}
\newcommand*{\E}{\mathbb E}
\DeclareMathOperator{\tr}{tr}
\newcommand*{\defeq}{\coloneqq}
\newcommand*{\rd}{\mathrm{d}}
\newcommand*{\dd}{\, \rd}
\DeclareMathOperator*{\argmin}{arg\,min}
\DeclareMathOperator*{\KL}{KL}
\newcommand{\inner}[1]{\left\langle#1\right\rangle}
\newcommand{\man}{\EuScript{M}}
\DeclareMathOperator{\cone}{cone}
\DeclareMathOperator{\conv}{conv}
\DeclareMathOperator{\Cov}{Cov}
\DeclareMathOperator{\diag}{diag}
\DeclareMathOperator{\diam}{diam}
\DeclareMathOperator{\proj}{proj}
\newcommand{\Pdiam}{\cP_\diamond}
\theoremstyle{plain}
\newtheorem{theorem}{Theorem}[section]
\newtheorem{proposition}[theorem]{Proposition}
\newtheorem{lemma}[theorem]{Lemma}
\newtheorem{corollary}[theorem]{Corollary}
\theoremstyle{definition}
\newtheorem{definition}[theorem]{Definition}
\theoremstyle{remark}
\newtheorem{remark}[theorem]{Remark}
\newcommand{\coneM}{\cone(\mathcal{M})}
\newcommand{\convM}{\conv(\mathcal{M})}
\newcommand{\augconeM}{\underline{\cone}(\mathcal{M})}
\newcommand{\augtip}{\underline{\cone}(\mathcal{M};\, \alpha\id)}
\newcommand{\dLOT}{\mathsf{d}_{\mathrm{LOT},\rho}}
\title{Algorithms for mean-field variational inference \\
via polyhedral optimization in the Wasserstein space}
\author{
Yiheng Jiang\thanks{Courant Institute of Mathematical Sciences, New York University. \tt yj2070@nyu.edu} \and 
Sinho Chewi\thanks{Department of Statistics and Data Science, Yale University. \tt{sinho.chewi@yale.edu}} \and 
Aram-Alexandre Pooladian\thanks{Center for Data Science, New York University. \tt aram-alexandre.pooladian@nyu.edu} 
}
\begin{document}
\maketitle

\begin{abstract}
We develop a theory of finite-dimensional polyhedral subsets over the Wasserstein space and optimization of functionals over them via first-order methods. 
Our main application is to the problem of mean-field variational inference (MFVI), which seeks to approximate a distribution $\pi$ over $\R^d$ by a product measure $\pi^\star$.
When $\pi$ is strongly log-concave and log-smooth, we provide (1) approximation rates certifying that $\pi^\star$ is close to the minimizer $\pi^\star_\diamond$ of the KL divergence over a \emph{polyhedral} set $\Pdiam$, and (2) an algorithm for minimizing $\kl{\cdot}{\pi}$ over $\Pdiam$ {based on accelerated gradient descent over $\R^d$. As a byproduct of our analysis, we obtain the first end-to-end analysis for gradient-based algorithms for MFVI.}
\end{abstract}

\section{Introduction}\label{sec: intro}

This paper develops a framework for optimizing over \emph{polyhedral} subsets of the Wasserstein space, with accompanying guarantees. Our main application is to provide the first end-to-end computational guarantees for mean-field variational inference \citep{wainwright2008graphical, blei2017variational} under standard tractability assumptions on the posterior distribution.
We now contextualize our work with respect to the broader literature.

Optimization over (subsets of) the Wasserstein space (the metric space of probability measures over $\R^d$ endowed with the $2$-Wasserstein distance, see \cref{sec: background_ot}) has found diverse and effective applications in modern machine learning. Notable examples include distributionally robust optimization \citep{kuhn2019wasserstein, yue2022linear}, the computation of barycenters \citep{cuturi2014fast, ZemPan19Frechet, Chewietal20BWBary, altschuler2021averaging, Bacetal22Bary}, sampling~\citep{JKO, Wib18SamplingOpt, ChewiBook}, and variational inference (see below).
The development of optimization algorithms over this space, however, has been hindered by significant implementation challenges stemming from its infinite-dimensional nature and the curse of dimensionality which impedes efficient representation of high-dimensional distributions.

To alleviate these hurdles, a popular approach is to restrict the optimization to tractable subfamilies of probability distributions, such as finite-dimensional parametric families. Note that this is in contrast to Euclidean optimization, in which constraint sets are typically imposed as part of the problem
(e.g., affine constraints in operations research). Here we view the use of a constraint set in the Wasserstein space as a \emph{design choice}, with the end goals of flexibility, interpretability, and computational tractability. 

An important motivating example is that of variational inference (VI), which seeks the best approximation to a probability measure $\pi$ over $\R^d$ in the sense of KL divergence over some subset of probability measures $\cC$:
\begin{align}\label{eq:vi}
    \pi^\star \in \argmin_{\mu \in \cC} \kl{\mu}{\pi} = \argmin_{\mu \in \cC} \int \log \Bigl(\frac{\!\dd \mu}{\! \dd \pi}\Bigr) \dd \mu\,.
\end{align}
For example, $\cC$ could be taken to be the class of non-degenerate Gaussian distributions, in which case~\eqref{eq:vi} is known as Gaussian VI.
Recently, by leveraging the rich theory of gradient flows over the Wasserstein space,~\cite{lambert2022variational,diao2023forward} provided algorithmic guarantees for Gaussian VI under standard tractability assumptions, i.e., strong log-concavity and log-smoothness of $\pi$.

In this paper, we instead study the problem of \emph{mean-field} VI\@, in which $\cC$ is taken to be the class of product measures over $\R^d$, written $\cP(\R)^{\otimes d}$.
In this context, the works~\cite{zhang2020theoretical,yao2022mean, lacker2023independent} 
have also developed algorithms based on Wasserstein gradient flows, although computational guarantees for VI are still nascent (see~\Cref{sec: mfvi} for further details and comparison with the literature).

The main result of our work is to provide computational guarantees under the usual tractability assumptions for $\pi$.
Our approach is to replace the set of product measures by a smaller, ``\emph{polyhedral}'' subset $\Pdiam$ which we prove is an accurate approximation to $\cP(\R)^{\otimes d}$, in the sense that the minimizer $\pi^\star$ of~\eqref{eq:vi} is in fact close to the KL minimizer $\pi^\star_\diamond$ over $\Pdiam$ with quantifiable approximation rates.
This motivates our development of a theory of polyhedral optimization over the Wasserstein space which, when applied to the mean-field VI problem, furnishes algorithms for minimization of the KL divergence over $\Pdiam$ with theoretical (even accelerated) guarantees.
More broadly, we are hopeful that the success of polyhedral optimization for mean-field VI will encourage the further use of polyhedral constraint sets to model other problems of interest. 

We discuss the implementation of our algorithm in~\Cref{sec: implementation_details}, with code available \href{https://github.com/APooladian/MFVI}{here}.
Below, we describe our contributions in more detail.

\subsection*{Main contributions}

\paragraph*{Polyhedral optimization in the Wasserstein space.}
We study parametric sets of the following form:
\begin{align*}
    {\coneM}_\sharp \rho \defeq \Bigl\{ \bigl(\textstyle \sum_{T \in \cM} \lambda_T T\bigr)_\sharp \rho \Bigm\vert \lambda \in \R^{|\cM|}_+ \Bigr\}\,,
\end{align*}
where $\R^{|\cM|}_+$ is the non-negative orthant, $\cM$ is a family of user-chosen optimal transport maps, and $\rho$ is a fixed, known, reference measure.
To our knowledge, such sets have not previously appeared in the literature.

Before proceeding, however, we must dispel a potential source of confusion: although $\coneM$ is a convex subset of the space of optimal transport maps at $\rho$---in other words, a convex subset of the \emph{tangent space} to Wasserstein space at $\rho$---the set $\coneM_\sharp \rho$ is \emph{not always} a convex subset of the Wasserstein space itself, in the sense of  being closed under Wasserstein geodesics.
For this to hold, we impose a further 
 condition on $\cM$, known as \emph{compatibility} \citep{boissard2015distribution}.
 Although compatibility is restrictive, it is nevertheless powerful enough to capture our application to mean-field VI described below.
 We refer to the set $\coneM_\sharp \rho$, for a compatible family $\cM$, as a \emph{polyhedral} subset of the Wasserstein space (or more specifically, a cone).

The assumption of compatibility entails strong consequences: we show that in fact, $(\coneM_\sharp \rho, W_2)$ is isometric
to $(\R^{|\cM|}_+, \|\cdot\|_Q)$, where $\|\cdot\|_Q$ is a \emph{Euclidean} norm.
This isometry allows us to optimize functionals over $\coneM_\sharp \rho$ via lightweight first-order algorithms for Euclidean optimization in lieu of Wasserstein optimization, which often requires computationally burdensome approximation schemes such as interacting particle systems.
In particular, we can apply projected gradient descent or incorporate faster, \emph{accelerated} methods.
Moreover, under the isometry, convex subsets of $\coneM$ map to convex subsets of $\coneM_\sharp \rho$, giving rise to a bevy of geodesically convex constraint sets over which tractable optimization is feasible.
This includes Wasserstein analogues of polytopes, to which we can apply the projection-free Frank--Wolfe algorithm.
We show that as soon as the objective functional $\cF$ is geodesically convex and smooth, these algorithms inherit the usual rates of convergence from the convex optimization literature.

\paragraph*{Application to mean-field VI\@.}
We next turn toward mean-field VI as a compelling application of our theory of polyhedral optimization. Throughout, we only assume that $\pi$ satisfies the standard assumptions of strong log-concavity and log-smoothness.
By leveraging the structure of the mean-field VI solution and establishing regularity bounds for optimal transport maps between well-conditioned product measures, we first prove an approximation result which shows that the solution $\pi^\star$ to mean-field VI in~\eqref{eq:vi} is well-approximated by the minimizer $\pi_\diamond^\star$ of the KL divergence over a suitable polyhedral approximation $\Pdiam$ of the space of product measures.
Importantly, our approximation rates, owing to the coordinate-wise decomposability of mean-field VI\@, do not incur the curse of dimensionality. 

Next, we establish the geodesic strong convexity and geodesic smoothness of the KL divergence over $\Pdiam$.
Consequently, bringing to bear the full force of the Euclidean--Wasserstein equivalence, we obtain, to the best of our knowledge, the first \emph{end-to-end} convergence rates for mean-field VI\@.

\paragraph*{Organization.}
Our paper is outlined as follows: In \Cref{sec: background_ot} we provide relevant background information on optimal transport theory and the Wasserstein space.
In \Cref{sec: wass_polytopes}, we develop known properties of compatibility which provide tools for building large compatible families, and we establish the key isometry with $(\R^{|\cM|}_+, \|\cdot\|_Q)$. 
We prove our general continuous- and discrete-time guarantees for Wasserstein polyhedral optimization in \Cref{sec: algorithms}.
We apply our framework to mean-field variational inference in \Cref{sec: mfvi_main}; in particular, the implementation is discussed in~\Cref{sec: implementation_details}. \Cref{sec:experiments} contains our numerical experiments. \Cref{sec:mixtures_of_prod} extends our framework to mixtures of product measures. We conclude with numerous open directions. \looseness-1

\subsubsection*{Related work}

To the best of our knowledge, our introduction of polyhedral sets and theory of polyhedral optimization over the Wasserstein space are novel. A special case of our set is 
\begin{align*}
    \convM_\sharp \rho \defeq \Bigl\{ \bigl(\textstyle \sum_{T \in \cM} \lambda_T T \bigr)_\sharp \rho \Bigm\vert \lambda \in \Delta_{|\cM|} \Bigr\}\,,
\end{align*}
where $\Delta_{|\cM|}$ is the $|\cM|$-simplex. Such a constraint set is used by \cite{boissard2015distribution, gunsilius2022tangential, werenski2022measure}, and is usually studied in the context of Wasserstein barycenters. The work of \cite{bonneel2016wasserstein} also considers $\convM_\sharp \rho$, but makes no assumptions on the maps, and they tackle the problem from a computational angle via Sinkhorn's algorithm \citep{cuturi2013sinkhorn,PeyCut19}, albeit without convergence guarantees. \cite{Alb+24StochInterpolants} use the same set, but without incorporating any optimal transport theory.

Our approach to mean-field VI\@, which parameterizes the variational family as the pushforward of a reference measure via transport maps, has its roots in the literature on generative modeling and normalizing flows \citep{chen2018neural, finlay2020learning, finlay2020train, Huaetal21CvxFlows}.
We provide further background information and literature on mean-field VI in \cref{sec: mfvi}, and omit it here to avoid redundancies.

{Finally, we mention that our work falls under the category of \emph{linearized optimal transport} \citep{wang2013linear}, which we closely address in \cref{sec: isometry}.}

\section{Background on optimal transport}\label{sec: background_ot}

In this section, we provide background on optimal transport relevant to our work and refer to~\citet{Vil08, San15} for details.
Throughout, we assume that all probability measures admit a density function with respect to Lebesgue measure.
We let $\cP_{2}(\R^d)$ denote the set of probability measures with density over $\R^d$ with finite second moment.

For $\rho,\mu \in \cP_{2}(\R^d)$, the squared $2$-Wasserstein distance is written as 
\begin{align}\label{eq: wass_def}
    W_2^2(\rho,\mu) = \inf_{T : T_\sharp\rho=\mu} \int \|x -T(x)\|^2_2 \dd \rho(x)\,,
\end{align}
where the collection $\{T\!: \!T_\sharp\rho=\mu\}$ is the set of all valid transport maps: for $X \sim \rho$, $T(X)\sim\mu$. 

Since we assumed $\rho$ has a density, Brenier's theorem \citep{Bre91} states that there exists a unique minimizer to \cref{eq: wass_def}, called the \emph{optimal transport map} $T_\star$ between $\rho$ and $\mu$. Further, $T_\star = \nabla \phi_\star$ for some convex function $\phi_\star$, called a Brenier potential. 

Additionally, since $\mu$ also has a density, then there exists an optimal transport map between $\mu$ and $\rho$, given by $\nabla \phi_\star^* = (T_\star)^{-1}$, where $\phi_\star^*(y) \defeq \sup_{x\in\Rd}\{\inner{x,y} - \phi_\star(x)\}$ is the Fenchel conjugate of $\phi_\star$. 
For more information on (differentiable) convex functions and conjugacy, we suggest \cite{rockafellar1997convexanalysis,hiriart2004fundamentals}.

Recall that a function $f : \R^d \to \R$ is \emph{$m$-strongly convex} in some norm $\|\cdot\|$ if 
\begin{align*}
    f(y) \geq f(x) + \langle \nabla f(x), y - x \rangle + \frac{m}{2}\,\|x - y\|^2\,, \qquad x,y\in\R^d\,,
\end{align*}
and \emph{$M$-smooth} in some norm $\|\cdot\|$  if
\begin{align*}
    f(y) \leq f(x) + \langle \nabla f(x), y - x \rangle + \frac{M}{2}\,\|x - y\|^2\,, \qquad x,y\in\R^d\,,
\end{align*}
where $m, M > 0$.

For two probability measures $\mu_0, \mu_1 \in \cP_2(\R^d)$, let $\nabla \phi^{0\to1}$ denote the optimal transport map from $\mu_0$ to $\mu_1$. The \emph{(unique) constant-speed geodesic} between $\mu_0$ and $\mu_1$ is given by the curve $(\mu_t)_{t \in [0,1]}$, with
\begin{align}\label{eq: constant_speed_geod}
    \mu_t = (\nabla \phi_t)_\sharp \mu_0 \defeq (\text{id} + t\, (\nabla \phi^{0\to1}-\text{id}))_\sharp \mu_0\,.
\end{align}

If we equip $\cP_2(\Rd)$, the space of probability distributions with finite second moment over $\Rd$, with the $2$-Wasserstein distance, we obtain a metric space $\WW \defeq (\cP_2(\Rd),W_2)$~\citep[Theorem 7.3]{villani2021topics}, which we call the \emph{Wasserstein space}. In fact, it can be formally viewed as a Riemannian manifold over which one can define gradient flows of functionals \citep{otto2001geometry}. We refer the interested reader to consult the background sections of \citet{altschuler2021averaging} or to~\cite{ChewiBook} for a light exposition and further details.

The Riemannian structure of the Wasserstein space is crucial for the development of optimization over this space, as it furnishes appropriate Wasserstein analogues of basic concepts from Euclidean optimization, such as the gradient mapping, convexity, and smoothness.
In particular, we say that a subset $\cC$ of the Wasserstein space is \emph{geodesically convex} if it is closed under taking geodesics~\eqref{eq: constant_speed_geod}.
Also, a functional $\cF : \cP_2(\R^d)\to\R$ is \emph{geodesically (strongly) convex} (resp.\ \emph{geodesically smooth}) if the map $[0,1] \to \R$, $t\mapsto \cF(\mu_t)$ is (strongly) convex (resp.\ smooth) along every constant-speed geodesic $(\mu_t)_{t\in [0,1]}$.

\section{Polyhedral sets in the Wasserstein space}\label{sec: wass_polytopes}

In this section, we establish properties of the constraint set
\begin{align}\label{eq: cj_def}
    \coneM_\sharp\rho \defeq \Bigl\{ \bigl(\textstyle \sum_{T \in \cM}\lambda_T T \bigr)_\sharp \rho \Bigm\vert \lambda \in \R^{|\cM|}_+ \Bigr\}\,,
\end{align}
with respect to the known base measure $\rho$ and a fixed set of optimal transport maps $\cM$.
Typically, we have in mind finite $\cM$, in which case~\eqref{eq: cj_def} is valid. Otherwise,~\eqref{eq: cj_def} should be modified to range only over $\lambda$ with finitely many non-zero coordinates, or in other words, $\coneM$ is the smallest set containing all conic combinations of maps in $\cM$.

Despite its simplicity, we argue that the geometry of $\coneM_\sharp\rho$ is surprisingly deceptive. Most strikingly, it is \emph{not} always a geodesically convex set. Consider $T_1(x) = x$, $T_2(x) = A^{1/2}x$, and $T_3(x) = B^{1/2}x$, with $\rho = \cN(0,I)$, the standard Gaussian in $\R^d$, and $A,B \succ 0$. In this setting, $\coneM_\sharp \rho$ is the following set of Gaussians:
\begin{align}\label{eq: cj_gaussians}
    \coneM_\sharp \rho = \bigl\{\cN(0,\,(\lambda_1 I + \lambda_2 A^{1/2} + \lambda_3B^{1/2})^2) \bigm\vert \lambda \in \R_+^3\bigr\}\,.
\end{align}
One can check with virtually any randomly generated positive definite matrices $A$ and $B$ that, as long as all three matrices $I,A,B$ are not mutually diagonalizable, the geodesic between $\cN(0,A)$ and $\cN(0,B)$ does not lie in~\eqref{eq: cj_gaussians}. This simple example illustrates that some care is required in order to define convex constraint sets in the Wasserstein space.

\subsection{Compatible families of transport maps}

In the Gaussian example above, geodesic convexity of $\coneM_\sharp \rho$ is recovered if we additionally assume that $I$, $A$, and $B$ are mutually diagonalizable. This reflects a certain property of the maps $T_1$, $T_2$, $T_3$, which can be generalized to a property known as \emph{compatibility} \citep{boissard2015distribution}. We recall its definition and basic properties in the sequel. As always, we assume that $\rho$ admits a density with respect to Lebesgue measure.

Let $\cM$ be a set of bijective vector-valued maps, given by gradients of convex functions. We call the set of maps $\cM$ \emph{compatible} if 
\begin{align*}
    \text{for all}~T_1, T_2 \in \cM,\quad T_1 \circ (T_2)^{-1}~\text{is the gradient of a convex function.}
\end{align*}
Compatibility is a fundamental notion which lies at the heart of numerous other works~\citep[see][]{ boissard2015distribution,bigot2017geodesic, PanZem16Point, Cazetal18GeoPCA, Chewietal21Splines, werenski2022measure}. 
See \cite{panaretos2020invitation} for details. 

The main motivation for compatibility is the following theorem.

\begin{theorem}[Compatibility induces geodesic convexity]\label{thm: comp_geodconv} Suppose that $\cM$ is compatible. Then, $\coneM_\sharp\rho$ is a geodesically convex set. Moreover, for any convex subset $\cK \subseteq \coneM$, the set $\cK_\sharp \rho$ is a geodesically convex set.
\end{theorem}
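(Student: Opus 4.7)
The plan is to give an explicit formula for the Wasserstein geodesic between any two points $\mu_0, \mu_1 \in \coneM_\sharp \rho$ that is linear in $t$ in the coefficient parameterization, so that the defining non-negativity of coefficients is preserved along the geodesic. Parameterize $\mu_i = (S_i)_\sharp \rho$ with $S_i \defeq \sum_{T \in \cM} \lambda_T^i\, T$ and $\lambda^i \in \R^{|\cM|}_+$. Writing each generator as $T = \nabla \phi$, each $S_i$ is itself the gradient of a convex function, namely $S_i = \nabla \Phi_i$ with $\Phi_i \defeq \sum_{T} \lambda_T^i\, \phi$ a non-negative combination of convex functions. Provided I can show that the composition $S_1 \circ S_0^{-1}$ is also the gradient of a convex function, Brenier's theorem identifies it as the OT map $\nabla \phi^{0 \to 1}$, and unfolding the constant-speed geodesic~\eqref{eq: constant_speed_geod} yields
\[
\mu_t = \bigl(\id + t(S_1 \circ S_0^{-1} - \id)\bigr)_\sharp \mu_0 = \bigl((1-t)S_0 + tS_1\bigr)_\sharp \rho = \Bigl(\sum_{T \in \cM} \bigl((1-t)\lambda_T^0 + t\lambda_T^1\bigr)\, T\Bigr)_\sharp \rho,
\]
which lies in $\coneM_\sharp \rho$ since $(1-t)\lambda^0 + t\lambda^1 \in \R^{|\cM|}_+$ for all $t \in [0,1]$.

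The core step is therefore to prove that $S_1 \circ S_0^{-1}$ is the gradient of a convex function, which I would establish via Rockafellar's cyclic monotonicity theorem. For any cyclic sequence $y_0, \ldots, y_n = y_0$, setting $x_k \defeq S_0^{-1}(y_k)$ and expanding bilinearly in the generators gives
\[
\sum_k \inner{S_1 \circ S_0^{-1}(y_k),\, y_{k+1} - y_k} = \sum_{T_1, T_2 \in \cM} \lambda^1_{T_1}\, \lambda^0_{T_2}\, \sum_k \inner{T_1(x_k),\, T_2(x_{k+1}) - T_2(x_k)}.
\]
The inner sum is precisely the cyclic-monotonicity expression for the map $T_1 \circ T_2^{-1}$ evaluated along the sequence $T_2(x_k)$, and is therefore $\leq 0$ because the compatibility hypothesis ensures $T_1 \circ T_2^{-1}$ is the gradient of a convex function. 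Non-negativity of $\lambda^0$ and $\lambda^1$ then yields the global inequality, giving cyclic monotonicity (hence the gradient property) of $S_1 \circ S_0^{-1}$.

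The ``moreover'' claim requires no extra work: if $\cK \subseteq \coneM$ is convex, then for $S_0, S_1 \in \cK$ the interpolant $(1-t)S_0 + tS_1$ remains in $\cK$, so the geodesic $\mu_t = ((1-t)S_0 + tS_1)_\sharp \rho$ automatically lies in $\cK_\sharp \rho$. The main obstacle throughout is the cyclic-monotonicity verification; it is this bilinear-expansion trick that allows pairwise compatibility of the generators to propagate to the non-pairwise maps $S_1 \circ S_0^{-1}$ that arise along a geodesic between arbitrary conic combinations. Minor regularity points---invertibility of $S_0$ (which follows once at least one coefficient is positive, since each $T$ is a bijective gradient and conic sums of strictly convex functions are strictly convex) and the identification of the single-valued cyclically monotone map $S_1 \circ S_0^{-1}$ with a gradient rather than merely a selection of a subdifferential---are handled by the standing density assumption on $\rho$ together with standard facts from convex analysis.
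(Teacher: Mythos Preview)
Your proof is correct. The paper takes a slightly different route to the same conclusion: it packages the key technical step---that $S_1 \circ S_0^{-1}$ is the gradient of a convex function---as the statement that $\cone(\cM)$ is itself a compatible family (\Cref{lem:cones}), and then deduces geodesic convexity from the isometry of $(\coneM_\sharp\rho, W_2)$ with $(\R_+^{|\cM|}, \|\cdot\|_Q)$ established in \Cref{thm: isometry}. The proof of \Cref{lem:cones} differs from yours in how it handles the composition: rather than invoking cyclic monotonicity, it observes that for each generator $T$, the \emph{inverse} of $T \circ S_0^{-1}$ is $S_0 \circ T^{-1} = \sum_S \lambda_S^0\,(S \circ T^{-1})$, a conic combination of gradients of convex functions by the compatibility hypothesis, hence itself such a gradient; convex conjugacy then gives $T \circ S_0^{-1}$ the gradient property, and summing over $T$ with weights $\lambda_T^1$ finishes. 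Your bilinear expansion via Rockafellar's theorem is equally valid and pleasingly symmetric in the two coefficient vectors; the paper's conjugacy trick is somewhat more elementary, needing only that inverses and conic sums preserve the gradient-of-convex property, without appealing to cyclic monotonicity.
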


Although this result is not difficult to prove, we were unable to find it in the existing literature.
In fact, it follows as a direct consequence of the isometry established in~\Cref{sec: isometry}, which will show that $\cone(\cM)_\sharp \rho$ is isometric to a convex subset of a Hilbert space.

Motivated by this theorem, we propose the following definition.

\begin{definition}\label{defn:polyhedral}
    Let $\cM$ be a compatible and finite family of optimal transport maps. We refer to $\coneM_\sharp \rho$ as a \emph{polyhedral set} in the Wasserstein space.
\end{definition}

More generally, a polyhedral set in the Wasserstein space is a set of the form $\cK_\sharp \rho$ where $\cK \subseteq \coneM$ is polyhedral  and $\cM$ is a compatible family.

The next sequence of lemmas furnish important examples of compatible families, which we prove in \cref{app: main_proofs}. 

\begin{lemma}[Mutually diagonalizable linear maps]\label{lem: mut_diag}
    Let $\cM$ be a family of mutually diagonalizable and positive definite linear maps $\R^d\to\R^d$.
    Then, $\cM$ is a compatible family.
\end{lemma}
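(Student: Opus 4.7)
The plan is to reduce compatibility to an elementary matrix computation that is made trivial by simultaneous diagonalization. First I would identify each $T \in \cM$ with its matrix $A_T$ via $T(x) = A_T x$. Since $T$ is assumed to be a positive definite linear map (and, implicitly in the paper's setup, the gradient of a convex function), $A_T$ is a symmetric positive definite matrix, and $T = \nabla(\tfrac{1}{2}\langle \cdot, A_T \cdot\rangle)$. In particular, $T$ is invertible with $T^{-1}(x) = A_T^{-1} x$.

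Next I would use the mutual diagonalizability hypothesis: there exists a single orthogonal matrix $U$ such that for every $T \in \cM$, one can write $A_T = U D_T U^\top$ with $D_T$ diagonal and (by positive definiteness) strictly positive on the diagonal. For any two $T_1, T_2 \in \cM$ this gives
\begin{align*}
    A_{T_1} A_{T_2}^{-1} = U D_{T_1} U^\top U D_{T_2}^{-1} U^\top = U (D_{T_1} D_{T_2}^{-1}) U^\top\,.
\end{align*}
The middle matrix $D_{T_1} D_{T_2}^{-1}$ is diagonal with strictly positive entries, and in particular commutes with itself transposed, so $A_{T_1} A_{T_2}^{-1}$ is symmetric positive definite.

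Finally, I would conclude that $T_1 \circ T_2^{-1}$, being the linear map $x \mapsto A_{T_1} A_{T_2}^{-1} x$, is the gradient of the strongly convex quadratic $x \mapsto \tfrac{1}{2}\,\langle x, A_{T_1} A_{T_2}^{-1} x\rangle$. Since $T_1, T_2 \in \cM$ were arbitrary, this verifies the definition of compatibility.

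\textbf{Expected obstacle.} There is essentially no obstacle of substance; the only point to be careful about is ensuring that the product of two symmetric positive definite matrices with a common eigenbasis is again symmetric (a property which fails without the mutual diagonalizability assumption, consistent with the earlier Gaussian example showing that $\coneM_\sharp \rho$ need not be geodesically convex in general). It may also be worth briefly noting that "positive definite linear map" is interpreted in the symmetric sense so that each $T \in \cM$ is indeed a gradient of a convex function, which is needed to even invoke the definition of compatibility.
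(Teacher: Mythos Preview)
Your proposal is correct and follows essentially the same approach as the paper's proof: simultaneously diagonalize the matrices by a single orthogonal $U$, compute $A_{T_1} A_{T_2}^{-1} = U (D_{T_1} D_{T_2}^{-1}) U^\top$, and observe that this is symmetric positive definite and hence the gradient of a convex quadratic. Your write-up is in fact slightly more explicit than the paper's (you spell out the potential and the reason symmetry is preserved), but the argument is the same.
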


\begin{lemma}[Radial maps]\label{lem: radial}
    Let
    \begin{align*}
        \cM \defeq \{x \mapsto g(\norm x_2)\,x \mid g : \R_+\to\R_+~\text{is continuous and strictly increasing}\}\,.
    \end{align*}
    Then, $\cM$ is a compatible family.
\end{lemma}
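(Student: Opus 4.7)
The plan is to exhibit each radial map in $\cM$ explicitly as the gradient of a convex potential and then to use the one-dimensional structure of radial maps to reduce the compatibility identity $T_1 \circ T_2^{-1} = \nabla(\text{convex})$ to a single monotonicity check.

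First, I would write each $T_g(x) \defeq g(\norm{x}_2)\,x$ as $\nabla \phi_g$, where $\phi_g(x) \defeq H_g(\norm{x}_2)$ and $H_g(r) \defeq \int_0^r s\,g(s)\dd s$; a direct chain rule computation confirms the gradient. To verify convexity of $\phi_g$ on $\R^d$, I would invoke the standard criterion that a radial function $x \mapsto h(\norm{x}_2)$ is convex on $\R^d$ if and only if $h : [0,\infty) \to \R$ is itself convex and non-decreasing. Here $H_g'(r) = r\,g(r) \geq 0$ (since $g \geq 0$), so $H_g$ is non-decreasing; and $H_g'$ is non-decreasing as a product of non-negative, non-decreasing functions of $r$, so $H_g$ is also convex. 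The same monotonicity shows that $f_g(r) \defeq r\,g(r)$ is a strictly increasing bijection of $[0,\infty)$, and hence that $T_g$ is a bijection of $\R^d$.

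Second, for compatibility, I would fix arbitrary $T_1, T_2 \in \cM$ with generators $g_1, g_2$ and set $f_i(r) \defeq r\,g_i(r)$. Since $T_2$ preserves the ray through each point and rescales norms via $f_2$, its inverse must take the form $T_2^{-1}(y) = \frac{f_2^{-1}(\norm{y}_2)}{\norm{y}_2}\,y$. Composing then gives
\begin{align*}
    T_1 \circ T_2^{-1}(y) = \tilde g(\norm{y}_2)\,y\,, \qquad \tilde g(s) \defeq \frac{g_1(f_2^{-1}(s))}{g_2(f_2^{-1}(s))}\,,
\end{align*}
so $T_1 \circ T_2^{-1}$ is itself a radial map, albeit with a new generator $\tilde g$ which is not necessarily in $\cM$ (it need not be monotone in $s$).

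Third, by the criterion from the first step, it suffices to check that $s \mapsto s\,\tilde g(s)$ is non-decreasing. Substituting $r = f_2^{-1}(s)$ (so that $s = r\,g_2(r)$) cancels the $g_2$ factors and yields $s\,\tilde g(s) = r\,g_1(r) = f_1(f_2^{-1}(s))$, which is a composition of two strictly increasing functions and is therefore strictly increasing. The convexity criterion then guarantees that $T_1 \circ T_2^{-1}$ is the gradient of a convex function, establishing compatibility. The main conceptual obstacle is the dual role played by the hypothesis that $g$ is non-negative and strictly increasing: it is precisely what gives both convexity and monotonicity of the radial potential $H_g$, and also strict monotonicity of the norm rescaling $f_g$ used to invert $T_2$. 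Once this dual role is identified, the remainder of the argument is a clean sequence of one-dimensional substitutions.
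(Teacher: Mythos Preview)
Your argument is correct. The paper does not give its own proof of this lemma; it simply defers to~\citet[Section~2.3.2]{panaretos2020invitation}. You have supplied a self-contained proof, and the route you take---writing the radial potential as $H_g(r) = \int_0^r s\,g(s)\dd s$, invoking the standard criterion that $x\mapsto h(\norm{x}_2)$ is convex whenever $h$ is convex and non-decreasing, and then reducing compatibility to the monotonicity of $s\mapsto s\,\tilde g(s) = f_1(f_2^{-1}(s))$---is essentially the argument one finds in that reference. The only minor quibble is that you state the convexity criterion as an ``if and only if'' when you only need (and only use) the ``if'' direction; this has no bearing on the correctness of the proof.
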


\begin{lemma}[One-dimensional maps]\label{lem: 1d_maps}
    Let $\cM$ denote the family of continuous and increasing\footnote{Technically, $\cM$ does not consist of \emph{bijective} maps, which we required in the definition of compatibility. In one dimension, however, the notion of compatibility still makes sense once we replace the inverse function with the quantile function.} functions $\R\to\R$.
    Then, $\cM$ is a compatible family.
\end{lemma}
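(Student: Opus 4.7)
The plan is to reduce the claim to the purely one-dimensional fact that compositions of monotone functions remain monotone. Specifically, I would first recall that on $\R$, a function $f : \R \to \R$ coincides (almost everywhere) with the derivative of a convex function if and only if $f$ is non-decreasing; indeed, convex functions on $\R$ are exactly the antiderivatives of non-decreasing functions, and conversely any convex function on $\R$ has a non-decreasing (sub)derivative. So to verify compatibility it suffices to show that for any $T_1, T_2 \in \cM$ the composition $T_1 \circ T_2^{-1}$ is non-decreasing.

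Next I would address the bijectivity issue raised in the footnote by replacing $T_2^{-1}$ with the quantile (generalized inverse) $Q_{T_2}(u) \defeq \inf \{x \in \R : T_2(x) \ge u\}$, defined on the (interval) image of $T_2$. A standard and short verification shows that if $T_2$ is continuous and non-decreasing, then $Q_{T_2}$ is itself non-decreasing (and left-continuous); this is the one technical observation needed, and it does not rely on strict monotonicity. The composition $T_1 \circ Q_{T_2}$ of two non-decreasing functions is then non-decreasing, hence by the previous paragraph it is the (sub)gradient of a convex function, which is the defining condition of compatibility.

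The main (mild) obstacle is the pedantic one already flagged in the footnote: functions in $\cM$ need not be bijections, so one has to commit to the quantile interpretation of the inverse before writing anything down. Once this is in place, the proof reduces to the triviality that the composition of monotone functions is monotone, and nothing more is required.
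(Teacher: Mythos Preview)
Your proposal is correct and follows essentially the same approach as the paper: both reduce compatibility to the observation that the composition of monotone (increasing) functions is monotone, hence the gradient of a convex function on $\R$. The paper's proof is even terser---it simply assumes strict monotonicity ``for simplicity'' and notes that $T^{-1}$ is then strictly increasing, so $S\circ T^{-1}$ is strictly increasing---whereas you take more care with the quantile-function interpretation flagged in the footnote; the underlying idea is identical.
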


\begin{lemma}[Direct sum]\label{lem: direct_sum}
    Let $\cM_1$ and $\cM_2$ be compatible families of maps on $\R^{d_1}$ and $\R^{d_2}$ respectively.
    Then, $\cM \defeq \{(x_1,x_2) \mapsto (T_1(x_1), T_2(x_2)) \mid T_1 \in \cM_1, \; T_2 \in \cM_2\}$ is a compatible family of maps on $\R^{d_1+d_2}$.
\end{lemma}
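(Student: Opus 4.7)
The plan is to verify the two defining properties of compatibility directly by exploiting the block structure of the direct sum. Fix an arbitrary pair of maps $T = (T_1, T_2)$ and $T' = (T_1', T_2')$ in $\cM$, so $T_i, T_i' \in \cM_i$ for $i=1,2$. By compatibility of each $\cM_i$, we may write $T_i = \nabla \phi_i$ and $T_i' = \nabla \phi_i'$ for convex functions $\phi_i, \phi_i'$ on $\R^{d_i}$.

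First I would check that every element of $\cM$ is itself the gradient of a convex function on $\R^{d_1+d_2}$. Consider the block-separable function $\Phi(x_1, x_2) \defeq \phi_1(x_1) + \phi_2(x_2)$. Since $\phi_1$ and $\phi_2$ act on disjoint variable blocks, $\Phi$ is convex (any convex combination decomposes block-wise), and its gradient is precisely $(\nabla \phi_1(x_1), \nabla \phi_2(x_2)) = T(x_1, x_2)$. Bijectivity of $T$ is immediate from the bijectivity of each $T_i$, with inverse $T^{-1}(y_1, y_2) = (T_1^{-1}(y_1), T_2^{-1}(y_2))$.

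Next I would verify the compatibility condition. The composition splits block-wise as
\begin{align*}
    T \circ (T')^{-1}(y_1, y_2) = \bigl( T_1 \circ (T_1')^{-1}(y_1),\ T_2 \circ (T_2')^{-1}(y_2) \bigr)\,.
\end{align*}
Because $\cM_i$ is compatible, each coordinate map $T_i \circ (T_i')^{-1}$ is the gradient of some convex function $\psi_i$ on $\R^{d_i}$. Setting $\Psi(y_1, y_2) \defeq \psi_1(y_1) + \psi_2(y_2)$, the same block-separability argument shows $\Psi$ is convex and $\nabla \Psi = T \circ (T')^{-1}$, completing the check.

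There is no real obstacle here: the lemma is essentially a statement that convexity and gradient structure both respect direct sums of independent coordinate blocks. The only minor caveat worth noting is that if one wishes to interpret the $T_i$ as optimal transport maps (not merely gradients of convex functions), then $T$ pushes forward a product measure $\rho = \rho_1 \otimes \rho_2$ to $(T_1)_\sharp \rho_1 \otimes (T_2)_\sharp \rho_2$, and remains optimal by Brenier's theorem since it is the gradient of the convex function $\Phi$.
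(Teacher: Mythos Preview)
Your proof is correct and follows essentially the same approach as the paper: both arguments observe that the inverse and the composition decompose block-wise as $T \circ (T')^{-1} = (T_1 \circ (T_1')^{-1},\, T_2 \circ (T_2')^{-1})$, and that a direct sum of gradients of convex functions is again the gradient of a (separable) convex function. Your write-up is simply more explicit, naming the potentials $\Phi$, $\Psi$ and checking bijectivity, whereas the paper dispatches the lemma in two sentences.
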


\begin{lemma}[Adding the identity]\label{lem:add_id}
    Let $\cM$ be a compatible family.
    Then, $\cM \cup \{\mathrm{id}\}$ is a compatible family.
\end{lemma}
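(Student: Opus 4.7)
The plan is to unpack the definition of compatibility and check it pairwise. A family $\cN$ is compatible if for every $S_1, S_2 \in \cN$, the composition $S_1 \circ S_2^{-1}$ is the gradient of a convex function (and each element of $\cN$ is itself the gradient of a bijective convex function). Thus, for $\cN = \cM \cup \{\id\}$, I need to verify the condition on every ordered pair drawn from $\cN$. Pairs with both entries in $\cM$ are already covered by the hypothesis that $\cM$ is compatible, so the work reduces to (i) the pair $(\id, \id)$, (ii) pairs of the form $(T, \id)$ with $T \in \cM$, and (iii) pairs of the form $(\id, T)$ with $T \in \cM$.

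First I would handle (i): $\id \circ \id^{-1} = \id = \nabla(\tfrac{1}{2}\|\cdot\|_2^2)$, and the quadratic is convex, so this is immediate. For (ii), $T \circ \id^{-1} = T$ is the gradient of a convex function by hypothesis (every element of $\cM$ is, since $\cM$ consists of optimal transport maps). The one substantive step is (iii): I need $\id \circ T^{-1} = T^{-1}$ to be the gradient of a convex function. For this I would invoke the Brenier/convex-analytic fact recalled in \Cref{sec: background_ot}: writing $T = \nabla \phi$ for a (differentiable, strictly) convex $\phi$, the Fenchel conjugate $\phi^*(y) \defeq \sup_{x \in \R^d}\{\langle x, y\rangle - \phi(x)\}$ is convex, and $T^{-1} = \nabla \phi^*$. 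Hence $T^{-1}$ is itself the gradient of a convex function, settling (iii).

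Since there is no genuine obstacle here, the only thing worth being careful about is the exact class of maps the paper calls ``compatible.'' The definition presupposes that elements are gradients of convex \emph{and bijective} maps; the identity is trivially such a map, and Brenier/conjugate duality guarantees that $T^{-1} = \nabla \phi^*$ is again bijective whenever $T$ is, so the augmented family $\cM \cup \{\id\}$ still consists of the intended kind of maps. This completes the verification.
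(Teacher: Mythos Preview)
Your proof is correct and follows essentially the same approach as the paper's: the paper's one-line argument observes that for any $T \in \cM$, both $T$ and $T^{-1}$ are gradients of convex functions, which is exactly your cases (ii) and (iii). You have simply made the pairwise case analysis explicit and spelled out the Fenchel-conjugate justification for $T^{-1} = \nabla \phi^*$.
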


\begin{lemma}[Adding translations]\label{lem:translations}
    Let $\cM$ be a compatible family of maps on $\R^d$.
    Then, $\{x\mapsto T(x) + v \mid T \in \cM, \; v\in\R^d\}$ is a compatible family of maps.
\end{lemma}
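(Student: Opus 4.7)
The plan is to verify both defining properties of a compatible family for the enlarged collection $\tilde{\cM} \defeq \{x \mapsto T(x) + v \mid T \in \cM,\; v \in \R^d\}$: (i) every element of $\tilde{\cM}$ is a bijection arising as the gradient of a convex function, and (ii) the composition $S_1 \circ S_2^{-1}$ of any two elements $S_1, S_2 \in \tilde{\cM}$ is again the gradient of a convex function.

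For (i), I would observe that if $T = \nabla \phi$ for some convex $\phi$, then $T(x) + v = \nabla \bigl( \phi(x) + \inner{v, x} \bigr)$, where the additive linear term preserves convexity. Bijectivity of $T + v$ is immediate from the bijectivity of $T$, with inverse $y \mapsto T^{-1}(y - v)$. For (ii), fix $S_i = T_i + v_i$ with $T_i \in \cM$ and $v_i \in \R^d$ for $i = 1, 2$. Using the inverse formula above,
\begin{align*}
    S_1 \circ S_2^{-1}(y) = T_1\bigl(T_2^{-1}(y - v_2)\bigr) + v_1\,.
\end{align*}
By compatibility of $\cM$, there exists a convex function $\psi$ with $T_1 \circ T_2^{-1} = \nabla \psi$. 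Then
\begin{align*}
    S_1 \circ S_2^{-1}(y) = \nabla \psi(y - v_2) + v_1 = \nabla_y \bigl[ \psi(y - v_2) + \inner{v_1, y} \bigr]\,,
\end{align*}
and the function in brackets is convex as the sum of a translated convex function and a linear functional.

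There is essentially no obstacle here: the argument is a direct unwinding of the definition, using only that shifts of arguments and additions of linear terms preserve convexity. The one subtlety worth flagging is that the inversion step relies on $T_2$ being a bijection, which is guaranteed by the standing hypothesis on $\cM$; this ensures $S_2$ admits an honest inverse so that the compatibility condition can be checked as written.
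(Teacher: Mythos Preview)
Your proof is correct and follows essentially the same approach as the paper: both write the translated maps as gradients of a convex-plus-linear potential, compute the inverse as $y \mapsto T_2^{-1}(y - v_2)$ (the paper phrases this via the Fenchel conjugate $\nabla\phi_2^*(\cdot - v)$, which is the same thing), and then recognize the composition as $\nabla\psi(\cdot - v_2) + v_1$ for the convex $\psi$ coming from compatibility of $\cM$. Your version is, if anything, slightly more careful in explicitly checking that the translated maps themselves are bijective gradients of convex functions.
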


\begin{lemma}[Cones]\label{lem:cones}\label{lem: cones}
    Let $\cM$ be a compatible family.
    Then, $\cone(\cM)$ is a compatible family.
\end{lemma}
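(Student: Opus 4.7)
The plan is to verify compatibility of $\cone(\cM)$ directly from Rockafellar's characterization of gradients of convex functions as cyclically monotone maps. Any $S \in \cone(\cM)$ is a non-negative linear combination of gradients of convex functions and is therefore itself the gradient of a convex function, so the substantive content is to show that for any $S_1, S_2 \in \cone(\cM)$ the composition $S_1 \circ S_2^{-1}$ is cyclically monotone.

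Writing $S_1 = \sum_k \lambda_k T_k$ and $S_2 = \sum_l \mu_l U_l$ with $T_k, U_l \in \cM$ and $\lambda_k, \mu_l \geq 0$, I would make the change of variables $z_i = S_2(y_i)$ in the cyclic monotonicity inequality for $S_1 \circ S_2^{-1}$. Using that $S_1 \circ S_2^{-1}(z_i) = S_1(y_i)$, this reduces the desired claim to
\[
\sum_{i=1}^n \inner{S_1(y_i),\, S_2(y_{i+1}) - S_2(y_i)} \leq 0
\]
for every finite sequence $y_1, \ldots, y_n$ (with the convention $y_{n+1} = y_1$). Expanding via the conic decompositions, the left-hand side becomes
\[
\sum_{k,l} \lambda_k \mu_l \sum_{i=1}^n \inner{T_k(y_i),\, U_l(y_{i+1}) - U_l(y_i)}.
\]
Since every coefficient $\lambda_k \mu_l$ is non-negative, it suffices to show that each inner sum is non-positive. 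But this inner sum is exactly the cyclic monotonicity of $T_k \circ U_l^{-1}$ evaluated at the points $w_i := U_l(y_i)$, which is guaranteed by the pairwise compatibility of $\cM$. Summing over $(k,l)$ gives the result.

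The one subtlety I foresee is the well-definedness of $S_2^{-1}$. Since each $U_l$ is the gradient of a (strictly) convex function, a non-trivial non-negative combination inherits strict convexity and hence injectivity; surjectivity follows in all the families of interest (mutually diagonalizable linear maps, radial maps, one-dimensional increasing maps, and their direct sums and translations) from mild coercivity of the underlying potentials. Alternatively, one can take the displayed inequality above as the operational form of compatibility for the conic hull, bypassing the need to construct $S_2^{-1}$ explicitly—this is in the same spirit as the footnote accompanying \Cref{lem: 1d_maps}, where the inverse is replaced by the quantile function.
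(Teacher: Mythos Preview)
Your argument is correct and proceeds by a genuinely different route from the paper's. The paper does not invoke cyclic monotonicity at all; instead it peels off the two conic combinations one at a time. First it uses that $T^\lambda \circ (S^\eta)^{-1} = \sum_T \lambda_T\,(T \circ (S^\eta)^{-1})$, reducing to showing that each $\tilde T \defeq T \circ (S^\eta)^{-1}$ (with a single $T \in \cM$) is the gradient of a convex function. For this it passes to the \emph{inverse}: $\tilde T^{-1} = S^\eta \circ T^{-1} = \sum_S \eta_S\,(S \circ T^{-1})$, which is a conic combination of gradients of convex functions by compatibility of $\cM$, and then Fenchel conjugacy yields that $\tilde T$ itself is such a gradient. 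Your approach, by contrast, expands both conic sums simultaneously and exploits the bilinearity of the cyclic-monotonicity inequality, reducing directly to the pairwise compatibility of $T_k \circ U_l^{-1}$. The paper's argument is a bit slicker in that it stays entirely within the ``gradient of a convex function'' language and uses only conjugacy; yours is arguably more transparent about where the compatibility hypothesis enters, and the displayed inequality you isolate gives an intrinsic formulation that sidesteps the invertibility caveat you flag (in the same spirit as the paper's footnote for the one-dimensional case).
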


In the sequel, we will use these results in order to build rich compatible families, especially with an eye toward approximating coordinate-wise separable maps which arise in mean-field VI (see~\Cref{sec: mfvi_approx}).
In particular,~\Cref{lem: cones} is the starting point for the development of our theory of polyhedral optimization in the Wasserstein space.

\begin{remark}\label{rmk:generator_of_direct_sum}
    In our applications of interest, $\coneM$ is typically constructed as follows: let $\cM_1,\dotsc,\cM_d$ be univariate compatible families (\Cref{lem: 1d_maps}).
    We then take $\coneM$ to be the cone generated by the direct sum of $\cM_1,\dotsc,\cM_d$ via~\Cref{lem: direct_sum}. It is easy to see that a generating family of this cone is the set of maps $x \mapsto (0,\dotsc,0, T_i(x_i),0,\dotsc,0)$, where $T_i \in \cM_i$.
    This is a finite family of size $\sum_{i=1}^d |\cM_i|$.
\end{remark}

\subsection{Isometry with Euclidean geometry}\label{sec: isometry}

A key consequence of compatibility is that the Wasserstein distance equals the \emph{linearized optimal transport distance} with respect to $\rho$, i.e., for $T,\tilde T \in \cM$,
\begin{align}
    \dLOT^2(T_\sharp \rho, \tilde T_\sharp \rho) \defeq \|\tilde T - T\|^2_{L^2(\rho)} = \|\tilde T \circ T^{-1} - \text{id}\|^2_{L^2(T_\sharp \rho)} = W_2^2(T_\sharp \rho,\tilde T_\sharp \rho)\,, 
\end{align}
where we applied compatibility in the last equality to argue that $\tilde T \circ T^{-1}$ is the optimal transport map from $T_\sharp \rho$ to $\tilde T_\sharp \rho$.
This equality shows that for compatible $\cM$, the geometry of $\coneM_\sharp \rho$ is in a sense trivial, being isometric to a convex subset of the Hilbert space $L^2(\rho)$.
This fundamental property lies at the heart of the widespread usage of one-dimensional optimal transport in applications, see \citet{wang2013linear,basu2014detecting,kolouri2015transport,kolouri2016continuous,park2018representing,cai2020linearized,khurana2023supervised} for applications.

Next, we consider a family of the form $\cone(\cM)$, where $\cM$ is finite.
By its very definition, $\cone(\cM)$ is naturally parameterized by the non-negative orthant.
Henceforth, we write
\begin{align*}
    T^\lambda \defeq \sum_{T \in \cM} \lambda_T T\,, \qquad \mu_\lambda \defeq (T^\lambda)_\sharp \rho\,.
\end{align*}
We can therefore consider the induced metric on $\R_+^{|\cM|}$.
A straightforward calculation reveals:
\begin{align*}
    \dLOT^2(\mu_\eta, \mu_\lambda) = \|\textstyle\sum_{T\in\cM} (\eta_T - \lambda_T)\,T\|^2_{L^2(\rho)} = (\eta - \lambda)^\top Q(\eta-\lambda) = \|\eta - \lambda\|_Q^2\,,
\end{align*}
where the matrix $Q$ has entries $Q_{T,\tilde T} \defeq \langle T,\tilde T \rangle_{L^2(\rho)}$ for $T,\tilde T \in \cM$.
Here, $Q$ is nothing more than a Gram matrix, which is always positive semi-definite. 
This collection of observations proves the following result.

\begin{theorem}\label{thm: isometry}
Let $\cM$ be a finite family of optimal transport maps {with $Q$ defined as the Gram matrix with entries $Q_{T,\tilde T} = \langle T,\tilde T \rangle_{L^2(\rho)}$ for $T,\tilde T \in \cM$}.
Then, $(\R^{|\cM|}_+,\|\cdot\|_Q)$ is always isometric to $(\coneM_\sharp\rho, \dLOT)$. If, in addition, $\cM$ is a compatible family (i.e., $\coneM_\sharp \rho$ is polyhedral), then $(\R^{|\cM|}_+,\|\cdot\|_Q)$ is isometric to $(\coneM_\sharp\rho, W_2)$.
\end{theorem}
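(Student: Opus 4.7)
The plan is to exhibit the explicit parameterization $\Phi : \R_+^{|\cM|} \to \coneM_\sharp \rho$, $\lambda \mapsto \mu_\lambda = (T^\lambda)_\sharp \rho$, and verify that it is distance-preserving for both claims; surjectivity of $\Phi$ is immediate from the definition of $\coneM$, so the content is entirely in computing distances.

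For the first (unconditional) claim, I would start from the very definition $\dLOT^2(\mu_\eta, \mu_\lambda) = \|T^\eta - T^\lambda\|_{L^2(\rho)}^2$ and expand using bilinearity of the $L^2(\rho)$ inner product:
\begin{align*}
\|T^\eta - T^\lambda\|_{L^2(\rho)}^2 = \sum_{T,\tilde T \in \cM} (\eta_T - \lambda_T)(\eta_{\tilde T} - \lambda_{\tilde T})\,\langle T, \tilde T\rangle_{L^2(\rho)} = (\eta-\lambda)^\top Q (\eta - \lambda)\,.
\end{align*}
This is exactly $\|\eta - \lambda\|_Q^2$, giving the first isometry (in the pseudometric sense if $Q$ happens to be singular, which I would flag but not belabor).

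For the second claim, I would invoke \Cref{lem: cones} to upgrade compatibility of $\cM$ to compatibility of $\coneM$, so that any two elements $T^\eta, T^\lambda \in \coneM$ satisfy $T^\eta \circ (T^\lambda)^{-1} = \nabla \psi$ for some convex $\psi$. Hence $T^\eta \circ (T^\lambda)^{-1}$ is an optimal transport map from $\mu_\lambda$ to $\mu_\eta$ by Brenier's theorem, so
\begin{align*}
W_2^2(\mu_\eta, \mu_\lambda) = \|T^\eta \circ (T^\lambda)^{-1} - \id\|_{L^2(\mu_\lambda)}^2 = \|T^\eta - T^\lambda\|_{L^2(\rho)}^2 = \|\eta - \lambda\|_Q^2\,,
\end{align*}
where the middle equality is the change of variables $y = T^\lambda(x)$ under $\mu_\lambda = (T^\lambda)_\sharp \rho$. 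Combined with the first part, this yields the Wasserstein isometry.

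I do not anticipate any serious obstacle; the only delicate point is ensuring that $T^\eta \circ (T^\lambda)^{-1}$ is actually defined and is the optimal transport map, which is handled cleanly by \Cref{lem: cones} together with Brenier's theorem (and justifies why all the work to build compatible cones in the previous subsection is what makes the identification with Euclidean geometry genuine, rather than merely a linearization).
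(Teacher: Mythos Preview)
Your proposal is correct and follows essentially the same approach as the paper: the paper presents exactly these two computations (the bilinear expansion of $\dLOT$ into $\|\eta-\lambda\|_Q^2$, and the compatibility-plus-Brenier argument that $W_2 = \dLOT$) in the paragraphs immediately preceding the theorem statement, concluding with ``This collection of observations proves the following result.'' If anything, your write-up is slightly more explicit in invoking \Cref{lem: cones} to pass from compatibility of $\cM$ to compatibility of $\coneM$, which the paper leaves implicit.
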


As we develop in the next section,~\Cref{thm: isometry} paves the way for the application of scalable first-order Euclidean optimization algorithms for minimization problems over polyhedral subsets of the Wasserstein space.

\section{Polyhedral optimization in the Wasserstein space}\label{sec: algorithms}

Let $\coneM_\sharp \rho$ be polyhedral and recall the Gram matrix $Q$ from \cref{thm: isometry}, with entries given by $Q_{T,\tilde T} = \langle T, \tilde T \rangle_{L^2(\rho)}$.
We now turn toward the problem of minimizing a functional $\cF$ over $\coneM_\sharp\rho$.
Henceforth, we assume that $Q$ is positive definite, so that $Q^{-1}$ exists.
The positive definiteness of $Q$ follows if the maps $T\in \cM$ are linearly independent in $L^2(\rho)$.

\subsection{Continuous-time gradient flow}

The isometry of~\Cref{sec: isometry} implies that the constrained Wasserstein gradient flow of $\cF$ is equivalent to the gradient flow of the functional $\lambda \mapsto \cF(\mu_\lambda)$ with respect to the $Q$-geometry.\footnote{{See \citet[\S 4.2.1]{nesterov2018lectures} for a thorough discussion on optimization over general Euclidean spaces.}}
The latter gradient flow can be written explicitly as
\begin{align}\label{eq: lbd_flow}
    \dot{\lambda}(t) = -Q^{-1}\,\nabla_\lambda \cF(\mu_{\lambda(t)})\,.
\end{align}
Then, geodesic strong convexity over $\WW$ translates to strong convexity of $\lambda \mapsto \cF(\mu_\lambda)$ over $(\R_+^{|\cM|}, \|\cdot\|_Q)$ for free.
The following theorem\footnote{{In the case where we further constrain the gradient flow to lie in a convex set,~\eqref{eq: lbd_flow} should be replaced by a differential inclusion. Since this is not relevant to the subsequent developments, we omit a fuller discussion of this point.}} establishes convergence rates for this continuous-time flow; see \citet[Appendix D]{lambert2022variational} for a proof.

\begin{theorem}\label{thm: continuous_time}
Suppose $\cF$ is geodesically $m$-strongly convex over $\WW$, for $m \geq 0$. Let $\coneM_\sharp \rho$ be polyhedral.
Then, $\cF$ is geodesically $m$-strongly convex over $\coneM_\sharp \rho$. Moreover, if $\mu_\star \equiv \mu_{\lambda^\star} \in \coneM_\sharp \rho$ is a minimizer of $\cF$ over $\coneM_\sharp \rho$, the following convergence rates hold for the gradient flow~\eqref{eq: lbd_flow}.
\begin{enumerate}
    \item If $m = 0$, then $\cF(\mu_{\lambda(t)}) - \cF(\mu_\star) \leq \frac{1}{2t}\,W_2^2(\mu_{\lambda(0)},\mu_\star)$.
    \item If $m > 0$, then:
    \begin{enumerate}
        \item $W_2^2(\mu_{\lambda(t)},\mu_\star) \leq \exp(-2mt)\,W_2^2(\mu_{\lambda(0)},\mu_\star)$.
        \item $\cF(\mu_{\lambda(t)}) - \cF(\mu_\star) \leq \exp(-2mt)\,(\cF(\mu_{\lambda(0)}) - \cF(\mu_\star))$.
    \end{enumerate}
\end{enumerate}
\end{theorem}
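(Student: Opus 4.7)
The plan is to transport the problem through the isometry of \Cref{thm: isometry} into a constrained Euclidean convex optimization problem on $(\R_+^{|\cM|},\norm{\cdot}_Q)$, and then invoke the textbook analysis of gradient flows for (strongly) convex objectives on a Hilbert space. Once the translation is made, all three convergence statements reduce to classical computations.

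The first step is to verify that geodesic strong convexity of $\cF$ on $\WW$ descends to ordinary $m$-strong convexity of $f(\lambda) \defeq \cF(\mu_\lambda)$ in the $Q$-norm. Given $\lambda_0,\lambda_1 \in \R_+^{|\cM|}$, the segment $\lambda_t = (1-t)\lambda_0 + t\lambda_1$ pushes forward via $\lambda \mapsto \mu_\lambda = (T^\lambda)_\sharp\rho$ to $\mu_{\lambda_t} = ((1-t)\,T^{\lambda_0} + t\,T^{\lambda_1})_\sharp\rho$. By compatibility of $\cM$, the map $T^{\lambda_0}$ is the gradient of a convex function (as a nonnegative combination of elements of $\cM$, using \Cref{lem: cones}), hence optimal from $\rho$ to $\mu_{\lambda_0}$; moreover $T^{\lambda_1}\circ(T^{\lambda_0})^{-1}$ is also the gradient of a convex function and hence optimal between $\mu_{\lambda_0}$ and $\mu_{\lambda_1}$. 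Therefore $(\mu_{\lambda_t})_{t\in[0,1]}$ is precisely the constant-speed Wasserstein geodesic, and the Wasserstein $m$-strong convexity of $\cF$ transports verbatim to $Q$-strong convexity of $f$. Since $\coneM_\sharp\rho$ is geodesically convex by \Cref{thm: comp_geodconv}, this already proves the ``moreover'' clause.

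For the flow $\dot\lambda(t) = -Q^{-1}\nabla f(\lambda(t))$, I would next derive the two identities
\begin{align*}
    \tfrac{\rd}{\rd t}\norm{\lambda(t) - \lambda^\star}_Q^2 &= -2\,\langle \nabla f(\lambda(t)),\,\lambda(t) - \lambda^\star\rangle\,,\\
    \tfrac{\rd}{\rd t}f(\lambda(t)) &= -\norm{\nabla f(\lambda(t))}_{Q^{-1}}^2\,.
\end{align*}
When $m > 0$, strong convexity yields the monotonicity bound $\langle \nabla f(\lambda),\,\lambda - \lambda^\star\rangle \geq m\,\norm{\lambda - \lambda^\star}_Q^2$; substituted into the first identity and combined with Gronwall's inequality and the isometry $W_2(\mu_\lambda,\mu_{\lambda^\star}) = \norm{\lambda - \lambda^\star}_Q$, this gives (2a). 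For (2b) I would pair the second identity with the Polyak--{\L}ojasiewicz inequality $f(\lambda) - f(\lambda^\star) \leq \tfrac{1}{2m}\,\norm{\nabla f(\lambda)}_{Q^{-1}}^2$, a standard consequence of $m$-strong convexity in the $Q$-norm, to deduce $\tfrac{\rd}{\rd t}(f(\lambda(t)) - f(\lambda^\star)) \leq -2m\,(f(\lambda(t)) - f(\lambda^\star))$, and again apply Gronwall.

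The sublinear rate (1) is analogous: the second identity shows that $t\mapsto f(\lambda(t))$ is nonincreasing, so $\int_0^t (f(\lambda(s)) - f(\lambda^\star))\dd s \geq t\,(f(\lambda(t)) - f(\lambda^\star))$, and integrating the plain convexity bound $f(\lambda(t)) - f(\lambda^\star) \leq \langle \nabla f(\lambda(t)),\,\lambda(t) - \lambda^\star\rangle = -\tfrac{1}{2}\,\tfrac{\rd}{\rd t}\norm{\lambda(t) - \lambda^\star}_Q^2$ over $[0,t]$ produces the $1/(2t)$ bound. The only genuinely non-routine point is handling the orthant constraint $\lambda \geq 0$: if the flow touches $\partial\R_+^{|\cM|}$ it must be interpreted as a projected flow carrying an added normal-cone term, but because $\lambda^\star \in \R_+^{|\cM|}$ this term contributes nonpositively to the inner products with $\lambda - \lambda^\star$ that drive the argument, so all the above differential inequalities survive unchanged. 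Checking this carefully is the main technical step; everything else is pure Hilbert-space convex optimization, executed as in the Gaussian VI analysis of~\citet{lambert2022variational}.
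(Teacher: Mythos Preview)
Your proposal is correct and matches the paper's approach: the paper does not give its own proof but simply cites \citet[Appendix D]{lambert2022variational} for the standard Hilbert-space gradient-flow analysis, which is exactly what you have sketched (including the same reference). Your explicit handling of the orthant constraint via a normal-cone term is a welcome addition, since the paper's flow~\eqref{eq: lbd_flow} is written without a projection and this point is left implicit there.
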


\subsection{Time-discretization made easy}\label{sec:opt_algs}

Appealing to the isometry in~\Cref{sec: isometry}, optimization of a geodesically convex and geodesically smooth functional $\cF$ over a polyhedral set $\coneM_\sharp\rho$ boils down to a finite-dimensional, convex, smooth, \emph{Euclidean} optimization problem of the form 
\begin{align}\label{eq:constrained_opt}
    \min_{\lambda \in \R^{|\cM|}_+} \ \cF(\mu_\lambda)\,.
\end{align}
More generally, we consider optimization over arbitrary convex subsets $K \subseteq \R^{|\cM|}_+$, and we let $\cK \defeq \{T^\lambda \mid \lambda \in K\}$ denote the corresponding subset of $\coneM$.
It leads to the problem
\begin{align*}
    \min_{\lambda \in K} \ \cF(\mu_\lambda)\,.
\end{align*}
Our consideration of general constraint sets $K$ is not purely for the sake of generality, as we in fact use the full power of polyhedral optimization in our application to mean-field VI (in particular, see~\Cref{thm:higher_order} and~\Cref{app:approx_proofs}).

We consider accelerated projected gradient descent \citep{beck2017first}, as well as stochastic projected gradient descent which is useful when only a stochastic gradient is available (as in~\Cref{sec: sgd_mfvi}).
Moreover, when restricted to any \emph{polytope} in the non-negative orthant, we also consider the projection-free Frank--Wolfe algorithm \citep{frank1956algorithm}. We briefly describe the algorithms and state their corresponding convergence guarantees. Note that we could also port over guarantees for other Euclidean optimization algorithms in a similar manner, but we omit them for brevity.

\subsubsection{Accelerated projected gradient descent}\label{sec:gd}

Starting at an initial point $\lambda^{(0)} \in K$, we can solve~\eqref{eq:constrained_opt} by applying a projected variant of Nesterov's accelerated gradient descent method~\citep{Nes1983Accel}, a well-known extrapolation technique that improves upon the convergence rate for projected gradient descent and is optimal for smooth convex optimization~\citep{NemYud1983Complexity}.
The algorithm is given as~\cref{alg: gradient_descent_cj}.
Here, $\proj_{K,Q}(\cdot)$ is the orthogonal projection operator onto $K$ with respect to the $\|\cdot\|_Q$ norm.

We summarize the following well-known convergence results for accelerated projected gradient descent (APGD) below; see \citet[Chapter 10]{beck2017first} for proofs.

\begin{theorem}[Convergence results for APGD]\label{thm: gd_theorem}
    Let $\coneM_\sharp \rho$ be polyhedral and $\cK\subseteq \coneM$ be convex.
    Suppose that $\cF$ is geodesically $m$-strongly convex and $M$-smooth over $\cK_\sharp \rho$ and let $\mu_\star$ denote a minimizer over this set.
    Let $(\lambda^{(t)} : t = 0,1,2,3\ldots)$ denote the iterates of \cref{alg: gradient_descent_cj}.
    \begin{enumerate}
        \item If $m = 0$, then $            \cF(\mu_{\lambda^{(t)}}) - \cF(\mu_\star) \lesssim M t^{-2}\, W_2^2(\mu_{\lambda^{(0)}},\mu_\star)\,.$
        \item If $m > 0$, then for $\kappa \defeq M/m$,
            \begin{enumerate}
            \item $W_2^2(\mu_{\lambda^{(t)}},\mu_\star) \lesssim \kappa\exp(-t/\sqrt{\kappa})\, W_2^2(\mu_{\lambda^{(0)}},\mu_\star)$.
            \item $\cF(\mu_{\lambda^{(t)}}) - \cF(\mu_\star) \leq {(1 - 1/\sqrt{\kappa})}^t\,\bigl( \cF(\mu_{\lambda^{(0)}}) - \cF(\mu_\star) + \tfrac{m}{2}\,W_2^2(\mu_{\lambda^{(0)}},\mu_\star)\bigr)$.
    \end{enumerate}

    \end{enumerate}
\end{theorem}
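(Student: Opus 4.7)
The plan is to reduce the theorem to the classical convergence analysis of accelerated projected gradient descent (APGD) in a finite-dimensional Hilbert space, via the isometry of~\Cref{thm: isometry}. Writing $f(\lambda) \defeq \cF(\mu_\lambda)$, the constrained problem $\min_{\lambda \in K} f(\lambda)$ is an ordinary smooth convex Euclidean minimization in $(K, \|\cdot\|_Q)$, and~\cref{alg: gradient_descent_cj} is nothing more than Nesterov's accelerated projected gradient method for this problem with respect to the $Q$-geometry (accounting for the $Q^{-1}$ preconditioner in the gradient step and the $Q$-orthogonal projection $\proj_{K,Q}$). The strategy has two steps: (i) transfer the geodesic convexity/smoothness of $\cF$ to ordinary convexity/smoothness of $f$ in the $\|\cdot\|_Q$ norm, and (ii) cite the off-the-shelf APGD convergence rates, translating $\|\cdot\|_Q$ back to $W_2$ via the isometry at the end.

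The key content lies in step (i), which rests on the observation that under compatibility, Wasserstein geodesics in $\cK_\sharp \rho$ coincide with linear interpolations in $\lambda$-space. Given $\lambda_0,\lambda_1 \in K$, compatibility ensures that $T^{\lambda_1} \circ (T^{\lambda_0})^{-1}$ is the optimal transport map from $\mu_{\lambda_0}$ to $\mu_{\lambda_1}$, and a direct computation from~\eqref{eq: constant_speed_geod} then gives
\begin{align*}
    \mu_{\lambda_t} = \bigl((1-t)\,T^{\lambda_0} + t\,T^{\lambda_1}\bigr)_\sharp \rho, \qquad \lambda_t \defeq (1-t)\,\lambda_0 + t\,\lambda_1,
\end{align*}
which is precisely the constant-speed Wasserstein geodesic joining $\mu_{\lambda_0}$ and $\mu_{\lambda_1}$. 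Combined with the identity $W_2^2(\mu_\lambda, \mu_{\lambda'}) = \|\lambda - \lambda'\|_Q^2$ from~\Cref{thm: isometry}, the curve $t \mapsto \cF(\mu_{\lambda_t})$ coincides with $t \mapsto f((1-t)\,\lambda_0 + t\,\lambda_1)$. Hence geodesic $m$-strong convexity and geodesic $M$-smoothness of $\cF$ over $\cK_\sharp \rho$ translate \emph{verbatim} into $m$-strong convexity and $M$-smoothness of $f$ over $K$ in the $\|\cdot\|_Q$ norm.

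Step (ii) is then a direct appeal to a textbook result: for an $m$-strongly convex, $M$-smooth function on a convex subset of a finite-dimensional Hilbert space, APGD enjoys the bounds recalled in~\citet[Chapter 10]{beck2017first}, namely $f(\lambda^{(t)}) - f(\lambda^\star) \lesssim M\,\|\lambda^{(0)} - \lambda^\star\|_Q^2 / t^2$ when $m = 0$, and the accelerated linear rates $(1 - 1/\sqrt\kappa)^t$ for the objective gap together with $\kappa \exp(-t/\sqrt\kappa)$ for the squared iterate distance when $m > 0$. Replacing every occurrence of $\|\cdot\|_Q$ with $W_2$ via~\Cref{thm: isometry} delivers the four stated bounds.

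I expect no genuine obstacle: essentially all of the work is concentrated in step (i), and there it amounts only to verifying that linear segments in $K$ are Wasserstein geodesics under compatibility, which is immediate once one writes out the optimal transport map between $\mu_{\lambda_0}$ and $\mu_{\lambda_1}$. A small point requiring mild care is that $K$ must remain inside $\R^{|\cM|}_+$ so that $\cK \subseteq \coneM$ and the geodesic convexity of $\cK_\sharp \rho$ inherited from~\Cref{thm: comp_geodconv} is at our disposal; this is built into the setup preceding the theorem.
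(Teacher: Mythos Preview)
Your proposal is correct and matches the paper's approach exactly: the paper does not give a standalone proof of this theorem but simply invokes the isometry of~\Cref{thm: isometry} to reduce to a finite-dimensional Euclidean problem and then cites~\citet[Chapter 10]{beck2017first} for the APGD rates. Your step (i), verifying that Wasserstein geodesics in $\cK_\sharp\rho$ correspond to linear segments in $K$ under compatibility, is the one detail the paper leaves implicit (it is essentially the content of~\Cref{thm: comp_geodconv} and the isometry), and you have filled it in correctly.
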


\begin{algorithm}[t]
\caption{Accelerated projected gradient descent over $\coneM$}\label{alg: gradient_descent_cj}
\begin{algorithmic}
\State \textbf{Input:} $\lambda^{(0)} \in K$, functional $\cF$ ($m$-convex and $M$-smooth in $W_2$), compatible family $\cM$
\State Set $\eta^{(0)} = \lambda^{(0)}$, $\kappa \defeq M/m$ if $m > 0$, and $\gamma_{(0)} = 1$ if $m=0$. 
\For{$t = 0, 1, 2, 3, \ldots$}
\State $\lambda^{(t+1)} \gets \proj_{K,Q}(\eta^{(t)} - \tfrac{1}{M}\,Q^{-1}\,\nabla_\lambda\cF(\mu_{\eta^{(t)}}))$ 
\If{$m > 0$}
\State $\eta^{(t+1)} \gets \lambda^{(t+1)} + \frac{\sqrt{\kappa}-1}{\sqrt{\kappa}+1}\,(\lambda^{(t+1)} - \lambda^{(t)})$ 
\Else
\State $\gamma_{(t+1)} \gets \frac{1 + \sqrt{1 + 4\gamma^2_{(t)}}}{2} $
\vspace{1mm}
\State $\eta^{(t+1)} \gets \lambda^{(t+1)}  + \bigl(\frac{\gamma_{(t)}-1}{\gamma_{(t+1)}}\bigr)\,(\lambda^{(t+1)} - \lambda^{(t)})$
\EndIf
\EndFor
\end{algorithmic}
\end{algorithm}

\subsubsection{Stochastic projected gradient descent}

In some situations, the full gradient $\nabla_\lambda\cF(\mu_{\lambda})$ cannot be computed, usually due to high computational costs. Instead, \emph{stochastic} first-order methods alleviate this issue by instead allowing for the use of an unbiased stochastic gradient oracle, written $\hat\nabla_\lambda {\cF}(\mu_{\lambda})$.\footnote{An unbiased estimator of the gradient is one which $\E_{\mu_{\lambda}}[\hat\nabla_\lambda {\cF}(\mu_{\lambda})] = \nabla_\lambda \cF(\mu_{\lambda})$.} The decreased computational overhead has contributed to the widespread use of stochastic gradient methods as a pillar of modern machine learning~\citep{Bub15CvxOpt}.
We limit our discussions to the case where $\cF$ is smooth and strongly convex, as this setting will be the most relevant later. Other settings readily generalize, though we omit them for brevity.

We provide a description of stochastic projected gradient descent (SPGD) in~\cref{alg: spgd_cj}, and convergence analysis in \cref{thm: sgd_theorem} which requires the following standard assumption on the variance of the unbiased estimator:

\begin{enumerate}[label=$(\mathsf{VB})$]
    \item\label{var_bound} 
    There exist constants $c_0, c_1 \geq 0$ such that for any $\lambda \in K$, the gradient estimate satisfies
    \begin{align*}
        \E[\|Q^{-1}\,(\hat\nabla_\lambda {\cF}(\mu_\lambda) - \nabla_\lambda {\cF}(\mu_\lambda))\|_Q^2] \leq c_0 + c_1\, \E[W_2^2(\mu_{\lambda},\mu_\star)]\,.
    \end{align*}
\end{enumerate}
Note that $c_0,c_1$ in \ref{var_bound} will typically depend on the smoothness and strong convexity parameters of $\cF$, and possibly the dimension of the problem.

\begin{theorem}[Convergence results for SPGD]\label{thm: sgd_theorem}
    Let $\coneM_\sharp \rho$ be polyhedral and $\cK\subseteq \coneM$ be convex.
    Suppose that $\cF$ is geodesically $m$-strongly convex and $M$-smooth over $\cK_\sharp \rho$, let $\mu_\star$ denote a minimizer over this set, and suppose that~\ref{var_bound} holds.
    Let $(\lambda^{(t)} : t = 0,1,2,3\ldots)$ denote the iterates of \cref{alg: spgd_cj} {and let $\eps > 0$ be sufficiently small}. If we choose {step size} $h \asymp \frac{m\eps^2}{c_0} \leq \frac{m}{2c_1} \wedge \frac{1}{2 \kappa M}$, and the number of iterations is at least 
    \begin{align*}
        t \gtrsim \frac{c_0}{m^2\eps^2}\log(W_2(\mu_{\lambda^{(0)}},\mu_\star)/\eps)\,,
    \end{align*}
    then $\E[W_2^2(\mu_{\lambda^{(t)}},\mu_\star)] \leq \eps^2$.
\end{theorem}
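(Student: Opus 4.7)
The plan is to lean on the isometry established in \Cref{thm: isometry} to reduce the problem to a finite-dimensional, Euclidean stochastic gradient descent problem, and then to apply a standard contraction argument. Concretely, define $f : K \to \R$ by $f(\lambda) \defeq \cF(\mu_\lambda)$. Because $W_2^2(\mu_\lambda,\mu_\star) = \|\lambda - \lambda^\star\|_Q^2$, the geodesic $m$-strong convexity and $M$-smoothness of $\cF$ on $\cK_\sharp \rho$ transfer verbatim to $m$-strong convexity and $M$-smoothness of $f$ on $K$ in the $\|\cdot\|_Q$ norm. The SPGD update reads $\lambda^{(t+1)} = \proj_{K,Q}(\lambda^{(t)} - h\,Q^{-1}\hat g^{(t)})$ with $\hat g^{(t)} \defeq \hat\nabla_\lambda \cF(\mu_{\lambda^{(t)}})$ an unbiased estimator of $g^{(t)} \defeq \nabla_\lambda \cF(\mu_{\lambda^{(t)}})$, and \ref{var_bound} is exactly a relative-variance assumption in the $Q$-geometry.

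The heart of the argument is a standard one-step recursion. Setting $D_t^2 \defeq \|\lambda^{(t)} - \lambda^\star\|_Q^2$, nonexpansiveness of the $Q$-orthogonal projection yields
\begin{align*}
D_{t+1}^2 \leq D_t^2 - 2h\,\langle \hat g^{(t)}, \lambda^{(t)} - \lambda^\star\rangle + h^2\,\|Q^{-1}\hat g^{(t)}\|_Q^2\,.
\end{align*}
Taking conditional expectation and using unbiasedness, the cross term becomes $-2h\,\langle g^{(t)}, \lambda^{(t)} - \lambda^\star\rangle$, which by $m$-strong convexity is at most $-2h\,[f(\lambda^{(t)}) - f^\star + \tfrac{m}{2}\,D_t^2]$. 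For the squared-norm term, the bias--variance decomposition gives $\E[\|Q^{-1}\hat g^{(t)}\|_Q^2 \mid \lambda^{(t)}] = \|Q^{-1} g^{(t)}\|_Q^2 + \E[\|Q^{-1}(\hat g^{(t)} - g^{(t)})\|_Q^2 \mid \lambda^{(t)}]$; the first piece is bounded by $2M\,(f(\lambda^{(t)}) - f^\star)$ using $M$-smoothness, and the second by $c_0 + c_1\,D_t^2$ from \ref{var_bound}. Combining produces
\begin{align*}
\E[D_{t+1}^2 \mid \lambda^{(t)}] \leq (1 - mh + c_1 h^2)\,D_t^2 + 2h\,(Mh - 1)\,(f(\lambda^{(t)}) - f^\star) + c_0 h^2\,.
\end{align*}

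The step-size conditions are then used to tidy up this recursion. The bound $h \leq 1/(2\kappa M) \leq 1/M$ makes the middle term non-positive, and $h \leq m/(2c_1)$ yields $c_1 h^2 \leq mh/2$, so that $\E D_{t+1}^2 \leq (1 - mh/2)\,\E D_t^2 + c_0 h^2$. Unrolling gives the textbook bias--variance bound
\begin{align*}
\E D_t^2 \leq {(1 - mh/2)}^t D_0^2 + \frac{2 c_0 h}{m}\,.
\end{align*}
Choosing $h \asymp m\eps^2/c_0$ (consistent with the theorem's upper bounds) forces the variance plateau to be $\leq \eps^2/2$, and then $t \gtrsim (mh)^{-1}\log(D_0/\eps) \asymp (c_0/(m^2\eps^2))\,\log(W_2(\mu_{\lambda^{(0)}},\mu_\star)/\eps)$ iterations drive the bias term below $\eps^2/2$. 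I expect the only real bookkeeping hurdle to be matching the step-size thresholds exactly as stated; everything else is standard SGD machinery, made available in the Wasserstein setting by the isometry of \Cref{thm: isometry}.
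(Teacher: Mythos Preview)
Your proposal is correct and follows essentially the same approach as the paper: reduce to a Euclidean SGD problem via the isometry, use nonexpansiveness of the $Q$-projection and strong convexity for the cross term, bound the squared stochastic-gradient norm, and iterate the resulting contraction. The only minor technical difference is that you bound $\|Q^{-1}g^{(t)}\|_Q^2 \leq 2M\,(f(\lambda^{(t)}) - f^\star)$ and cancel it against the $-2h\,(f(\lambda^{(t)})-f^\star)$ term (needing only $h \le 1/M$), whereas the paper bounds $\|Q^{-1}g^{(t)}\|_Q^2 \leq M^2 D_t^2$ via Lipschitzness of the gradient and absorbs $2M^2 h^2$ into the contraction factor (which is where the tighter condition $h \le 1/(2\kappa M)$ is actually used); both routes lead to the same recursion $\E D_{t+1}^2 \leq (1-mh/2)\,\E D_t^2 + O(c_0 h^2)$ and the same conclusion.
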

For completeness, we provide a short proof of \cref{thm: sgd_theorem} in \cref{app: sgd_general}. 

\begin{algorithm}[t]
\caption{Stochastic projected gradient descent over $\coneM$}\label{alg: spgd_cj}
\begin{algorithmic}
\State \textbf{Input:} $\lambda^{(0)} \in K$, functional $\cF$ ($m$-convex and $M$-smooth in $W_2$), compatible family $\cM$, fixed step-size $h > 0$, and unbiased stochastic gradient oracle $\hat \nabla_\lambda{\cF}(\cdot)$.
\For{$t = 0, 1, 2, 3, \ldots$}
\State $\lambda^{(t+1)} \gets \proj_{K,Q}(\lambda^{(t)} - h\,Q^{-1}\,\hat\nabla_\lambda{\cF}(\mu_{\lambda^{(t)}}))$
\EndFor
\end{algorithmic}
\end{algorithm}

\subsubsection{Frank--Wolfe}

In this section, we consider optimization over a \emph{polytope}, i.e., a set of the form
\begin{align*}
    \convM_\sharp\rho \defeq \Bigl\{\bigl(\textstyle\sum_{T\in\cM} \lambda_T T\bigr)_\sharp \rho \Bigm\vert \lambda \in \Delta_{|\cM|}\Bigr\}\,,
\end{align*}
where $\cM$ is a finite family of compatible maps and $\Delta_{|\cM|}$ denotes the $|\cM|$-dimensional simplex.
Note that $\convM \subseteq \coneM$, where $\coneM_\sharp\rho$ is polyhedral, so that $\convM$ is an example of a convex constraint set $\cK$ considered in the previous subsection.
The convergence guarantees for accelerated projection gradient descent in~\Cref{thm: gd_theorem} therefore apply to optimization over $\convM_\sharp \rho$.

In this setting, however, there is a popular alternative to projected gradient descent known as \emph{conditional gradient descent} or the \emph{Frank--Wolfe} (FW) algorithm \citep{frank1956algorithm,jaggi2013revisiting}. In this scheme, we find a descent direction that ensures our iterates remain within the constraint set. This direction $\eta^{(t)}$ is found at each iterate $\lambda^{(t)}$ by solving the following linear sub-problem:
\begin{align}\label{eq: linear_subproblem_new}
    \eta^{(t)} = \argmin_{\eta \in \Delta_{|\cM|}}{\langle \nabla_\lambda \cF(\mu_{\lambda^{(t)}}), \eta - \lambda^{(t)}\rangle}\,.
\end{align}
Finding this direction can be substantially cheaper than the projection step in~\Cref{alg: gradient_descent_cj}.
Indeed, the sub-problem~\eqref{eq: linear_subproblem_new} does not depend on the matrix $Q$.
It is not hard to see that the minimizer $\eta^{(t)}$ must be attained at one of the $|\cM|$ vertices of the simplex.

The full algorithm is presented in~\cref{alg: frank_wolfe_cj}. Known results provide sublinear convergence of the objective gap, which does not improve under strong convexity assumptions; see \citet[Chapter 13]{beck2017first} for proofs and discussions. 

\begin{theorem}[Convergence results for FW]\label{thm: fw_thm}
Suppose that $\cF$ is geodesically convex and $M$-smooth over $\convM_\sharp\rho$, and let $\mu_\star$ be a minimizer of $\cF$ over this set. Let $(\lambda^{(t)} : t = 0,1,2,3\ldots)$ denote the iterates of \cref{alg: frank_wolfe_cj}, with {step size} $\alpha^{(t)} = 2/(t+2)$. Then,
\begin{align}
    \cF(\mu_{\lambda^{(t)}}) - \cF(\mu_\star) \lesssim M t^{-1}\diam\bigl(\convM_\sharp\rho\bigr)^2\,.
\end{align}
\end{theorem}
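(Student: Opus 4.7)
The plan is to reduce the claim to the classical Euclidean Frank--Wolfe analysis by invoking the isometry of~\Cref{thm: isometry}. Define the lifted objective $F : \Delta_{|\cM|} \to \R$ by $F(\lambda) \defeq \cF(\mu_\lambda)$. Under the isometry, the Wasserstein geodesic between any two points of $\convM_\sharp \rho$ corresponds to the straight-line segment between the corresponding parameters in $\Delta_{|\cM|}$ equipped with $\|\cdot\|_Q$. Consequently, geodesic convexity and geodesic $M$-smoothness of $\cF$ over $\convM_\sharp \rho$ translate to ordinary convexity and $M$-smoothness of $F$ on $(\Delta_{|\cM|}, \|\cdot\|_Q)$, and the Wasserstein diameter of $\convM_\sharp \rho$ equals the $\|\cdot\|_Q$-diameter of $\Delta_{|\cM|}$; I denote the latter by $D$. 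Moreover, the linear subproblem~\eqref{eq: linear_subproblem_new} defining the FW direction involves only an inner product with $\nabla_\lambda \cF(\mu_{\lambda^{(t)}})$ and is therefore independent of $Q$, so \Cref{alg: frank_wolfe_cj} coincides exactly with classical FW applied to $F$ on $\Delta_{|\cM|}$ with step sizes $\alpha^{(t)} = 2/(t+2)$.

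Next, I would apply the standard one-step descent argument. By $M$-smoothness of $F$ in $\|\cdot\|_Q$,
\begin{align*}
F(\lambda^{(t+1)}) \leq F(\lambda^{(t)}) + \alpha^{(t)}\,\langle \nabla F(\lambda^{(t)}), \eta^{(t)} - \lambda^{(t)}\rangle + \tfrac{M (\alpha^{(t)})^2}{2}\,\|\eta^{(t)} - \lambda^{(t)}\|_Q^2\,.
\end{align*}
Optimality of $\eta^{(t)}$ in~\eqref{eq: linear_subproblem_new} yields $\langle \nabla F(\lambda^{(t)}), \eta^{(t)} - \lambda^{(t)}\rangle \leq \langle \nabla F(\lambda^{(t)}), \lambda^\star - \lambda^{(t)}\rangle$, and convexity of $F$ further upper bounds this by $F(\lambda^\star) - F(\lambda^{(t)})$. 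Bounding $\|\eta^{(t)} - \lambda^{(t)}\|_Q \leq D$ and writing $h^{(t)} \defeq F(\lambda^{(t)}) - F(\lambda^\star)$, I obtain the recursion
\begin{align*}
h^{(t+1)} \leq (1 - \alpha^{(t)})\,h^{(t)} + \tfrac{M (\alpha^{(t)})^2}{2}\,D^2\,.
\end{align*}
A short induction with $\alpha^{(t)} = 2/(t+2)$ then gives $h^{(t)} \lesssim M D^2 / t$, which is the claimed bound once we recall $D = \diam(\convM_\sharp\rho)$.

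The argument is conceptually routine once the isometry is in hand; the only step requiring care is the correct translation of \emph{Wasserstein} $M$-smoothness and the Wasserstein diameter into their Euclidean counterparts in the $\|\cdot\|_Q$-geometry, since this is precisely what makes the $Q$-matrix disappear from the final bound and allows the off-the-shelf FW recursion to apply. Analogous translations were already used in \Cref{thm: gd_theorem,thm: sgd_theorem}, so the mechanism is identical here.
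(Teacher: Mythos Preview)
Your proposal is correct and follows the same approach as the paper: reduce to Euclidean Frank--Wolfe via the isometry of \Cref{thm: isometry}, then invoke the standard analysis. The paper itself does not write out a proof but simply defers to \citet[Chapter 13]{beck2017first}; you have unpacked that reference with the usual descent-plus-induction argument, which is entirely in line with what the citation contains.
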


\begin{algorithm}[t]
\caption{Frank--Wolfe over $\convM$}\label{alg: frank_wolfe_cj}
\begin{algorithmic}
\State \textbf{Input:} $\lambda^{(0)} \in \Delta_{|\cM|}$, functional $\cF$, and compatible family $\cM$ with $|\cM| = J$.
\For{$t = 0, 1, 2, 3, \ldots$}
\State $j^* \gets \argmin_{j \in [J]}{\langle \nabla_\lambda\cF(\mu_{\lambda^{(t)}}), e_j - \lambda^{(t)}\rangle}$ 
\State $\lambda^{(t+1)} \gets (1-\alpha^{(t)})\, \lambda^{(t)} + \alpha^{(t)}\, e_{j^*} $ \Comment{$\alpha^{(t)}=\tfrac{2}{t+2}$ is a standard step size choice}
\EndFor
\end{algorithmic}
\end{algorithm}

Via the isometry in~\Cref{sec: isometry}, $\diam(\convM_\sharp\rho)$ equals the diameter of $\convM$ in the $Q$-norm.
In terms of the matrix $Q$, this is at most $2\max_{T\in\cM}\sqrt{Q_{T,T}}$.

\begin{remark}
We are not the first to consider applying FW over the Wasserstein space. \cite{kent2021frank} use FW to optimize functionals over the constraint set $\{W_2(\cdot,\pi)\leq \delta\}$ for some $\delta > 0$ and some fixed probability measure $\pi$. In their work, the optimization truly occurs in an infinite-dimensional space. The authors prove various discrete-time rates of convergence under noisy gradient oracles and H{\"o}lder smoothness of the objective function, among other general properties. The core difference between our works is the constraint set of interest, resulting in our algorithm being simpler. Indeed, our setup is purely parametric.
\end{remark}

\subsection{Enriching the family of compatible maps}\label{sec:enrich}

When applying our polyhedral optimization framework to specific problems of interest, it is sometimes useful to first enrich the compatible family. For example, one notable advantage of doing so is that it increases the expressive power of the constraint set.
Another example is that for our application to mean-field VI in~\Cref{sec: mfvi_main}, it will be necessary for us to ensure a uniform lower bound on the Jacobian derivatives of the maps in our family (i.e., they are gradients of \emph{strongly convex} potentials).

The second issue can be addressed by adding $\alpha \id$ to each member of the family.
Indeed, by~\Cref{lem:cones} and~\Cref{lem:add_id}, $\cone(\cM \cup \{\id\})$ is a compatible family, and then we can restrict to the convex subset $\cK \subseteq \cone(\cM \cup \{\id\})$ corresponding to $\lambda$ for which the coefficient $\lambda_{\id}$ in front of $\id$ is $\alpha$. 
The guarantees of~\Cref{sec:gd} then apply directly to optimization over $\cK_\sharp \rho$.
However, we prefer to handle the $\alpha\id$ term separately, and so we define the \emph{cone generated by $\cM$ with tip $\alpha\id$} to be the family
\begin{align*}
    \cone(\cM;\, \alpha \id)
    &\defeq \alpha \id + \coneM\,.
\end{align*}

Similarly, to address the first issue, we would like to enrich our compatible family by adding translations, via~\Cref{lem:translations}. To this end, we define our  \emph{augmented cone}, $\augconeM$ for short, to be
\begin{align*}
    \augconeM
    &\defeq \Bigl\{\textstyle\sum_{T\in\cM} \lambda_T T + v \Bigm\vert \lambda \in \R_+^{|\cM|}, \; v\in\R^d\Bigr\}\,.
\end{align*}
Similarly, we define
\begin{align*}
    \augtip
    &\defeq \alpha \,{\id} + \augconeM\,.
\end{align*}
The augmented cone is parameterized by $(\lambda,v) \in \R_+^{|\cM|} \times \R^d$.
We may assume that each of the maps $T\in \cM$ has mean zero under $\rho$, since this does not affect the augmented cone.
Under this assumption, it is easy to see (c.f.\ the proof of~\autoref{prop: kl_smooth}) that we still obtain an isometry with a Euclidean metric: $W_2^2(\mu_{\eta,u}, \mu_{\lambda,v}) = \|\eta-\lambda\|_Q^2 + \|u-v\|^2$.
In this setting, the first-order algorithms must be modified to compute the gradient and projection steps with respect to this metric.

\begin{remark} [Broader impact of our framework]
{{We now pause to briefly discuss the broader impact of polyhedral sets. We want to stress that, even without compatibility, our framework can be used to optimize functionals over any convex subset of the tangent space, provided that the functional is convex with respect to the \emph{linearized} optimal transport distance.
In turn, this is equivalent to requiring that the functional is convex along generalized geodesics~\citep[see][\S 9.2]{ambrosio2005gradient}, which is typically the case when the functional is convex in the Wasserstein geometry; for example, it holds for the KL divergence with respect to a log-concave measure.
This substantially expands the scope of applications as it allows for optimization over \emph{any} convex subset of the tangent space, not just compatible ones.}}
\end{remark}

\section{Application to mean-field variational inference}\label{sec: mfvi_main}

As our main application of polyhedral optimization over the Wasserstein space, we turn to variational inference (VI) \citep{blei2017variational}. In this framework, we are given access to an unnormalized probability measure, known as the posterior, written $\pi\propto \exp(-V)$, from which we wish to obtain samples for downstream tasks. In principle, one can draw samples from $\pi$ via Markov chain Monte Carlo methods, but these have computational drawbacks, such as potentially long burn-in times. Instead, VI suggests to minimize the Kullback--Leibler (KL) divergence over a constraint set to obtain a proxy measure that is easy to sample from. Commonly used constraint sets in the literature include the space of non-degenerate Gaussians, location-scale families, mixtures of Gaussians, and the space of product measures. 

For a general constraint set $\cC$, the VI optimization problem reads
\begin{align}\label{eq: vi_general}
    \pi^\star_{\cC} \in \argmin_{\mu \in \cC}\kl{\mu}{\pi} \defeq \argmin_{\mu \in \cC}  \int V \dd \mu + \int \log \mu \dd \mu + \log Z\,,
\end{align}
where $Z$, the unknown normalizing constant of $\pi \propto \exp(-V)$ plays no part in the optimization problem. The following assumption, which will play a crucial role in our analyses in \cref{sec: mfvi_approx} and \cref{sec: mfvi_convergence}, is standard in the literature on log-concave sampling~\citep{ChewiBook}:
\begin{enumerate}[label=$(\mathsf{WC})$]
    \item\label{well_cond} $\pi$ is $\ell_V$-strongly log-concave and $L_V$-log-smooth, i.e., $\ell_V I \preceq \nabla^2 V \preceq L_V I$ for $\ell_V, L_V > 0$.
\end{enumerate}
In brief, we say that $\pi$ is \emph{well-conditioned}.
We denote by $\kappa \defeq L_V/\ell_V$ the condition number.

The following lemma allows us to refer to the unique minimizer of the VI problem, which follows from the strong geodesic convexity of the KL divergence (see the discussions around \cref{prop: kl_convex}).

\begin{lemma}\label{lem: kl_sc}
    Suppose $\cC$ is a geodesically convex subset of $\cP_2(\R^d)$, and suppose that $\pi$ is strongly log-concave. Then, there is a unique minimizer of $\kl{\cdot}{\pi}$ over $\cC$.
\end{lemma}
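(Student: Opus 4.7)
The plan is to reduce the lemma to the geodesic strong convexity of $\kl{\cdot}{\pi}$, which, as the excerpt indicates, is discussed around \cref{prop: kl_convex}. Concretely, I will verify strong geodesic convexity on all of $\cP_2(\R^d)$, then deduce uniqueness (and sketch existence) on the geodesically convex subset $\cC$.

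First, write $\pi \propto \exp(-V)$ with $\ell_V I \preceq \nabla^2 V$ under the strong log-concavity hypothesis, and use the decomposition
\begin{align*}
    \kl{\mu}{\pi} = \int V \dd \mu + \int \log \mu \dd \mu + \log Z\,.
\end{align*}
The second summand, the (negative) differential entropy, is displacement convex by McCann's classical theorem. For the first summand, along any constant-speed Wasserstein geodesic $(\mu_t)_{t \in [0,1]}$ of the form~\eqref{eq: constant_speed_geod} with endpoints $\mu_0,\mu_1$, one has $\mu_t = (\nabla \phi_t)_\sharp \mu_0$ with $\nabla \phi_t = (1-t)\,\mathrm{id} + t\,\nabla\phi^{0\to 1}$, so that
\begin{align*}
    \int V \dd \mu_t = \int V\bigl((1-t)\,x + t\,\nabla \phi^{0\to 1}(x)\bigr) \dd \mu_0(x)\,.
\end{align*}
Since $V$ is $\ell_V$-strongly convex on $\R^d$, the integrand is $\ell_V\,\|x - \nabla\phi^{0\to 1}(x)\|^2$-strongly convex in $t$ pointwise; integrating against $\mu_0$ yields $\ell_V\,W_2^2(\mu_0,\mu_1)$-strong convexity of $t\mapsto \int V \dd \mu_t$. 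The logarithmic normalization is constant in $\mu$. Summing, $\mu\mapsto \kl{\mu}{\pi}$ is $\ell_V$-strongly geodesically convex on $(\cP_2(\R^d),W_2)$.

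Given this strong convexity, uniqueness is immediate: if $\mu_0,\mu_1 \in \cC$ were two distinct minimizers, then by geodesic convexity of $\cC$ the geodesic $(\mu_t)_{t\in[0,1]}$ lies in $\cC$, and the strong convexity inequality at $t = 1/2$ would give $\kl{\mu_{1/2}}{\pi} \leq \frac12\,\kl{\mu_0}{\pi} + \frac12\,\kl{\mu_1}{\pi} - \tfrac{\ell_V}{8}\,W_2^2(\mu_0,\mu_1)$, strictly less than the common minimal value, a contradiction. For existence one combines (i) weak lower semicontinuity of $\kl{\cdot}{\pi}$, (ii) tightness of sublevel sets of $\kl{\cdot}{\pi}$ (a standard consequence of $\ell_V$-strong log-concavity, which forces $\pi$ to have finite second moment and yields a quadratic lower bound on $\kl{\cdot}{\pi}$ via the transportation-cost inequality), and (iii) closedness of $\cC$ (which is implicit in the applications of the lemma); then Prokhorov's theorem extracts a limit point realizing the infimum. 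The main conceptual step is really the geodesic strong convexity above; the only potential subtlety is verifying that weak limits of minimizing sequences remain in $\cC$, which is why the closedness convention for $\cC$ matters.
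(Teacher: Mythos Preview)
Your proposal is correct and follows the same route as the paper: the paper simply asserts that the lemma ``follows from the strong geodesic convexity of the KL divergence'' and points to \cref{prop: kl_convex}, while you spell out both the strong convexity argument (via the displacement convexity of the entropy and $\ell_V$-strong convexity of $V$) and the uniqueness-by-midpoint contradiction. You also go further than the paper by sketching existence and correctly flagging the implicit closedness assumption on $\cC$, which the paper does not address.
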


Despite the widespread use of variational inference in numerous settings~\citep[see for example][]{wainwright2008graphical, blei2017variational}, explicit guarantees have only recently been established for a few constraint families. Recently, \cite{lambert2022variational,diao2023forward} obtained computational guarantees for Gaussian VI by way of constrained Wasserstein gradient flows.
\cite{domke2020provable, domke2023provable, kim2023black} considered VI for location-scale families and provided algorithmic guarantees, though they abstained from the gradient flow formalism. Subsequent work by \citet{yi2023bridging} made this connection precise.

In the sequel, we develop end-to-end computational guarantees for mean-field variational inference. This is done in five stages: 
\begin{enumerate}
    \item Transfer assumptions on the posterior $\pi$, namely \ref{well_cond}, to the mean-field solution $\pi^\star$ (see \cref{prop: mfe_properties} in \Cref{sec: mfvi}).
    \item Use the properties of $\pi^\star$ to obtain regularity properties of the optimal transport map $T^\star$ from the standard Gaussian measure to $\pi^\star$, via Caffarelli's contraction theorem and the Monge{--}Amp\`ere equation (see~\Cref{thm:regularity} in~\Cref{sec:regularity}).
    \item Show that polyhedral sets in the Wasserstein space can approximate mean-field measures arbitrarily well, making use of the regularity properties of the optimal transport map, approximation theory, and Wasserstein calculus (see \cref{thm: pi_closeness}, \cref{thm:higher_order}, and~\Cref{thm:approx_improved} in \Cref{sec: mfvi_approx}).
    \item Provide convergence guarantees for optimizing the KL divergence over these polyhedral sets (see \cref{thm: acc_vi} in \Cref{sec: mfvi_convergence}).
    \item Describe implementation details for our final algorithm (see~\Cref{sec: implementation_details}).
\end{enumerate}

\subsection{Mean-field variational inference}\label{sec: mfvi}
In mean-field VI, the constraint set is the space of product measures over $\Rd$, written $\cP(\R)^{\otimes d}$. Thus, the optimization problem is
\begin{align}\label{eq: obj_mfvi}
    \pi^\star \in \argmin_{\mu \in \cP(\R)^{\otimes d}}\kl{\mu}{\pi}\,,
\end{align}
where, by design, the constraint set forces the minimizers to be of the form
\begin{align}\label{eq: mfvi_min}
    \pi^\star(x_1,\ldots,x_d) = \bigotimes_{i=1}^d \pi^\star_i(x_i)\,.
\end{align}

Mean-field VI has a rich history in the realm of statistical inference; see Section 2.3 in \cite{blei2017variational} for a brief historical introduction. Despite being widely used, computational and statistical guarantees have only recently emerged. A standard algorithm to solve \eqref{eq: obj_mfvi} is Coordinate Ascent VI (CAVI)~\citep[see][Section 2.4]{blei2017variational}, the updates for which can be implemented for certain conjugate models.
Guarantees for CAVI were provided recently in~\citet{bhattacharya2023convergence} under a generalized correlation condition for $\pi$; {see also \cite{arnese2024convergence} and \cite{lavenant2024convergence}}.

More closely related to our work is the use of Wasserstein gradient flows.
The work of \cite{lacker2023independent} connects mean-field VI to constrained Wasserstein gradient flows, providing continuous-time guarantees via projected log-Sobolev inequalities but without a concrete algorithmic implementation; see also \cite{lacker2024mean}.
Also, in the context of a Bayesian latent variable models, convergence guarantees for a Wasserstein gradient flow under a well-conditioned assumption at the population level was established by~\citet{yao2022mean}.
Toward the issue of implementation, they suggested two strategies based on particle approximation combined with either Langevin sampling or optimization over transport maps respectively, but they did not analyze the error arising from the particle approximation.
\cite{zhang2020theoretical} study the theoretical and computational properties of mean-field variational inference in the context of community detection. Despite the promising nature of these works, implementation remains a challenge in complete generality.
\looseness-1

\paragraph*{Mean-field equations.} Via calculus of variations, one can readily derive the following system of \emph{mean-field equations} from~\eqref{eq: obj_mfvi}: for $i \in [d]$,
\begin{align}\label{eq: mf_equations}
    \pi^\star_i(x_i) \propto  \exp\Bigl(- \int_{\R^{d-1}}V(x_1,\ldots,x_d)\, \bigotimes_{j\neq i} \pi^\star_j(\!\dd x_j)\Bigr)\,.
\end{align}
These are also sometimes called \emph{self-consistency equations}; we give a derivation in~\Cref{app:mf_eq_pf}.
From the structure of $\pi^\star$, we can prove the following result.

\begin{proposition}\label{prop: mfe_properties}
Suppose that $\pi$ is well-conditioned~\ref{well_cond}. Then,~\eqref{eq: obj_mfvi} admits a \emph{unique} minimizer of the form \eqref{eq: mfvi_min}, where each $\pi^\star_i$ is well-conditioned~\ref{well_cond} with the same parameters $\ell_V$, $L_V$ as $\pi$.
\end{proposition}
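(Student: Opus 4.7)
The plan is to establish the proposition in three stages: (i) existence and uniqueness of a minimizer to~\eqref{eq: obj_mfvi}, (ii) the product form~\eqref{eq: mfvi_min}, which is baked into the constraint set, and (iii) the transfer of the conditioning parameters of $\pi$ to each marginal $\pi^\star_i$ via direct differentiation of the mean-field equations~\eqref{eq: mf_equations}.

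For step (i), I would apply~\cref{lem: kl_sc} with $\cC = \cP(\R)^{\otimes d}$, which requires verifying geodesic convexity of the set of product measures. The key observation is that optimal transport between product measures tensorizes: if $\mu = \bigotimes_i \mu_i$ and $\nu = \bigotimes_i \nu_i$, then the Brenier potential between $\mu$ and $\nu$ splits as $\phi(x) = \sum_i \phi_i(x_i)$ where $\nabla \phi_i$ is the one-dimensional optimal map from $\mu_i$ to $\nu_i$. Consequently, the displacement interpolation~\eqref{eq: constant_speed_geod} between $\mu$ and $\nu$ remains a product measure at every time. Combined with $\ell_V$-geodesic strong convexity of $\kl{\cdot}{\pi}$ (from~\ref{well_cond}), \cref{lem: kl_sc} yields uniqueness, while existence follows because sublevel sets of a strongly geodesically convex functional are $W_2$-compact and $\cP(\R)^{\otimes d}$ is $W_2$-closed (marginals are continuous under $W_2$-convergence). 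Step (ii) is then immediate from the definition of the constraint set.

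For step (iii), rewrite~\eqref{eq: mf_equations} as $\pi^\star_i \propto \exp(-V^\star_i)$ with
\[
    V^\star_i(x_i) \defeq \int_{\R^{d-1}} V(x_1,\dotsc,x_d) \bigotimes_{j\neq i} \pi^\star_j(\!\dd x_j)\,.
\]
Differentiating twice under the integral (justified by the at-most-quadratic growth of $V$ that follows from $\nabla^2 V \preceq L_V I$, together with the finite second moments of the $\pi^\star_j$ guaranteed by their strong log-concavity) gives
\[
    (V^\star_i)''(x_i) = \int_{\R^{d-1}} \partial^2_{x_i x_i} V(x_1,\dotsc,x_d) \bigotimes_{j\neq i} \pi^\star_j(\!\dd x_j)\,.
\]
The eigenvalue bounds $\ell_V I \preceq \nabla^2 V(x) \preceq L_V I$ imply $\ell_V \leq \partial^2_{x_i x_i} V(x) \leq L_V$ pointwise in $x$, and averaging against the product measure preserves these bounds, yielding $\ell_V \leq (V^\star_i)''(x_i) \leq L_V$. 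Hence each $\pi^\star_i$ satisfies~\ref{well_cond} with the same parameters as $\pi$.

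The only genuine obstacle is step (i): pinning down geodesic convexity of $\cP(\R)^{\otimes d}$ and verifying the regularity needed to invoke~\cref{lem: kl_sc}. The conditioning transfer in step (iii) is a one-line computation once differentiation under the integral is justified, which is routine under~\ref{well_cond}.
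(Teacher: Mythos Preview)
Your approach is correct and matches the paper's: uniqueness via \cref{lem: kl_sc} plus geodesic convexity of $\cP(\R)^{\otimes d}$ (the paper cites \citet[Proposition 3.2]{lacker2023independent} for this, whereas you sketch the tensorization argument directly), and conditioning transfer read off from~\eqref{eq: mf_equations}. One small slip in step (iii): you justify differentiation under the integral via ``finite second moments of the $\pi^\star_j$ guaranteed by their strong log-concavity'', but their strong log-concavity is exactly what you are establishing, so this is circular as written; instead, either use that $\pi^\star \in \cP_2(\R^d)$ from your existence argument in step (i), or simply observe that $\partial^2_{x_i x_i} V$ is uniformly bounded by $L_V$, so dominated convergence applies without any moment hypothesis on the $\pi^\star_j$.
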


Uniqueness of the minimizer follows as a corollary of \cref{lem: kl_sc} and \citet[Proposition 3.2]{lacker2023independent}, which shows that $\cP(\R)^{\otimes d}$ is a geodesically convex subset of the Wasserstein space, and the individual $\pi^\star_i$ measures being well-conditioned is immediate from~\eqref{eq: mf_equations}.

\paragraph*{Our approach.}
We approach solving mean-field VI by optimizing over a suitably rich family of compatible maps. To this end, we want to relate~\eqref{eq: obj_mfvi} to 
\begin{align}\label{eq: vi_cm}
    \pi_\diamond^\star \defeq (T_\diamond^\star)_\sharp\rho \in \argmin_{\mu\in\Pdiam} \kl{\mu}{\pi}\,,
\end{align}
where $\Pdiam$ is a polyhedral subset of the Wasserstein space (\Cref{defn:polyhedral}).
Recall that polyhedral subsets of the Wasserstein space are geodesically convex (see~\Cref{thm: comp_geodconv}).
Combined with \cref{lem: kl_sc}, the following corollary is immediate.

\begin{corollary}
    Suppose that $\Pdiam$ is a polyhedral subset of the Wasserstein space, and that $\pi$ is well-conditioned~\ref{well_cond}. Then the minimizer to~\eqref{eq: vi_cm} is \emph{unique}, denoted by $\pi_\diamond^\star$.
\end{corollary}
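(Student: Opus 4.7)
The plan is to observe that the corollary is essentially a direct composition of two facts already available in the excerpt, so almost no new work is required; the only thing to check is that the hypotheses of \Cref{lem: kl_sc} are met by $\Pdiam$ under assumption~\ref{well_cond}.

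First, I would invoke \Cref{thm: comp_geodconv} to conclude that $\Pdiam$ is a geodesically convex subset of $\cP_2(\R^d)$. By \Cref{defn:polyhedral}, $\Pdiam$ is of the form $\coneM_\sharp \rho$ (or more generally $\cK_\sharp\rho$ for some convex $\cK\subseteq \coneM$) where $\cM$ is a compatible family, and \Cref{thm: comp_geodconv} explicitly states that both $\coneM_\sharp \rho$ and $\cK_\sharp \rho$ for convex $\cK\subseteq \coneM$ are geodesically convex. So geodesic convexity of $\Pdiam$ is immediate.

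Next, assumption~\ref{well_cond} includes $\ell_V$-strong log-concavity of $\pi$ with $\ell_V > 0$, so $\pi$ is in particular strongly log-concave. This is exactly the hypothesis needed for \Cref{lem: kl_sc}, which then delivers existence and uniqueness of the minimizer of $\kl{\cdot}{\pi}$ over $\Pdiam$. We define $\pi^\star_\diamond$ to be this unique minimizer, matching the notation introduced in~\eqref{eq: vi_cm}.

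There is no real obstacle here; the entire content is bookkeeping. The one implicit point worth spelling out is that \Cref{lem: kl_sc} guarantees uniqueness via strong geodesic convexity of $\KL(\cdot\!\;\|\;\!\pi)$ along Wasserstein geodesics (under strong log-concavity of $\pi$), and since $\Pdiam$ is closed under those geodesics, the restriction of $\KL(\cdot\!\;\|\;\!\pi)$ to $\Pdiam$ inherits strict convexity along its internal geodesics, ruling out two distinct minimizers.
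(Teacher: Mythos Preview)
Your proposal is correct and matches the paper's approach exactly: the paper states the corollary as immediate from \Cref{thm: comp_geodconv} (geodesic convexity of polyhedral sets) combined with \Cref{lem: kl_sc} (unique KL minimizer over geodesically convex sets when $\pi$ is strongly log-concave).
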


Borrowing inspiration from the existing literature combining normalizing flows and optimal transport~\citep{chen2018neural, finlay2020learning, finlay2020train, Huaetal21CvxFlows}, our goal is to transfer the difficulty of estimating the measure $\pi^\star$ to estimating an appropriate optimal transport map. Indeed, $\Pdiam$ is a collection of pushforwards of a base measure $\rho$ via optimal transport maps. In this work, we will provide a systematic way of choosing both $\rho$, the base measure, and $\cM$, the set of optimal transport maps which generates $\Pdiam$.

A natural candidate for the base density $\rho$ is the standard Gaussian distribution in $\R^d$.
Beyond its naturality, this choice is justified by powerful
regularity results, described in the next section, for the optimal transport map $T^\star$ from $\rho$ to the mean-field solution $\pi^\star$.
This regularity result, in turn, will feed into the approximation theory of~\Cref{sec: mfvi_approx}.

\subsection{Regularity of optimal transport maps between well-cond\-itioned product measures}\label{sec:regularity}

In this section, we study the regularity of the optimal transport map from the standard Gaussian to the mean-field solution $\pi^\star$.
More generally, our regularity bounds hold for the optimal transport map from the Gaussian to any well-conditioned product measure, or between any two well-conditioned product measures $\mu$ and $\nu$ (either by writing $T^{\mu\to\nu}$ as $T^{\rho \to \nu} \circ (T^{\rho\to\mu})^{-1}$ and directly applying the results of this section, or by repeating the arguments thereof).

\begin{theorem}\label{thm:regularity}
    Let $\rho = \cN(0,I)$ and suppose that $\pi$ is well-conditioned~\ref{well_cond}. Then, there exists a unique, coordinate-wise separable optimal transport map from $\rho$ to $\pi^\star$, the minimizer to \eqref{eq: obj_mfvi}, written $T^\star(x) = (T^\star_1(x_1),\ldots,T^\star_d(x_d))$. Each map $T^\star_i$ satisfies
    \begin{align*}
        \sqrt{1/L_V} \leq (T^\star_i)'\leq \sqrt{1/\ell_V}\,.
    \end{align*}
    Moreover, we have the higher-order regularity bounds
    \begin{align}\label{eq:higher_regularity}
        |(T^\star_i)''(x)|
        &\lesssim \frac{\kappa}{\sqrt{\ell_V}}\,(1+|x|)\,,\qquad\text{and}\qquad
        |(T^\star_i)'''(x)|
        \lesssim \frac{\kappa^2}{\sqrt{\ell_V}}\,(1 + |x|^2)\,.
    \end{align}
\end{theorem}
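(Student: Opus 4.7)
The strategy is to exploit the product structure of both source and target to reduce the claim to one-dimensional estimates, then combine Caffarelli's contraction theorem with direct manipulation of the one-dimensional Monge--Amp\`ere relation. First, by \Cref{prop: mfe_properties}, the mean-field minimizer factorizes as $\pi^\star = \bigotimes_{i=1}^d \pi^\star_i$, where each $\pi^\star_i$ is well-conditioned with the same constants $\ell_V$ and $L_V$. Since $\rho = \cN(0,I)$ is itself a product of standard Gaussians and the componentwise product of the scalar Brenier maps $T^\star(x) \defeq (T^\star_1(x_1),\ldots,T^\star_d(x_d))$ is the gradient of $\sum_i \phi_i(x_i)$ (which is convex) and pushes $\rho$ to $\pi^\star$, Brenier's uniqueness identifies $T^\star$ as the unique optimal transport map. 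This reduces the theorem to three scalar estimates on each $T^\star_i$.

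For the upper bound $(T^\star_i)' \leq 1/\sqrt{\ell_V}$, I would simply invoke Caffarelli's contraction theorem, since the source is standard Gaussian and $\pi^\star_i$ is $\ell_V$-strongly log-concave. For the matching lower bound $(T^\star_i)' \geq 1/\sqrt{L_V}$, the cleanest route is via the one-dimensional Monge--Amp\`ere equation
\begin{align*}
    f(x) \defeq (T^\star_i)'(x) = \frac{\rho(x)}{\pi^\star_i(T^\star_i(x))}\,.
\end{align*}
Writing $\pi^\star_i \propto e^{-V_i}$ and taking logarithms yields $\log f(x) = -x^2/2 + V_i(T^\star_i(x)) + \mathrm{const}$, so that
\begin{align*}
    (\log f)'(x) &= -x + V_i'(T^\star_i(x))\,f(x)\,, \\
    (\log f)''(x) &= -1 + V_i''(T^\star_i(x))\,f(x)^2 + V_i'(T^\star_i(x))\,f'(x)\,.
\end{align*}
At an interior minimum $x_\star$ of $\log f$ we have $f'(x_\star) = 0$ and $(\log f)''(x_\star)\geq 0$, which forces $f(x_\star)^2 \geq 1/V_i''(T^\star_i(x_\star)) \geq 1/L_V$; the symmetric argument at an interior maximum recovers (without Caffarelli) the bound $f \leq 1/\sqrt{\ell_V}$. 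A routine check on the behavior of $f$ as $|x|\to\infty$ using the same relation rules out degeneracy at infinity.

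For the higher-order estimates \eqref{eq:higher_regularity}, I would differentiate the Monge--Amp\`ere relation further, obtaining the closed forms $(T^\star_i)''(x) = f(x)\,(-x + V_i'(T^\star_i(x))\,f(x))$ and an analogous expression for $(T^\star_i)'''(x)$ involving only $V_i'$ and $V_i''$ (so that no regularity of $V_i$ beyond the second derivative is required). The only nontrivial step, and the main obstacle, is controlling $|V_i'(T^\star_i(x))|$ at large $|x|$. For this I would use $V_i'(y_0)=0$ at the mode $y_0$ of $\pi^\star_i$, the Lipschitz bound $|T^\star_i(x) - T^\star_i(0)| \leq |x|/\sqrt{\ell_V}$ from Step~2, and the standard fact that for an $\ell_V$-strongly log-concave measure the median $T^\star_i(0)$ lies within $O(1/\sqrt{\ell_V})$ of the mode, which together give $|V_i'(T^\star_i(x))| \lesssim (L_V/\sqrt{\ell_V})\,(1+|x|)$. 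Plugging these bounds together with $\ell_V \leq V_i'' \leq L_V$ into the differentiated expressions and collecting powers of $|x|$ yields the advertised $\kappa$-dependent polynomial growth; everything beyond the anchoring estimate is bookkeeping.
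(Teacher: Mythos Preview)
Your proposal is correct and follows essentially the same route as the paper: reduce to one dimension via the product structure, invoke Caffarelli for the first-derivative bounds, then differentiate the one-dimensional Monge--Amp\`ere relation and control $|V_i'(T_i^\star(x))|$ by anchoring $T_i^\star(0)$ near the mode of $\pi_i^\star$.

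Two minor differences are worth noting. First, for the lower bound $(T_i^\star)'\ge 1/\sqrt{L_V}$ the paper simply cites the two-sided form of Caffarelli's contraction theorem, whereas you give the maximum-principle argument directly; your argument is essentially the one-dimensional proof of Caffarelli's theorem, so there is no real divergence, though your ``routine check at infinity'' does hide the same boundary issue that makes the general theorem nontrivial. Second, for the anchoring step the paper inserts the mean as an intermediary (bounding $|T_i^\star(0)-m_i^\star|$ via the Lipschitz property of $T_i^\star$ and then $|m_i^\star-\tilde m_i^\star|$ via strong log-concavity), while you observe that $T_i^\star(0)$ is the median of $\pi_i^\star$ and bound median-to-mode directly; your route is slightly cleaner since it skips one lemma, but both lead to the same $|V_i'(T_i^\star(x))|\lesssim (L_V/\sqrt{\ell_V})(1+|x|)$ estimate and the remaining bookkeeping is identical.
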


The bounds on $(T^\star_i)'$ in \Cref{thm:regularity} in fact follow immediately from two landmark results in optimal transport, and \cref{prop: mfe_properties}. First, since $\rho$ admits a density, then Brenier's theorem \citep{Bre91} states that there always exists a unique optimal transport map from $\rho$ to any target measure, in this case, $\pi^\star$. Obviously, since both $\rho$ and $\pi^\star$ are product measures, the corresponding optimal transport map is coordinate-wise separable. Then, Caffarelli's contraction theorem \citep{caffarelli2000monotonicity} yields tight lower and upper bounds on the derivatives of each component of $T^\star$ as a function of the strong log-concavity and log-smoothness parameters of $\rho$ and $\pi^\star$. See, e.g.,~\citet[Theorem 4]{ChePoo23Caffarelli} for a precise statement of the contraction theorem, and a short proof based on entropic optimal transport.

On the other hand, we have not seen the bounds~\eqref{eq:higher_regularity} in the literature.
In general, regularity theory for optimal transport is notoriously challenging due to the fully non-linear nature of the associated Monge{--}Amp\`ere PDE\@; see~\citet[Section 4.2.2]{villani2021topics} for an exposition to Caffarelli's celebrated regularity theory.
Here, we can avoid difficult arguments by exploiting the coordinate-wise separability of the transport map and straightforward computations with the Monge{--}Amp{\`e}re equation.
See~\Cref{app:regularity_proofs} for the proof.

The regularity we obtain is essentially optimal, since we started with information on the derivatives of $\pi^\star$ up to order two, and we obtain regularity bounds for the Kantorovich potential (of which $T^\star$ is the gradient) up to order \emph{four}.
Such higher-order regularity bounds are not only useful for obtaining sharper approximation results, but are in fact essential for establishing the key result~\Cref{thm:approx_improved} in~\Cref{sec: mfvi_approx}.

\begin{remark}
In prior works that statistically estimate optimal transport maps on the basis of samples (such as~\citet{deb2021rates,hutter2021minimax,manole2021plugin,pooladian2021entropic,divol2022optimal}), bounds on the Jacobian of the optimal transport map of interest are necessary and standard. In contrast, here these bounds hold as a consequence of our problem setting (in particular, from~\ref{well_cond}). 
\end{remark}

\subsection{Approximating the mean-field solution with compatible maps}\label{sec: mfvi_approx}

So,~\Cref{thm:regularity} tells us that we can view $\pi^\star = (T^\star)_\sharp\rho$, where $T^\star$ obeys desirable regularity properties.
The goal of this section is to demonstrate that we can prescribe a class of maps $\cM$ such that the minimizer of the KL divergence over $\Pdiam \defeq \augtip_\sharp \rho$,
\begin{align*}
    \pi^\star_{\diamond} \in \argmin_{\mu\in\Pdiam}\kl{\mu}{\pi}\,,
\end{align*}
is close to $\pi^\star$ in the Wasserstein distance.
Then,~\Cref{sec: mfvi_convergence} will provide guarantees for computing $\pi^\star_\diamond$ via KL minimization over this set.

The first step is to prove an approximation theorem: there \emph{exists} an element $\hat\pi_\diamond \in \Pdiam$ such that $\pi^\star$ is close to $\hat\pi_\diamond$. We state this as the following general result.
Here, we write $\|D(\bar T- \hat T)\|_{L^2(\rho)}^2$ for the quantity $\int \|D(\bar T - \hat T)\|_{\rm F}^2\dd \rho$.

\begin{theorem}\label{thm: pi_closeness}
    Let $\rho = \cN(0, I)$.
    For any $\eps > 0$, there exists a compatible family $\cM$ of optimal transport maps of size $\widetilde O(\kappa^{1/2} d^{5/4}/\varepsilon^{1/2})$,
    with the following property.
    For any coordinate-wise separable map $\bar T : \R^d\to\R^d$ with Jacobian satisfying the first and second derivative bounds of~\Cref{thm:regularity}, there exists $\hat T \in \augtip$, with $\alpha = 1/\sqrt{L_V}$, such that $W_2( \bar T_\sharp \rho, \hat T_\sharp \rho) = \|\bar T - \hat T\|_{L^2(\rho)} \le \eps/\ell_V^{1/2}$ and $\|D(\bar T - \hat T)\|_{L^2(\rho)} \lesssim \kappa^{1/2} d^{1/4} \varepsilon^{1/2}/\ell_V^{1/2}$.
\end{theorem}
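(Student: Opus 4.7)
The plan is to exploit the coordinate-wise separability of $\bar T$. Writing $\bar T(x) = (\bar T_1(x_1), \dotsc, \bar T_d(x_d))$, each $\bar T_i$ is an increasing univariate map with $\bar T_i' \in [1/\sqrt{L_V}, 1/\sqrt{\ell_V}]$ and $|\bar T_i''(x)| \lesssim \kappa\,(1+|x|)/\sqrt{\ell_V}$ from~\Cref{thm:regularity}. I will build $\cM$ by combining $d$ univariate families of piecewise-linear basis monotone maps, then invoke the direct-sum and cone constructions to obtain the desired polyhedral approximation of $\bar T_\sharp\rho$ via a piecewise-linear interpolant.

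Fix a truncation radius $R \asymp \sqrt{\log(d\kappa/\varepsilon)}$ and uniformly spaced knots $t_0 < \dotsb < t_J$ spanning $[-R,R]$ with spacing $h = 2R/J$, to be chosen. For each coordinate $i$, take
\begin{align*}
    \cM_i \defeq \bigl\{x_i \mapsto (x_i - t_j)_+\bigr\}_{j=0}^J \cup \bigl\{x_i \mapsto -(t_j - x_i)_+\bigr\}_{j=0}^J\,,
\end{align*}
a compatible family of monotone non-decreasing 1D maps by~\Cref{lem: 1d_maps}. Via~\Cref{lem: direct_sum} and~\Cref{rmk:generator_of_direct_sum}, the direct sum (each $\cM_i$ lifted to $\R^d$ by acting only on coordinate $i$) is a compatible family on $\R^d$ with $2d(J+1)$ generators, and its augmented cone with tip $\alpha\,\id$ for $\alpha = 1/\sqrt{L_V}$ contains all coordinate-wise separable maps whose $i$-th component has the form $\alpha x_i + v_i + \sum_j \lambda^+_{i,j}(x_i - t_j)_+ - \sum_j \lambda^-_{i,j}(t_j - x_i)_+$ with $\lambda^\pm_{i,j} \ge 0$ and $v_i \in \R$.

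Define $\hat T$ coordinate-wise by letting $\hat T_i$ be the continuous piecewise linear interpolant of $\bar T_i$ at the knots $\{t_j\}$, extended with constant slopes outside $[-R, R]$. Since $\bar T_i' \ge \alpha$, all slopes of $\hat T_i$ also exceed $\alpha$, so every slope jump $\Delta_j$ at knot $t_j$ can be split as $\Delta_j = \lambda^+_{i,j} - \lambda^-_{i,j}$ with $\lambda^\pm_{i,j} \ge 0$ (choose one to be non-zero at each $j$), placing $\hat T \in \augtip$. Standard interpolation estimates give, on $[-R, R]$,
\begin{align*}
    |\bar T_i(x) - \hat T_i(x)| \lesssim h^2\,\|\bar T_i''\|_{\infty,[-R,R]}\,,\qquad |\bar T_i'(x) - \hat T_i'(x)| \lesssim h\,\|\bar T_i''\|_{\infty,[-R,R]}\,,
\end{align*}
and using $\|\bar T_i''\|_{\infty,[-R,R]} \lesssim \kappa R/\sqrt{\ell_V}$ together with Gaussian tail bounds for $|x| > R$ (negligible for the chosen $R$), summing over $d$ coordinates yields
\begin{align*}
    \|\bar T - \hat T\|_{L^2(\rho)}^2 \lesssim \frac{dh^4\kappa^2 R^2}{\ell_V}\,,\qquad \|D(\bar T - \hat T)\|_{L^2(\rho)}^2 \lesssim \frac{dh^2\kappa^2 R^2}{\ell_V}\,.
\end{align*}
The derivative constraint is the binding one; setting $h \asymp \sqrt{\varepsilon}/(\sqrt{\kappa}\,R\,d^{1/4})$ meets both targets and gives a total family size $2d(J+1) = O(dR/h) = R^2\kappa^{1/2}d^{5/4}/\varepsilon^{1/2} = \widetilde O(\kappa^{1/2}d^{5/4}/\varepsilon^{1/2})$ after absorbing the polylogarithmic $R$ factor.

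The identity $W_2(\bar T_\sharp\rho, \hat T_\sharp\rho) = \|\bar T - \hat T\|_{L^2(\rho)}$ follows from~\Cref{sec: isometry}: both $\bar T$ and $\hat T$ lie in a common compatible family of coordinate-wise non-decreasing maps, so the linearized transport distance coincides with $W_2$. The hard part is the careful balancing: (i) choosing $R$ large enough to absorb Gaussian tails yet small enough that the resulting polylogarithmic factor is safely absorbed into $\widetilde O(\cdot)$, (ii) fitting the value and Jacobian $L^2$ bounds simultaneously despite their different $h$-scalings, and (iii) realizing the piecewise-linear interpolant with non-negative coefficients, which necessitates the two-sided ReLU basis rather than a one-sided one. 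Otherwise, the argument is a routine combination of 1D approximation theory with the regularity bounds of~\Cref{thm:regularity}.
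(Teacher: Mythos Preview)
Your overall strategy matches the paper's proof almost exactly: reduce to one dimension, pick a truncation radius $R$ of order $\sqrt{\log(\cdot)}$, lay down a uniform grid of mesh $h$ (the paper calls it $\delta$), take the piecewise linear interpolant of each $\bar T_i$, and read off the $h^2$ and $h$ errors for the function and its derivative using the second-derivative bound from \Cref{thm:regularity}. The bookkeeping that leads to $|\cM|=\widetilde O(\kappa^{1/2}d^{5/4}/\varepsilon^{1/2})$ is also the same.

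There is, however, a genuine gap in your step (iii). Your two-sided ReLU basis does \emph{not} suffice to realize an arbitrary non-decreasing piecewise linear interpolant with non-negative coefficients. Writing a cone element as $\alpha x+v+\sum_j\lambda^+_j(x-t_j)_+ -\sum_j\lambda^-_j(t_j-x)_+$, the slope on $(t_k,t_{k+1})$ is $\alpha+\sum_{j\le k}\lambda^+_j+\sum_{j\ge k+1}\lambda^-_j$. If the target interior slopes are $s_k=\alpha+\sigma_k$ with $\sigma_k\ge 0$, a short computation shows feasibility forces $\sigma_0\ge\sum_{j=1}^{k}(\sigma_{j-1}-\sigma_j)^+$ for every $k$: the cumulative downward variation of the slope sequence must never exceed $\sigma_0$. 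This fails in simple cases, e.g.\ three interior slopes $(\alpha,\alpha+c,\alpha)$ for any $c>0$. Since $\bar T_i'$ can attain $\alpha$ (or come arbitrarily close to it) while oscillating, your interpolant need not lie in $\augtip$, and the sentence ``placing $\hat T\in\augtip$'' is unjustified.

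The paper sidesteps this by using the \emph{bounded} hat basis $\psi(x)=1\wedge x_+$, so that each $\psi(\delta^{-1}(x-a))$ contributes slope only on a single subinterval. Then the interpolant of $\bar T_i-\alpha\,\id$ is trivially a conic combination: the coefficient on the $k$-th basis element is $\delta\,(s_k-\alpha)\ge 0$, independently for each $k$, and the tail slope is exactly $\alpha$. Replacing your ReLUs by this localized basis repairs the argument with no change to the error estimates or the size of $\cM$.
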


Approximation theory has a large literature which aims at proving uniform rates of approximation over various function classes by linear combinations of well-chosen basis elements. Typical choices of basis functions include polynomials, splines, wavelets, etc., with more recent literature investigating approximations via neural networks. 

While resting on standard techniques, the most important departure of our result from the literature is the coordinate-wise structure of $\bar T$, which allows for approximation rates that do not incur the curse of the dimensionality, in the sense that the cardinality of $|\cM|$ does not depend exponentially on the dimension $d$.
Observe the presence of a structural constraint: in one dimension, the problem essentially boils down to approximating a monotonically increasing function via conic combinations of the generating set $\cM$.

Our construction is described as follows. Let $R > 0$ denote a truncation parameter, and let $\delta > 0$ denote a mesh size. We partition the interval $[-R, +R]$ into sub-intervals of size $\delta$.
Then, $\cM$ consists of all functions of the form $x\mapsto (0,\dotsc,0, \psi(\delta^{-1}\,(x_i-a)), 0,\dotsc,0)$, where only the $i$-th coordinate of the output is non-zero, $I = [a,a+\delta]$ is a sub-interval of size $\delta$, and $\psi : \R\to\R$ is piecewise linear, defined via $\psi(x) \defeq 1 \wedge x_+$.
Proofs are given in~\Cref{app:approx_proofs}.

This piecewise linear construction exploits the smoothness of $\bar T$ up to order two, but no further.
On the other hand, from~\Cref{thm:regularity} we see that $T^\star$ also obeys a bound on its \emph{third} derivative, so we can expect to obtain better approximation rates through a smoother dictionary.
This is indeed the case, but the approximating set becomes more complicated (in particular, it is no longer the pushforward of a pointed cone, but a general polyhedral set), so we defer the details to~\Cref{app:approx_proofs}.

\begin{theorem}\label{thm:higher_order}
    There exists a polyhedral set $\Pdiam$ with an explicit generating family (see~\Cref{app:approx_proofs}) of size $\widetilde O(\kappa^{2/3} d^{7/6}/\varepsilon^{1/3})$ with the following property.
    In the setting of~\Cref{thm: pi_closeness}, assume also that each component $\bar T_i$ of $\bar T$ obeys the third derivative bound in~\Cref{thm:regularity}.
    Then, there exists $\hat T \in \Pdiam$ such that $W_2(\bar T_\sharp \rho, \hat T_\sharp \rho) = \|\bar T - \hat T\|_{L^2(\rho)} \le \varepsilon/\ell_V^{1/2}$ and $\|D(\bar T - \hat T)\|_{L^2(\rho)} \lesssim \kappa^{2/3} d^{1/6} \varepsilon^{2/3}/\ell_V^{1/2}$.
\end{theorem}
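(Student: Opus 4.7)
My approach is to upgrade the piecewise linear construction behind~\Cref{thm: pi_closeness} to a piecewise quadratic one, leveraging the bound $|(\bar T_i)'''(x)| \lesssim \kappa^2/\sqrt{\ell_V}\,(1+x^2)$ from~\Cref{thm:regularity} that was not used in~\Cref{thm: pi_closeness}. Concretely, I partition a truncation interval $[-R,R]$ into $\widetilde O(R/\delta)$ equal sub-intervals of length $\delta$, and on each sub-interval approximate $\bar T_i$ by its second-order Taylor polynomial around the midpoint. The standard Taylor remainder estimate then gives a pointwise function-value error $\lesssim \delta^3 \sup|(\bar T_i)'''|$ and a pointwise derivative error $\lesssim \delta^2 \sup|(\bar T_i)'''|$, an improvement by a factor of $\delta$ in each case over the piecewise linear approximant.

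As in~\Cref{thm: pi_closeness}, I take the truncation radius $R = \widetilde O(1)$ just large enough that the $L^2$ Gaussian tail contribution from outside $[-R,R]^d$ is absorbed into the target error. To ensure that the per-coordinate $L^2$ error is $O(\varepsilon/(\sqrt d\,\sqrt{\ell_V}))$ (which sums in quadrature across coordinates to the desired $\varepsilon/\sqrt{\ell_V}$), I then take $\delta \sim (\varepsilon/(\kappa^2 R^2 \sqrt d))^{1/3}$. The per-coordinate generating family consists of $\widetilde O(R/\delta)$ monotone piecewise quadratic shape functions supported on individual sub-intervals; by direct sum across coordinates (\Cref{lem: direct_sum}), together with the identity tip $\alpha\,\id$ for $\alpha = 1/\sqrt{L_V}$ and augmentation by translations (see~\Cref{sec:enrich}), the total size is $d \cdot \widetilde O(R/\delta) = \widetilde O(\kappa^{2/3} d^{7/6}/\varepsilon^{1/3})$.

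For the Jacobian bound, the per-interval squared $L^2$ error in the derivative is of order $\delta \cdot (\delta^2 \sup|(\bar T_i)'''|)^2 \sim \delta^5 \kappa^4 R^4/\ell_V$; summing over the $R/\delta$ sub-intervals and the $d$ coordinates and taking square roots yields $\|D(\bar T - \hat T)\|_{L^2(\rho)} \lesssim \sqrt d\,\delta^2 \cdot \kappa^2 R^2/\sqrt{\ell_V}$. Substituting the chosen $\delta$ and absorbing polylogarithmic factors of $R$ delivers the target bound $\widetilde O(\kappa^{2/3} d^{1/6} \varepsilon^{2/3}/\sqrt{\ell_V})$.

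The main obstacle I anticipate is realizing the set of admissible $\hat T$ as a genuinely polyhedral (rather than strictly conic) subset of $\coneM_\sharp \rho$ for a compatible family $\cM$. In the piecewise linear case, the clamp shape $\psi(x) = 1 \wedge x_+$ is itself monotone, so any conic combination is automatically a valid univariate transport map; in the piecewise quadratic case, a naive basis of local quadratic bumps loses this property, since conic combinations of quadratic shapes need not remain monotone. I plan to remedy this by taking the per-coordinate generating set to be an I-spline basis of degree two (so that each basis element is monotone, hence compatible by~\Cref{lem: 1d_maps}) and then cutting out the finitely many additional linear inequalities needed to enforce both monotonicity of the aggregate $\hat T_i$ and the local Taylor approximation constraints, producing the polyhedral subset $\Pdiam$ of $\coneM_\sharp \rho$ advertised in the theorem. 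The detailed bookkeeping is deferred to~\Cref{app:approx_proofs}.
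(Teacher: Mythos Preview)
Your numerics are correct: choosing $\delta \asymp (\varepsilon/(\kappa^2 R^2 \sqrt d))^{1/3}$ with $R = \widetilde O(1)$ yields the stated family size and the two $L^2$ bounds. The high-level idea---exploit the third-derivative bound via a degree-two piecewise polynomial approximant---also matches the paper's.

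There is, however, a genuine gap in the construction. The approximant you describe, ``the second-order Taylor polynomial around the midpoint on each sub-interval'', is \emph{discontinuous} at the mesh points: adjacent Taylor polynomials will generically disagree in both value and slope at the shared endpoint. But every element of a polyhedral set $\cK_\sharp\rho$ with $\cK \subseteq \coneM$ is the pushforward by a \emph{continuous monotone} map (a conic combination of your continuous I-spline basis functions plus $\alpha\,\id$ and a shift). So the piecewise-Taylor $\hat T$ you analyze simply does not live in $\Pdiam$, and the ``standard Taylor remainder estimate'' you invoke does not transfer to any element that does. Saying you will later switch to an I-spline basis does not close this gap: the approximation error for an I-spline interpolant is not the midpoint Taylor remainder, and you would need a separate argument (e.g., quasi-interpolant bounds from spline theory) together with an explicit description of the linear inequalities cutting out $\Pdiam$.

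The paper resolves this differently and rather cleanly. Instead of approximating $\bar T_i$ directly, it interpolates the \emph{derivative} $\bar T_i'$ piecewise linearly at the mesh nodes, using a dictionary of linear and piecewise-quadratic basis functions whose derivatives are exactly the hat functions. This yields a globally $C^1$ piecewise-quadratic $\hat T_i$ with $|\bar T_i' - \hat T_i'| \lesssim (\kappa^2 R^2/\sqrt{\ell_V})\,\delta^2$ directly from the third-derivative bound. Integrating does not force $\hat T_i$ to match $\bar T_i$ at the nodes, so the paper then adds small multiples of piecewise-\emph{cubic} shape functions $\psi^{\rm cub,\pm}$ with the key property $(\psi^{\rm cub,\pm})'(0)=(\psi^{\rm cub,\pm})'(1)=0$: these correct the values at nodes without disturbing the derivative interpolation, and their coefficients are $O(\delta^3)$. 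Monotonicity is then enforced by two explicit families of linear inequalities on the coefficients (one on the quadratic part, one bounding the negative cubic coefficients by $\alpha\delta/12$), which defines the polyhedral set $K$ and hence $\Pdiam$. This construction gives you continuity, the polyhedral description, and the error bounds simultaneously, whereas your sketch currently achieves each only in isolation.
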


{
\begin{remark}
We note that the worse dependence on $\kappa$ for the smoother dictionary is 
due to our derivative bounds for the optimal transport map; we have no reason to believe it is fundamental.
\end{remark}
}
The two preceding results show that we can, with prior knowledge of $\pi^\star$, \emph{construct} some $\hat\pi_\diamond \in \Pdiam$ which is close to $\pi^\star$, but it does not guarantee that we can \emph{find} $\hat\pi_\diamond$ easily.
The next result addresses this issue by showing that $\pi^\star$ is close to the \emph{minimizer} $\pi^\star_\diamond$ of the KL divergence over $\Pdiam$, and hence can be computed using the algorithms in~\Cref{sec: mfvi_convergence}. As the proof reveals, establishing this statement is related to a geodesic \emph{smoothness} property for the KL divergence, which is quite non-trivial since the entropy is non-smooth over the full Wasserstein space (see the further discussion in the next section).
We are able to verify this smoothness property on the geodesic connecting $\hat\pi_\diamond$ to $\pi^\star$ using the bounds on $\|D(\hat T_\diamond - T^\star)\|_{L^2(\rho)}$ in our approximation results (\Cref{thm: pi_closeness} and~\Cref{thm:higher_order}). The proof of~\Cref{thm:approx_improved} is also found in~\Cref{app:approx_proofs}.

\begin{theorem}\label{thm:approx_improved}
    The mean-field solution $\pi^\star$ is close to the \emph{minimizer} $\pi^\star_\diamond$ of the KL divergence over $\Pdiam$ with corresponding generating family $\cM$, in the sense that $\sqrt{\ell_V}\,W_2(\pi^\star_\diamond,\pi^\star)\le\eps$, in the following two cases.
    \begin{enumerate}
        \item For the piecewise linear construction of~\Cref{thm: pi_closeness}, the size of the family is bounded by $|\cM| \le \widetilde O(\kappa^2 d^{3/2}/\eps)$.
        \item For the higher-order approximation scheme of~\Cref{thm:higher_order}, the size of the family is bounded by $|\cM| \le \widetilde O(\kappa^{3/2} d^{5/4}/\eps^{1/2})$.
    \end{enumerate}
\end{theorem}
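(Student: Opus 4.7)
The plan is to bound $\sqrt{\ell_V}\,W_2(\pi^\star_\diamond,\pi^\star)$ through a three-step chain that combines the approximations furnished by \Cref{thm: pi_closeness} and \Cref{thm:higher_order} with a quantitative geodesic smoothness bound for the KL divergence. I begin by noting that every generator in $\cM$ acts on a single coordinate of its input, and both the tip $\alpha\,{\id}$ and the translations appearing in $\augtip$ are coordinate-wise; consequently $\Pdiam \subseteq \cP(\R)^{\otimes d}$, so both $\pi^\star_\diamond$ and the approximator $\hat\pi_\diamond$ produced by the approximation theorems are product measures.

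The lower-bound half of the chain is immediate. Since $\pi^\star$ is the unique minimizer of $\kl{\cdot}{\pi}$ over the geodesically convex set $\cP(\R)^{\otimes d}$ \citep{lacker2023independent}, and $\kl{\cdot}{\pi}$ is geodesically $\ell_V$-strongly convex on $\WW$ under~\ref{well_cond}, the first-order optimality of $\pi^\star$ along the geodesic to $\pi^\star_\diamond$ (which stays in $\cP(\R)^{\otimes d}$) combined with strong convexity gives
\[
\tfrac{\ell_V}{2}\,W_2^2(\pi^\star,\pi^\star_\diamond)\;\le\;\kl{\pi^\star_\diamond}{\pi}-\kl{\pi^\star}{\pi}\;\le\;\kl{\hat\pi_\diamond}{\pi}-\kl{\pi^\star}{\pi},
\]
where the last inequality uses that $\pi^\star_\diamond$ minimizes KL over $\Pdiam\ni\hat\pi_\diamond$.

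The key technical step is to upper bound the rightmost KL gap via a smoothness argument along the Wasserstein geodesic $\mu_t = (T_t)_\sharp \rho$, where $T_t \defeq (1-t)\,T^\star + t\,\hat T_\diamond$. The change-of-variables identity $\kl{\mu_t}{\pi} = \int V\circ T_t\dd\rho - \int\log\det DT_t\dd\rho + \mathrm{const.}$ yields, after differentiating twice,
\[
\tfrac{\dd^2}{\dd t^2}\,\kl{\mu_t}{\pi} = \int (\hat T_\diamond-T^\star)^\top\nabla^2 V(T_t)\,(\hat T_\diamond-T^\star)\dd\rho + \int\tr\bigl(((DT_t)^{-1}\,D(\hat T_\diamond-T^\star))^2\bigr)\dd\rho.
\]
The first integral is bounded by $L_V\,W_2^2(\hat\pi_\diamond,\pi^\star)$ via log-smoothness. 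For the second, coordinate-wise separability of both $T^\star$ and $\hat T_\diamond$ diagonalizes every Jacobian, reducing the trace to a sum of squares; and the uniform lower bound $(T_t)_i'\ge 1/\sqrt{L_V}$ — granted by \Cref{thm:regularity} applied to $T^\star$ and by the tip $\alpha = 1/\sqrt{L_V}$ built into $\augtip$ — produces the clean estimate $L_V\,\|D(\hat T_\diamond-T^\star)\|^2_{L^2(\rho)}$. Crucially, the initial velocity $\hat T_\diamond\circ(T^\star)^{-1}-{\id}$ of the geodesic is itself coordinate-wise (as it relates two product measures), hence lies in the tangent space of $\cP(\R)^{\otimes d}$ at $\pi^\star$; the first-order condition encoded in the mean-field equations~\eqref{eq: mf_equations} then annihilates the first-order Taylor term, giving
\[
\kl{\hat\pi_\diamond}{\pi}-\kl{\pi^\star}{\pi}\;\le\;\tfrac{L_V}{2}\,W_2^2(\hat\pi_\diamond,\pi^\star)+\tfrac{L_V}{2}\,\|D(\hat T_\diamond-T^\star)\|^2_{L^2(\rho)}.
\]

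Dividing by $\ell_V$ gives $\ell_V\,W_2^2(\pi^\star,\pi^\star_\diamond)\lesssim \kappa\,\eps_1^2 + \kappa\,\ell_V\,\|D(\hat T_\diamond-T^\star)\|^2_{L^2(\rho)}$, where $\eps_1$ is the approximation parameter. Substituting the Jacobian estimate $\sqrt{\ell_V}\,\|D(\hat T_\diamond-T^\star)\|_{L^2(\rho)}\lesssim \kappa^{1/2}d^{1/4}\eps_1^{1/2}$ from \Cref{thm: pi_closeness} (respectively the sharper exponent $\kappa^{2/3}d^{1/6}\eps_1^{2/3}$ from \Cref{thm:higher_order}), and choosing $\eps_1$ to make the right-hand side at most $\eps^2$, then substituting back into the cardinality of $\cM$, produces the two claimed bounds. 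The main obstacle is the smoothness step, because the entropy is not globally geodesically smooth on $\WW$: the argument hinges on coordinate-wise separability (to diagonalize every Jacobian appearing in the computation) and on the tip $\alpha\,{\id}$ enforced in the definition of $\Pdiam$ (cf.\ \Cref{sec:enrich}) to ensure that $(DT_t)^{-1}$ remains bounded uniformly along the entire geodesic.
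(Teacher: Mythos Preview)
Your proof is correct and follows essentially the same strategy as the paper: reduce to the KL suboptimality gap $\kl{\hat\pi_\diamond}{\pi}-\kl{\pi^\star}{\pi}$ via strong convexity, then bound that gap by computing the Wasserstein Hessian of $\kl{\cdot}{\pi}$ along the geodesic from $\pi^\star$ to $\hat\pi_\diamond$ and using first-order optimality of $\pi^\star$ over product measures to kill the linear term.

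There are two places where your execution differs from the paper's, both in your favor. First, the paper interposes $\hat\pi_\diamond$ via the triangle inequality and then applies strong convexity at $\pi^\star_\diamond$ (the minimizer over $\Pdiam$) to control $W_2(\pi^\star_\diamond,\hat\pi_\diamond)$; you instead apply strong convexity directly at $\pi^\star$ (the minimizer over the larger set $\cP(\R)^{\otimes d}\supseteq\Pdiam$), which is cleaner and avoids the triangle inequality altogether. Second, by parameterizing the geodesic as $((1-t)T^\star+t\hat T_\diamond)_\sharp\rho$ and using the lower bound $(T_t)_i'\ge\alpha$ coming from the tip, your entropy-Hessian estimate is $L_V\,\|D(\hat T_\diamond-T^\star)\|_{L^2(\rho)}^2$; the paper parameterizes by the OT map from $\pi^\star$ and picks up an extra factor of $\kappa$ there, arriving at $\kappa L_V\,\|D(\hat T_\diamond-T^\star)\|_{L^2(\rho)}^2$ (cf.\ \Cref{cor: kl_diamond_approx}). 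Your sharper constant propagates to slightly better $\kappa$-dependence in $|\cM|$ than the theorem actually claims, which of course still suffices. One small caveat: for the higher-order construction of \Cref{thm:higher_order}, the constraint set only guarantees slope $\ge\alpha/2$ rather than $\alpha$ (see the remark after~\eqref{eq:higher_approx_polyhedral}), so your $(T_t)_i'\ge 1/\sqrt{L_V}$ should be $\ge 1/(2\sqrt{L_V})$ in that case---a harmless constant.
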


\subsection{Computational guarantees for mean-field VI}\label{sec: mfvi_convergence}

Having identified polyhedral subsets $\Pdiam$ of the Wasserstein space over which the KL minimizer $\pi^\star_\diamond$ is close to the desired mean-field VI solution $\pi^\star$, we are now in a position to apply our theory of polyhedral optimization and thereby obtain novel computational guarantees for mean-field VI\@.
Recall that $\pi \propto \exp(-V)$, and 
\begin{align}\label{eq: kl_val}
    \kl{\mu}{\pi} = \cV(\mu) + \cH(\mu) \defeq \int V \dd \mu + \int \log \mu\dd\mu +  \log Z\,,
\end{align}
where $Z > 0$ is the normalizing constant of $\pi$.

For concreteness, we focus our discussion on the setting in which $\Pdiam = \augtip_\sharp\rho$, such as in~\Cref{thm: pi_closeness}, although the discussion below can be adapted to more general polyhedral sets.
As in~\Cref{sec:opt_algs}, we apply Euclidean optimization algorithms over the parameterization of $\augtip$; see~\Cref{sec: implementation_details} for a discussion of implementation.

In order to apply the algorithmic guarantees from \cref{sec:opt_algs}, we must verify the strong geodesic convexity and geodesic smoothness of the KL divergence over the set $\Pdiam$.
Strong convexity follows from the celebrated fact that the KL divergence with respect to an $\ell_V$-strongly log-concave measure $\pi$ is $\ell_V$-strongly geodesically convex~\cite[see][Particular Case 23.15]{Vil08}, together with the geodesic convexity of $\Pdiam$ (\Cref{thm: comp_geodconv} and~\cref{sec:enrich}).

\begin{proposition}[Strong convexity of the KL divergence over geodesically convex sets]\label{prop: kl_convex}
    Assume that $\pi$ is well-conditioned~\ref{well_cond}.
    Then, the KL divergence $\kl{\cdot}{\pi}$ is $\ell_V$-strongly geodesically convex over any geodesically convex subset of the Wasserstein space.
\end{proposition}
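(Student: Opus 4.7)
The plan is to decompose the KL divergence as in~\eqref{eq: kl_val} into the potential energy $\cV(\mu) = \int V \dd\mu$ and the entropy $\cH(\mu) = \int \log\mu\dd\mu$ (the constant $\log Z$ is irrelevant for convexity). Along any constant-speed Wasserstein geodesic $(\mu_t)_{t\in[0,1]}$ connecting $\mu_0,\mu_1 \in \cP_2(\R^d)$, I would verify strong convexity of $t \mapsto \kl{\mu_t}{\pi}$ by treating the two pieces separately. Since geodesic convexity is preserved under restriction to any geodesically convex subset $\cC$ of the Wasserstein space, the conclusion over $\cC$ follows immediately once we establish it over the whole space.

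For the potential term, recall that $\mu_t = (\nabla \phi_t)_\sharp \mu_0$ where $\nabla\phi_t = \mathrm{id} + t\,(\nabla\phi^{0\to 1} - \mathrm{id})$ by~\eqref{eq: constant_speed_geod}. Thus $\cV(\mu_t) = \int V\bigl(\nabla\phi_t(x)\bigr) \dd\mu_0(x)$, and since $V$ is $\ell_V$-strongly convex in the Euclidean sense (by~\ref{well_cond}), the integrand is an $\ell_V$-strongly convex function of $t$ at every $x$, with quadratic term $\frac{\ell_V}{2}\,\|\nabla\phi^{0\to1}(x) - x\|^2$. Integrating against $\mu_0$ and recalling that $\int \|\nabla\phi^{0\to1}(x) - x\|^2\dd\mu_0(x) = W_2^2(\mu_0,\mu_1)$ yields that $t \mapsto \cV(\mu_t)$ is $\ell_V$-strongly convex.

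For the entropy term, I would invoke McCann's displacement convexity theorem, which asserts that $\cH$ is geodesically convex on $\cP_2(\R^d)$; see, e.g., \citet[Theorem 5.15]{Vil08}. Summing the two contributions gives that $t\mapsto \kl{\mu_t}{\pi}$ is $\ell_V$-strongly convex, as required. Alternatively, this fact is recorded directly in~\citet[Particular Case 23.15]{Vil08}, which I would cite for brevity.

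There is no real obstacle here: all of the pieces are standard consequences of Otto calculus. The only point worth emphasizing is that restricting the functional to a geodesically convex subset $\cC$ does not affect strong convexity, because the hypothesis on $\cC$ is precisely that every geodesic segment with endpoints in $\cC$ remains in $\cC$, so the one-dimensional strong convexity estimate applies verbatim on such segments.
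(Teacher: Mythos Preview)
Your proposal is correct and matches the paper's approach: the paper simply cites~\citet[Particular Case 23.15]{Vil08} for the $\ell_V$-strong geodesic convexity of $\kl{\cdot}{\pi}$ on the full Wasserstein space and then observes (as you do) that restriction to a geodesically convex subset preserves this property. Your decomposition into potential and entropy terms is exactly the standard argument underlying that citation, so there is no substantive difference.
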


Smoothness of the KL divergence, however, is more subtle, owing to the non-smoothness of the entropy $\cH$ over the full Wasserstein space; see~\citet{diao2023forward} for further discussion of this point.
Prior works therefore established smoothness over restricted subsets of the Wasserstein space~\citep[e.g.,][]{lambert2022variational}, or utilized proximal methods which succeed in the absence of smoothness~\citep[e.g.,][]{diao2023forward}. We adopt the former approach, and for this we require a further property of the family $\cM$ of generating maps.

First, without loss of generality, we may assume that each $T\in\cM$ has mean zero under $\rho$: $\int T \dd\rho = 0$.
Indeed, subtracting the means from the maps in the generating set does not affect $\augtip$, since $\augtip$ is augmented by translations.
Assuming now that $\cM$ is centered, we recall the Gram matrix $Q$ with entries $Q_{T,\tilde T} \defeq \langle T,\tilde T\rangle_{L^2(\rho)}$.
We also form the Gram matrix of the Jacobians, $Q^{(1)}$, with entries $Q^{(1)}_{T,\tilde T} \defeq \langle DT, D\tilde T\rangle_{L^2(\rho)} \defeq \int \langle DT, D\tilde T\rangle\dd\rho$.
Our main assumption on $\cM$ is an upper bound on $Q^{(1)}$ in terms of $Q$. We refer to families $\cM$ satisfying this condition as \emph{regular}.

\begin{enumerate}[label=$(\Upsilon)$]
    \item\label{regular_dict} There exists $\Upsilon > 0$ such that for the Gram matrices associated with a centered family $\cM$, it holds that $Q^{(1)} \preceq \Upsilon Q$.
\end{enumerate}
We remark that when $\cM$ is constructed as a direct sum of univariate families via~\Cref{lem: direct_sum} (c.f.~\Cref{rmk:generator_of_direct_sum}), as in our approximation results (\Cref{sec: mfvi_approx}), the matrices $Q$ and $\tilde Q$ have a $d\times d$ block diagonal structure; see~\Cref{sec: implementation_details} for details.
Consequently, the regularity $\Upsilon$ of the family $\cM$ is the same as the regularity parameter for the \emph{univariate} family used to construct $\cM$, and is therefore nominally ``dimension-free''.\footnote{However, if one wishes to maintain the same quality of approximation in high dimension, our approximation results in~\Cref{sec: mfvi_approx} require taking the size of the univariate family to scale mildly with the dimension, and in this case the parameter $\Upsilon$ may indeed scale with the dimension.}

We can now establish our geodesic smoothness result for the KL divergence over the augmented and pointed cone $\augtip_\sharp \rho$, where $\cM$ is a regular generating family.

\begin{proposition}[Smoothness of the KL divergence over $\augtip_\sharp \rho$]\label{prop: kl_smooth}
    Assume that $\pi$ is well-conditioned~\ref{well_cond} and that $\cM$ is regular~\ref{regular_dict}.
    Then, $\kl{\cdot}{\pi}$ is $M$-geodesically smooth over $\augtip_\sharp \rho$, with smoothness constant bounded by
\begin{align*}
    M \le L_V + \Upsilon/\alpha^2\,.
\end{align*}
\end{proposition}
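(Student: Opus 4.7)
The goal is to bound $\frac{d^2}{dt^2}\kl{\mu_t}{\pi} \le M\,W_2^2(\mu_0,\mu_1)$ along every constant-speed geodesic $(\mu_t)_{t\in[0,1]}$ inside $\augtip_\sharp\rho$, with $M \le L_V + \Upsilon/\alpha^2$. I would split the KL into the potential part $\cV(\mu) = \int V\dd\mu$ and the entropy part $\cH(\mu) = \int \log\mu\dd\mu$, and treat the two separately, since the entropy is the sole source of difficulty (it fails to be smooth on the whole Wasserstein space). The normalizing constant contributes nothing.

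By the extended isometry of \Cref{sec:enrich}, a geodesic in $\augtip_\sharp\rho$ is realized by linear interpolation of parameters: if $\mu_0 = \mu_{\lambda_0,v_0}$ and $\mu_1 = \mu_{\lambda_1,v_1}$, then $\mu_t = (T_t)_\sharp\rho$ with transport map
\begin{align*}
    T_t(x) = \alpha x + \sum_{T\in\cM}(\lambda_t)_T\, T(x) + v_t,\qquad \lambda_t = (1-t)\lambda_0+t\lambda_1,\ v_t = (1-t)v_0 + tv_1,
\end{align*}
so that $T_t$ is affine in $t$ with $\dot T_t = \sum_T \Delta_T T + \Delta_v$ (where $\Delta = \lambda_1-\lambda_0$, $\Delta_v = v_1-v_0$) and $\ddot T_t = 0$. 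Also, the Jacobian $DT_t = \alpha I + \sum_T (\lambda_t)_T DT$ is affine in $t$, satisfies $DT_t \succeq \alpha I$ (each $T\in\cM$ is the gradient of a convex function, and $\lambda_t \in \R_+^{|\cM|}$), and $\dot{(DT_t)} = \sum_T \Delta_T DT$. The squared Wasserstein distance along the geodesic is the squared speed, namely $\|\dot T_t\|_{L^2(\rho)}^2$, which, using that the maps in $\cM$ are centered under $\rho$, collapses the cross term and yields $W_2^2(\mu_0,\mu_1) = \Delta^\top Q\,\Delta + \|\Delta_v\|^2$.

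For the potential part, a direct differentiation of $\cV(\mu_t) = \int V\circ T_t\dd\rho$ and the vanishing of $\ddot T_t$ give $\ddot\cV(\mu_t) = \int \langle \nabla^2 V(T_t)\,\dot T_t,\dot T_t\rangle \dd\rho \le L_V \int \|\dot T_t\|^2\dd\rho = L_V\,W_2^2(\mu_0,\mu_1)$, using \ref{well_cond}. For the entropy, I would apply the change of variables $\mu_t = (T_t)_\sharp\rho$ to obtain $\cH(\mu_t) = \cH(\rho) - \int \log\det DT_t \dd\rho$. Differentiating twice in $t$ using $\frac{d^2}{dt^2}\log\det B_t = -\tr(B_t^{-1}\dot B_t B_t^{-1}\dot B_t)$ (valid since $\ddot B_t = 0$) yields
\begin{align*}
    \ddot\cH(\mu_t) = \int \tr\bigl(DT_t^{-1}\,\dot{(DT_t)}\,DT_t^{-1}\,\dot{(DT_t)}\bigr)\dd\rho\,.
\end{align*}
The lower bound $DT_t \succeq \alpha I$ (this is where the \emph{tip} $\alpha\id$ is essential) gives the pointwise inequality $\tr((DT_t^{-1}\dot{(DT_t)})^2) \le \alpha^{-2}\|\dot{(DT_t)}\|_F^2$, so integrating and using the definition of $Q^{(1)}$,
\begin{align*}
    \ddot\cH(\mu_t) \le \alpha^{-2}\int \Bigl\| \sum_T \Delta_T\, DT\Bigr\|_F^2\dd\rho = \alpha^{-2}\,\Delta^\top Q^{(1)}\Delta \le (\Upsilon/\alpha^2)\,\Delta^\top Q\,\Delta \le (\Upsilon/\alpha^2)\,W_2^2(\mu_0,\mu_1),
\end{align*}
by \ref{regular_dict}. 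Adding the two bounds yields the claimed smoothness constant $L_V + \Upsilon/\alpha^2$.

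The only non-routine step is controlling the entropy, and the mechanism making it work is precisely the interaction between the two structural assumptions: the tip $\alpha\id$ produces a uniform lower bound $DT_t \succeq \alpha I$ that tames the non-linearity of $\log\det$, and the regularity condition $Q^{(1)}\preceq \Upsilon Q$ then converts the resulting Jacobian Gram-matrix bound back into the Wasserstein metric through the isometry. Without either ingredient, $\ddot\cH$ cannot in general be bounded along Wasserstein geodesics, which is why restriction to $\augtip_\sharp\rho$ with a regular $\cM$ is what makes the smoothness of KL recoverable in our setting.
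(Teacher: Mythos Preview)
Your proof is correct and follows essentially the same approach as the paper: split the KL divergence into the potential and entropy parts, bound the potential Hessian via $\nabla^2 V \preceq L_V I$, and handle the entropy by passing to $-\int\log\det DT_t\dd\rho$, using $DT_t \succeq \alpha I$ together with the regularity assumption $Q^{(1)} \preceq \Upsilon Q$ to obtain the $\Upsilon/\alpha^2$ term. The only cosmetic difference is that the paper invokes the standard Wasserstein Hessian expressions $\E_{\mu_{\lambda,v}}\|DT_{\lambda,v}^{\eta,u}-I\|_{\rm F}^2$ and then changes variables back to $\rho$, whereas you compute $\partial_t^2\cH(\mu_t)$ directly from the $\log\det$ formula; these produce the same quadratic form and the same bound.
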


From~\Cref{thm:regularity}, we know that the optimal transport map $T^\star$ from $\rho$ to the mean-field solution $\pi^\star$ is the gradient of a $1/\sqrt{L_V}$-strongly convex potential, so we take $\alpha = 1/\sqrt{L_V}$.
The smoothness constant for the KL divergence then becomes $(1+\Upsilon)\,L_V$.
With these results in hand, we can state our accelerated convergence guarantees for mean-field VI, which follow directly from the previous propositions, and \cref{thm: gd_theorem}.

\begin{theorem}[Accelerated mean-field VI]\label{thm: acc_vi}
    Assume that $\pi$ is well-conditioned~\ref{well_cond} and that $\cM$ is regular~\ref{regular_dict}.
    Let $\pi^\star_\diamond$ denote the unique minimizer of $\kl{\cdot}{\pi}$ over the polyhedral set $\Pdiam = \augtip_\sharp \rho$ with $\alpha = 1/\sqrt{L_V}$. Then, the iterates of accelerated projected gradient descent yield a measure $\mu_{(t)}$ with the guarantee $W_2(\mu_{(t)}, \pi^\star_{\diamond}) \leq \eps$, with a number of iterations bounded by
    \begin{align*}
        t = O\Bigl(\sqrt{\kappa\,(1 + \Upsilon)}\,\log\bigl(\sqrt{\kappa}\,W_2(\mu_{(0)},\pi^\star_{\diamond})/\eps\bigr)\Bigr)\,,
    \end{align*}
    where $\kappa \defeq L_V/\ell_V$ is the condition number of $\pi$. 
\end{theorem}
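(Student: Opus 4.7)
The plan is to stitch together the three preceding results---geodesic convexity of the KL divergence, geodesic smoothness, and the APGD convergence rate from \Cref{thm: gd_theorem}---and read off the iteration complexity. No new analytic work is required; the proof is a bookkeeping exercise.

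First, I would identify the effective strong convexity and smoothness constants of the objective $\cF \defeq \kl{\cdot}{\pi}$ restricted to $\Pdiam = \augtip_\sharp \rho$. Since $\Pdiam$ is polyhedral and hence geodesically convex (\Cref{thm: comp_geodconv} together with \Cref{lem: cones}, \Cref{lem:add_id}, \Cref{lem:translations} applied to the regular family $\cM$), \Cref{prop: kl_convex} gives geodesic strong convexity of $\cF$ on $\Pdiam$ with parameter $m = \ell_V$. Next, with the prescribed tip $\alpha = 1/\sqrt{L_V}$, \Cref{prop: kl_smooth} yields geodesic smoothness with $M \le L_V + \Upsilon/\alpha^2 = L_V\,(1+\Upsilon)$. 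The effective condition number is therefore
\begin{equation*}
    \kappa_{\mathrm{eff}} \defeq M/m \le \kappa\,(1+\Upsilon)\,.
\end{equation*}

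Next, I would invoke the isometry of \Cref{thm: isometry} (extended to the augmented setting as described in \Cref{sec:enrich}), which identifies $\Pdiam$ with a convex subset of $\R_+^{|\cM|}\times \R^d$ equipped with a Euclidean norm. Under this identification, running APGD on the Euclidean problem as in \Cref{alg: gradient_descent_cj} corresponds to constrained optimization of $\cF$ over $\Pdiam$. Applying the strongly convex case of \Cref{thm: gd_theorem} with parameters $(m,M)$ gives, for the iterates $\mu_{(t)}$,
\begin{equation*}
    W_2^2(\mu_{(t)},\pi^\star_\diamond) \;\lesssim\; \kappa_{\mathrm{eff}}\,\exp\!\bigl(-t/\sqrt{\kappa_{\mathrm{eff}}}\bigr)\,W_2^2(\mu_{(0)},\pi^\star_\diamond)\,.
\end{equation*}

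Finally, I would solve for the iteration count. Requiring the right-hand side to be at most $\eps^2$ and taking logarithms yields
\begin{equation*}
    t \;\gtrsim\; \sqrt{\kappa_{\mathrm{eff}}}\,\log\!\Bigl(\sqrt{\kappa_{\mathrm{eff}}}\,W_2(\mu_{(0)},\pi^\star_\diamond)/\eps\Bigr)\,,
\end{equation*}
and substituting $\kappa_{\mathrm{eff}} \le \kappa\,(1+\Upsilon)$ gives the stated bound (the factor $\sqrt{1+\Upsilon}$ inside the logarithm can be absorbed into the outer $\sqrt{\kappa\,(1+\Upsilon)}$ up to constants, or alternatively bounded by $\sqrt{\kappa}$ times a logarithmic overhead in $\Upsilon$). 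There is no genuine obstacle here; the only care needed is to confirm that \Cref{thm: gd_theorem} applies in the augmented parameterization, but this is precisely the content of the isometry extension noted in \Cref{sec:enrich}, so the entire argument is one short paragraph of composition.
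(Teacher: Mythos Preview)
Your proposal is correct and matches the paper's approach exactly: the paper states that the theorem ``follow[s] directly from the previous propositions, and \Cref{thm: gd_theorem},'' which is precisely the composition of \Cref{prop: kl_convex}, \Cref{prop: kl_smooth} (with $\alpha = 1/\sqrt{L_V}$ giving $M \le L_V(1+\Upsilon)$), and the strongly convex case of \Cref{thm: gd_theorem} that you carry out. Your handling of the $\sqrt{1+\Upsilon}$ factor inside the logarithm is also appropriate, as it is absorbed into the big-$O$.
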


By combining~\Cref{thm: acc_vi} with our approximation result in~\Cref{thm:approx_improved}, which provides a bound on $W_2(\pi^\star_\diamond,\pi^\star)$ for explicit choices of $\Pdiam$ with corresponding bounds on the size of $|\cM|$, we can then ensure that the iterate $\mu_{(t)}$ is close to $\pi^\star$ in the Wasserstein distance.
{Namely, we ensure that $W_2(\mu_{(t)}, \pi^\star) \le \varepsilon$ provided that we use either of the dictionaries in~\Cref{thm:approx_improved} and we take the number of iterations $t$ as in~\Cref{thm: acc_vi}; here, $t$ is the iteration complexity, whereas the bounds on the size of the dictionary in~\Cref{thm:approx_improved} govern the per-iteration cost.}

{As previously mentioned, our analysis is made possible by bypassing the non-differentiability of entropy over $\cP(\R)$ and instead optimizing over pointed cones characterized by a univariate compatible family $\cM$. Thus, the constant $\Upsilon > 0$ should blow up as the polyhedral set approaches $\cP(\R)$. This fact is summarized in the following lemma for the piecewise-linear family. 
\begin{lemma}\label{lem:Upsilon_lemma}
Let $\cM$ be the piecewise linear construction of Theorem~\ref{thm: pi_closeness}. Then, $\Upsilon \lesssim |\cM_1|^2$, where $\cM_1$ is the generating family in a single dimension. Since $|\cM_1| = J$, the bound is equivalently $\Upsilon \lesssim J^2$.
\end{lemma}}
{As a corollary, we can fully characterize the runtime of solving the MFVI problem. 
\begin{corollary}[End-to-end guarantees for MFVI] Consider the setting of Theorem~\ref{thm: acc_vi}. Then the required runtime to compute $\pi_\diamond^\star$ becomes 
\begin{align*}
    t = O(J\kappa^{1/2}\log(\sqrt{\kappa}d/\eps))\,.
\end{align*}
If $\pi_\diamond^\star$ is meant to approximation $\pi^\star$, then we can use the approximation guarantees from Theorem~\ref{thm:approx_improved} to obtain the complete convergence guarantee of
\begin{align*}
    t = O({\kappa^{5/2}}d^{1/2}\eps^{-1}\log(\sqrt{\kappa}d/\eps))\,.
\end{align*}
\end{corollary}
As we demonstrate below, the regime of $J=O(1)$ appears to suffice numerically. To the best of our knowledge, this constitutes the first \emph{accelerated} and \emph{end-to-end} convergence result for mean-field VI\@.
See~\Cref{sec: mfvi} for comparisons with the literature.
}
\subsection{Algorithms for mean-field VI}\label{sec: algorithm_and_sgd}

In this section, we discuss implementation details for our proposed mean-field VI algorithm, which includes an analysis of stochastic gradient descent over our polyhedral sets.

\subsubsection{Implementation details}\label{sec: implementation_details}

Recall that the goal is to compute a product measure approximation to $\pi$ which has density proportional to $\exp(-V)$ on $\R^d$.

\paragraph*{Building the family of maps.}
The first step is to build a family $\cM_1$ of increasing maps $\R\to\R$.
The specification of these maps is left to the user; in~\Cref{sec: mfvi_approx}, we have provided an example of a family of maps with favorable approximation properties.
For later purposes, it is also important to center the maps to ensure that they have mean zero under $\rho$; this is done by computing the expectations of the maps via one-dimensional Gaussian quadrature and subtracting the means.

Let $J$ denote the size of $|\cM_1|$ and write $\cM_1 = \{T_1,\dotsc,T_J\}$.

\paragraph*{Parameterization of the cone.}
As discussed in~\Cref{sec:enrich}, it is useful to augment the cone with translations.
Once the one-dimensional family $\cM_1$ has been specified, it generates the $d$-dimensional augmented cone of maps parameterized by $(\lambda,v) \in \R_+^{Jd}\times\R^d$: the corresponding map $T^{\lambda,v}$ is given by $T^{\lambda,v}(x) = \alpha x + \sum_{i=1}^d \sum_{j=1}^J \lambda_{i,j} T_j(x_i)\,e_i + v$. 

\paragraph*{Construction of the $Q$ matrix.}
For concreteness, let us fix the reference measure $\rho$ to be the standard Gaussian $\cN(0,I_d)$.
We must compute the $Jd\times Jd$ matrix $Q$, with entries $Q_{(i,j); (i',j')} \defeq \int \langle T_j(x_i)\,e_i, T_{j'}(x_{i'})\,e_{i'}\rangle \,\rho(\!\dd x)$.
From this expression, it is clear that $Q$ is block diagonal; in fact, if we let $Q^{\cM_1}$ denote the matrix corresponding to the one-dimensional family, with entries $Q^{\cM_1}_{j,j'} \defeq \int T_j T_{j'}\dd\rho_1$ (here $\rho_1$ is the one-dimensional standard Gaussian), then $Q = I_d \otimes Q^{\cM_1}$, and hence the full matrix $Q$ never has to be stored in memory.

The entries of the $J\times J$ matrix $Q^{\cM_1}$ can be precomputed, either via Monte Carlo sampling from $\rho_1$, or via one-dimensional Gaussian quadrature.

\paragraph*{Computation of the gradient and projection.}
In order to apply the algorithms in~\Cref{sec: algorithms}, we must specify the gradient of $\kl{(T^{\lambda,v})_\sharp \rho}{\pi}$ w.r.t.\ $(\lambda,v)$ and the projection operator w.r.t.\ the $Q$-norm, $\|\cdot\|_Q$. Recall that we compute the gradients and projections for the $\lambda$ variable w.r.t.\ $\|\cdot\|_Q$, and for the $v$ variable in the standard Euclidean norm.

Using the change of variables formula,
\begin{align*}
    \kl{(T^{\lambda,v})_\sharp \rho}{\pi}
    &= \int [V(T^{\lambda,v}(x)) - \log \det DT^{\lambda}(x)] \,\rho(\!\dd x) + \int \log \rho \dd \rho + \log Z\,.
\end{align*}
The partial derivatives are therefore computed to be
\begin{align}\label{eq: grad_kl}
    \begin{aligned}
        \partial_{\lambda_{i,j}} \kl{(T^{\lambda,v})_\sharp \rho}{\pi}
        &= \int \bigl[\partial_i V(T^{\lambda,v}(x))\, T_j(x_i) - \langle e_i, (DT^\lambda)^{-1}(x)\,e_i\rangle\,T_j'(x_i) \bigr]\, \rho(\!\dd x)\,, \\
        \nabla_v \kl{(T^{\lambda,v})_\sharp \rho}{\pi}
        &= \int \nabla V(T^{\lambda,v}(x))\,\rho(\!\dd x)\,.
    \end{aligned}
\end{align}
For the terms explicitly involving $V$, one can draw Monte Carlo samples from the Gaussian $\rho$ and approximate them via empirical averages (assuming access to evaluations of the partial derivatives of $V$).

To compute the second term, note that $DT^\lambda$ is diagonal:
\begin{align*}
    DT^\lambda(x)
    = \alpha I_d + \diag{\Bigl( \sum_{j=1}^J \lambda_{i,j} T_j'(x_i) \Bigr)_{i=1}^d}\,.
\end{align*}
Hence, inversion of $DT^\lambda(x)$ is very fast, requiring only $O(Jd)$ time to compute $DT^\lambda(x)$ and then $O(d)$ time to invert it.
Moreover, the $(i,i)$-entry of $(DT^\lambda)^{-1}(x)$ only depends on $x_i$, so the second term reduces to a \emph{one-dimensional integral}:
\begin{align*}
    \int \langle e_i, (DT^\lambda)^{-1}(x)\,e_i\rangle\,T_j'(x_i) \, \rho(\!\dd x)
    &= \int \frac{T_j'(x_i)}{\alpha + \sum_{j'=1}^J \lambda_{i,j'} T_{j'}'(x_i)} \,\rho_1(\!\dd x_i)\,.
\end{align*}
In turn, this one-dimensional integral can be computed rapidly via Gaussian quadrature.

To summarize: the gradient of the potential energy term (the term involving $V$) can be approximated via Monte Carlo sampling, and the gradient of the entropy term decomposes along the coordinates and can therefore be dealt with via standard quadrature rules.
Note that many of these steps can be parallelized.
In~\Cref{sec: sgd_mfvi}, we control the variance of the stochastic gradient, thereby obtaining guarantees for SPGD\@.

To compute the projection of a point $\eta \in \R^{Jd}$ onto the non-negative orthant $\R_+^{Jd}$ w.r.t.\ $\|\cdot\|_Q$, one must solve the following optimization problem:
\begin{align*}
    \min_{\lambda \in \R_+^{Jd}} \ \langle \lambda -\eta, Q\,(\lambda-\eta)\rangle\,.
\end{align*}
Again, due to the block diagonal structure of $Q$, this is equivalent to solving $d$ independent projection problems: in each one, we must project a point in $\R^J$ onto $\R_+^J$ in the $Q^{\cM_1}$-norm.
This is a smooth, convex problem that can itself be solved via, e.g., projected gradient descent, or L-BFGS-B~\citep{zhu1997algorithm}, {or any standard quadratic program solver}.

\subsubsection{Convergence for stochastic mean-field VI}\label{sec: sgd_mfvi}

In~\Cref{sec: implementation_details}, we noted that in general, the gradient of the KL divergence involves an integral over $\rho$, which can be approximated via Monte Carlo sampling.
This leads to a \emph{stochastic} projected gradient algorithm for mean-field VI\@, and this section is devoted to obtaining convergence guarantees for SPGD\@.

Our goal here is not to conduct a comprehensive study, but rather to show how such guarantees can be obtained, and hence we impose a number of simplifying assumptions.
We do not work with the cone augmented by translations, so that the maps are parameterized solely by $\lambda \in \R_+^{|\cM|}$ (the $v$-component is easier to handle and only introduces extra notational burden into the proofs).
Also, we consider a stochastic approximation of the gradient of the potential term via a single sample drawn from $\rho$ at each iteration, and we assume that the gradient of the entropy is computed exactly. As discussed in~\Cref{sec: implementation_details}, the gradient of the entropy can be handled via one-dimensional quadrature.

Even with these simplifications, the variance bound is somewhat involved.
Motivated by the piecewise linear construction of~\Cref{thm: pi_closeness}, in which all maps $T\in \cM$ can be taken to be \emph{bounded}, we impose the following assumption.

\begin{enumerate}[label=$(\Xi)$]
    \item\label{boundedness} There exists $\Xi > 0$ such that for the Gram matrix $Q^{\cM_1}$ associated with the centered \emph{univariate} family $\cM_1$, we have the pointwise bound  $\langle Q^{-1}, \bar{Q}(x) \rangle \leq \Xi J$ for all $x\in\R$, where $\bar{Q}_{T,\tilde{T}}(x) = {T(x) \tilde{T}(x)}$ for $T,\tilde{T} \in \cM_1$.
    Here, $J \defeq |\cM_1|$.
\end{enumerate}

{Similarly to~\cref{lem:Upsilon_lemma}, we can also quantify $\Xi$ for the piecewise linear dictionary.

\begin{lemma}\label{lem:Xi}
    Let $\cM$ be the piecewise linear construction of~\cref{thm: pi_closeness}. Then, $\Xi \lesssim |\cM_1|^2$, where $\cM_1$ is the generating family in a single dimension.
    Since $|\cM_1| = J$, the bound is equivalently $\Xi\lesssim J^2$.
\end{lemma}}

The following lemma established a variance bound of the type~\ref{var_bound} which, when combined with \cref{thm: sgd_theorem}, proves \cref{thm: sgd_vi}.

\begin{lemma}[Variance bound for stochastic mean-field VI]\label{lem: mfvi_sgd_varbound}
Assume that $\pi$ is well-cond\-itioned~\ref{well_cond} and that $\cM$ is generated from a univariate family $\cM_1$ satisfying~\ref{boundedness}. Let $Q^{-1}\,\hat{\nabla}_\lambda\kl{\cdot}{\pi}$ denote the stochastic gradient (see~\Cref{app:mf_var_bd}). Let $\pi^\star_{\diamond}$ denote the unique minimizer of $\kl{\cdot}{\pi}$ over $\cone(\cM;\,\alpha\id)_\sharp \rho$ with $\alpha = 1/\sqrt{L_V}$. Then, the following second moment bound holds:
\begin{align*}
    \E[\tr\Cov(Q^{-1/2}\,\hat{\nabla}_\lambda\kl{\mu_{\lambda}}{\pi})]
    &\leq 2L_V^2\Xi J\, W_2^2(\mu_{\lambda},\pi^\star_\diamond)
    + 4L_V \Xi J \, (L_V \,W_2^2(\pi^\star_\diamond,\pi^\star) + \kappa d)\,.
\end{align*}
\end{lemma}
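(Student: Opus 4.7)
The strategy is to isolate the randomness to the Monte Carlo estimate of the potential gradient, peel off from $\mu_\lambda$ through $\pi^\star_\diamond$ to $\pi^\star$ via Lipschitzness of $\nabla V$ combined with the Wasserstein--Euclidean isometry, and then reduce the remaining quantity $\E_{\pi^\star}\|\nabla V\|^2$ using the mean-field equations and the Poincar\'e inequality.

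First, since the entropy part of~\eqref{eq: grad_kl} is computed exactly, only the potential term contributes to $\Cov(\hat\nabla_\lambda \kl{\mu_\lambda}{\pi})$. Writing $W_{i,j}(X) \defeq \partial_i V(T^\lambda(X))\, T_j(X_i)$, we have $\tr\Cov(Q^{-1/2}\hat\nabla) \le \langle Q^{-1}, \E[W W^\top]\rangle$. The block-diagonal structure $Q = I_d \otimes Q^{\cM_1}$ kills all contributions with $i \neq i'$, while assumption~\ref{boundedness} bounds the surviving inner factor $\langle (Q^{\cM_1})^{-1}, \bar{Q}(X_i)\rangle$ pointwise by $\Xi J$. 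Combining these reduces the left-hand side to $\Xi J \cdot \E \|\nabla V(T^\lambda(X))\|^2$.

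Second, the maps $T^\lambda$, $T^{\lambda^\star_\diamond}$, and the coordinate-wise separable optimal transport map $T^\star$ from $\rho$ to $\pi^\star$ all lie in a common compatible family (by~\Cref{lem: 1d_maps} and~\Cref{lem: direct_sum}), so~\Cref{thm: isometry} yields $\|T^\lambda - T^{\lambda^\star_\diamond}\|_{L^2(\rho)} = W_2(\mu_\lambda, \pi^\star_\diamond)$ and the analogous identity for $(T^{\lambda^\star_\diamond}, T^\star)$. Two applications of $L_V$-Lipschitzness of $\nabla V$ (with the split $(a+b)^2 \le 2a^2 + 2b^2$) yield
\begin{align*}
    \E\|\nabla V(T^\lambda(X))\|^2 \le 4\, \E_{\pi^\star}\|\nabla V\|^2 + 4 L_V^2\, W_2^2(\pi^\star_\diamond, \pi^\star) + 2 L_V^2\, W_2^2(\mu_\lambda, \pi^\star_\diamond)\,.
\end{align*}

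Third, differentiating the mean-field equations~\eqref{eq: mf_equations} and integrating by parts gives $\E_{\pi^\star}[\partial_i V] = 0$ for every $i$, so $\E_{\pi^\star}\|\nabla V\|^2 = \sum_i \var_{\pi^\star}(\partial_i V)$. Since $\pi^\star$ is a product of $\ell_V$-log-concave univariate measures (\cref{prop: mfe_properties}), it is itself $\ell_V$-log-concave and therefore satisfies Poincar\'e's inequality with constant $1/\ell_V$; combined with the bound $\|\nabla \partial_i V\| \le L_V$ from~\ref{well_cond}, this yields $\var_{\pi^\star}(\partial_i V) \le L_V^2/\ell_V$, and summing over $i$ produces $\E_{\pi^\star}\|\nabla V\|^2 \le L_V \kappa d$. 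Chaining this with the inequalities of the previous two paragraphs reproduces the stated bound. The main obstacle is this last step: a direct Lipschitz bound would introduce a stray term $\|\nabla V(y_0)\|^2$ anchored at some deterministic $y_0$ with no clean control, and it is the mean-field self-consistency identity $\E_{\pi^\star}[\nabla V] = 0$ that upgrades a second-moment bound to a variance bound, allowing Poincar\'e to deliver the correct scaling $L_V \kappa d$.
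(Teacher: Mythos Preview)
Your proof is correct and follows essentially the same route as the paper: both exploit the block-diagonal structure of $Q$ together with~\ref{boundedness} to reduce to $\Xi J \cdot \E_\rho\|\nabla V \circ T^\lambda\|^2$, then peel off to $\pi^\star_\diamond$ and $\pi^\star$ via $L_V$-Lipschitzness of $\nabla V$ and the coordinate-wise compatibility isometry, and finally bound $\E_{\pi^\star}\|\nabla V\|^2$ after using the mean-field identity $\E_{\pi^\star}[\nabla V] = 0$. The only difference is in this last step: you invoke the Poincar\'e inequality for the $\ell_V$-strongly log-concave measure $\pi^\star$, whereas the paper uses the Brascamp--Lieb inequality with respect to the product potential $\sum_i V_i$; both yield the same bound $L_V\kappa d$ after applying $V_i'' \ge \ell_V$ and $\nabla^2 V \preceq L_V I$, so the distinction is cosmetic here.
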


Let us assume that the $\kappa d$ term is larger than $L_V W_2^2(\pi^\star_\diamond,\pi^\star)$; this can be guaranteed via the approximation result in~\Cref{sec: mfvi_approx}.
The next theorem follows immediately from \cref{thm: sgd_theorem} and the previous lemma.

\begin{theorem}[Convergence of stochastic mean-field VI]\label{thm: sgd_vi}
    Assume that $\pi$ is well-conditioned ~\ref{well_cond} and that $\cM$ is regular~\ref{regular_dict} and generated by a univariate family satisfying~\ref{boundedness}.
    Let $\pi^\star_{\diamond}$ denote the unique minimizer of $\kl{\cdot}{\pi}$ over $\cone(\cM;\,\alpha\id)_\sharp \rho$ with $\alpha = 1/\sqrt{L_V}$. Then, {for all sufficiently small $\eps$, }the iterates of stochastic projected gradient descent yield a measure $\mu_{(t)} $ with the guarantee $\sqrt{\ell_V}\,\E[W_2(\mu_{(t)}, \pi^\star_{\diamond})] \leq \eps$, with a number of iterations bounded by
    \begin{align*}
        t \gtrsim \frac{\Xi \kappa^2 Jd}{\eps^2}\log(\sqrt{\ell_V}\,W_2(\mu_{(0)},\pi^\star_\diamond)/\eps)\,,
    \end{align*}
    and step size $h \asymp \eps^2/(L_V \Xi \kappa Jd)$.
\end{theorem}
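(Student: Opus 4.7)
The plan is to combine the variance bound of \Cref{lem: mfvi_sgd_varbound} with the general SPGD convergence result \Cref{thm: sgd_theorem}, after first rescaling the target accuracy to account for the $\sqrt{\ell_V}$ factor appearing in the theorem statement. Concretely, I would set $\tilde\eps \defeq \eps/\sqrt{\ell_V}$ and aim for $\E[W_2^2(\mu_{(t)},\pi^\star_\diamond)] \leq \tilde\eps^2$, which is exactly the conclusion of~\Cref{thm: sgd_theorem} once its hypotheses are verified.

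First I would assemble the strong convexity and smoothness parameters of the objective $\kl{\cdot}{\pi}$ over the polyhedral set $\cone(\cM;\,\alpha\id)_\sharp \rho$: by~\Cref{prop: kl_convex} it is $m$-strongly geodesically convex with $m = \ell_V$, and by~\Cref{prop: kl_smooth} it is $M$-smooth with $M \le (1+\Upsilon)\,L_V$ when $\alpha = 1/\sqrt{L_V}$. Next I would unpack the variance bound in~\Cref{lem: mfvi_sgd_varbound}, noting that $\E[\|Q^{-1}(\hat\nabla_\lambda - \nabla_\lambda)\|_Q^2] = \E[\tr\Cov(Q^{-1/2}\,\hat\nabla_\lambda)]$ when the estimator is unbiased. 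Using the assumption that $L_V\,W_2^2(\pi^\star_\diamond,\pi^\star) \lesssim \kappa d$ (guaranteed by the approximation results of~\Cref{sec: mfvi_approx}), the bound takes the form required by~\ref{var_bound} with
\begin{align*}
    c_1 \asymp L_V^2\,\Xi J\,, \qquad c_0 \asymp L_V\,\Xi J\,\kappa d\,.
\end{align*}

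I would then plug these constants into~\Cref{thm: sgd_theorem} applied with accuracy $\tilde\eps$. The prescribed step size becomes $h \asymp m\tilde\eps^2/c_0 = \ell_V\,(\eps^2/\ell_V)/(L_V\,\Xi J\,\kappa d) = \eps^2/(L_V\,\Xi\,\kappa J d)$, matching the statement; a short check confirms that this $h$ simultaneously satisfies the upper bounds $h \le m/(2c_1)$ and $h \le 1/(2\kappa_{\mathrm{opt}} M)$ (with $\kappa_{\mathrm{opt}} = M/m = (1+\Upsilon)\kappa$) provided $\eps$ is not pathologically large, which one may assume without loss of generality. The iteration count predicted by~\Cref{thm: sgd_theorem} is
\begin{align*}
    t \gtrsim \frac{c_0}{m^2\,\tilde\eps^2}\,\log(W_2(\mu_{(0)},\pi^\star_\diamond)/\tilde\eps)
    = \frac{L_V\,\Xi J\,\kappa d}{\ell_V\,\eps^2}\,\log(\sqrt{\ell_V}\,W_2(\mu_{(0)},\pi^\star_\diamond)/\eps)
    \asymp \frac{\Xi\,\kappa^2 J d}{\eps^2}\,\log(\sqrt{\ell_V}\,W_2(\mu_{(0)},\pi^\star_\diamond)/\eps)\,,
\end{align*}
exactly as claimed.

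The routine rescaling and constant-tracking are straightforward; the genuine content of the theorem lies entirely in the variance bound of~\Cref{lem: mfvi_sgd_varbound}, so the main obstacle (already absorbed into that lemma, which is proved elsewhere) is controlling $\tr\Cov(Q^{-1/2}\hat\nabla_\lambda)$ in a way that decomposes cleanly into an affine function of $W_2^2(\mu_\lambda,\pi^\star_\diamond)$. The only mild subtlety at this stage is verifying compatibility of the several upper bounds on $h$ mandated by~\Cref{thm: sgd_theorem}, which boils down to an inequality of the form $\eps^2 \lesssim d$ that is innocuous in the regime of interest.
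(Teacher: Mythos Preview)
Your proposal is correct and mirrors the paper's approach exactly: the paper states that the theorem ``follows immediately from \cref{thm: sgd_theorem} and the previous lemma'' (\Cref{lem: mfvi_sgd_varbound}), and you have simply spelled out the constant-tracking that makes this immediate. If anything, your write-up is more detailed than the paper's, which leaves the substitution $m=\ell_V$, $M=(1+\Upsilon)L_V$, $c_0\asymp L_V\Xi J\kappa d$, $c_1\asymp L_V^2\Xi J$ and the ensuing arithmetic entirely to the reader.
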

{As with Theorem~\ref{thm: acc_vi}, we can state the following corollary given that $\Xi$ is uniformly bounded via \Cref{lem:Xi}.
\begin{corollary}[End-to-end guarantees with SPGD] Consider the setting of Theorem~\ref{thm: sgd_vi}. Then the required runtime to estimate $\pi_\diamond^\star$ becomes 
\begin{align*}
    t = O(dJ^3 \kappa^{2} \eps^{-2}\log(\sqrt{\ell_V}d/\eps))\,.
\end{align*}
If $\pi_\diamond^\star$ is meant to approximation $\pi^\star$, then we can use the approximation guarantees from Theorem~\ref{thm:approx_improved} to obtain the complete convergence guarantee of
\begin{align*}
        t = O({\kappa^{8}}d^{5/2}\eps^{-5}\log(\sqrt{\ell_V}d/\eps))\,.
\end{align*}
\end{corollary}}

\section{Numerical experiments}\label{sec:experiments}
\begin{figure}[h]
\begin{minipage}{0.5\textwidth}
    \centering
    \includegraphics[width=0.75\textwidth]{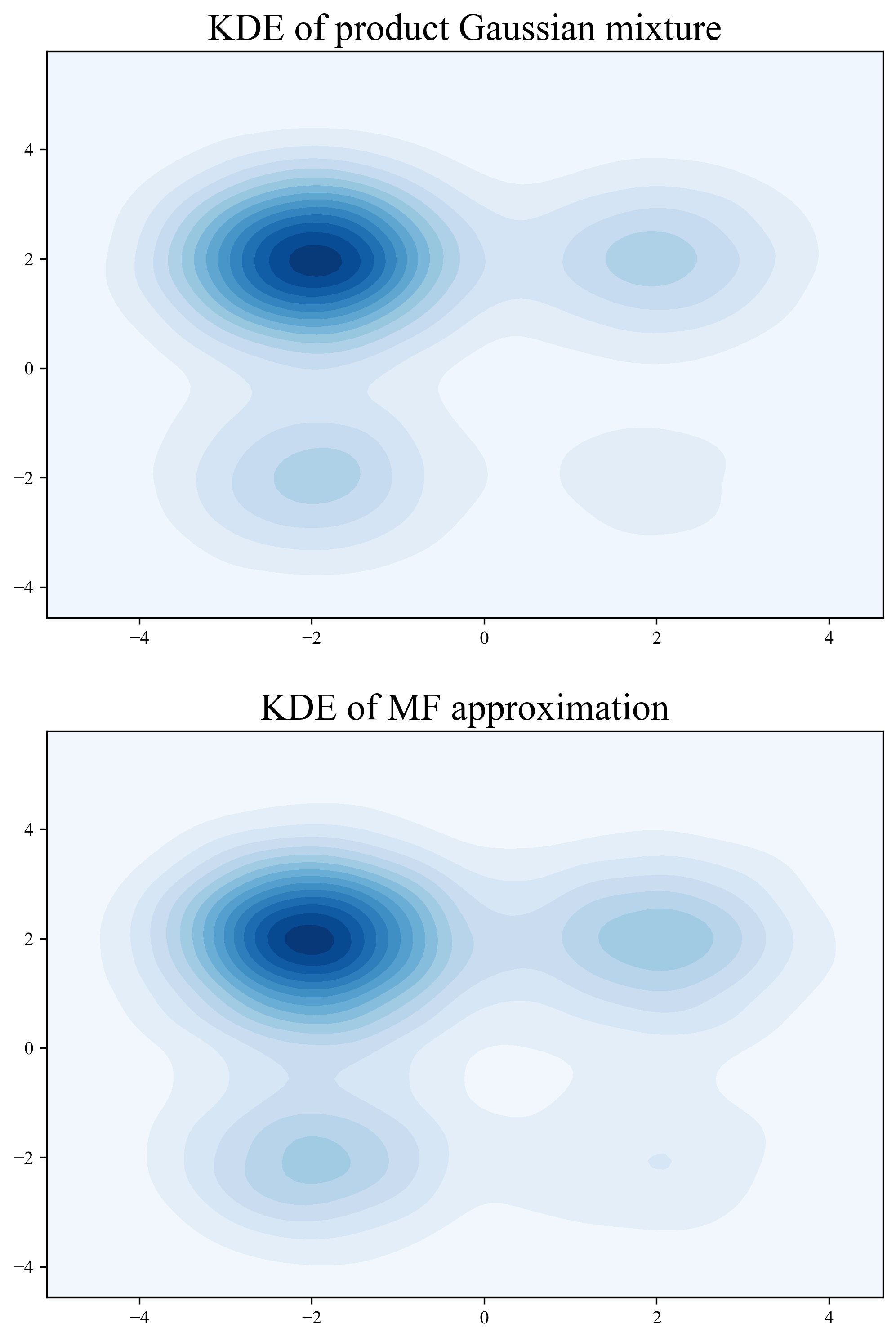}
    \caption{KDEs for the optimal product Gaussian mixture and our algorithm.}\label{fig:2d_gm}
\end{minipage}
\begin{minipage}{0.5\textwidth}
    \centering
    \includegraphics[width=0.75\textwidth]{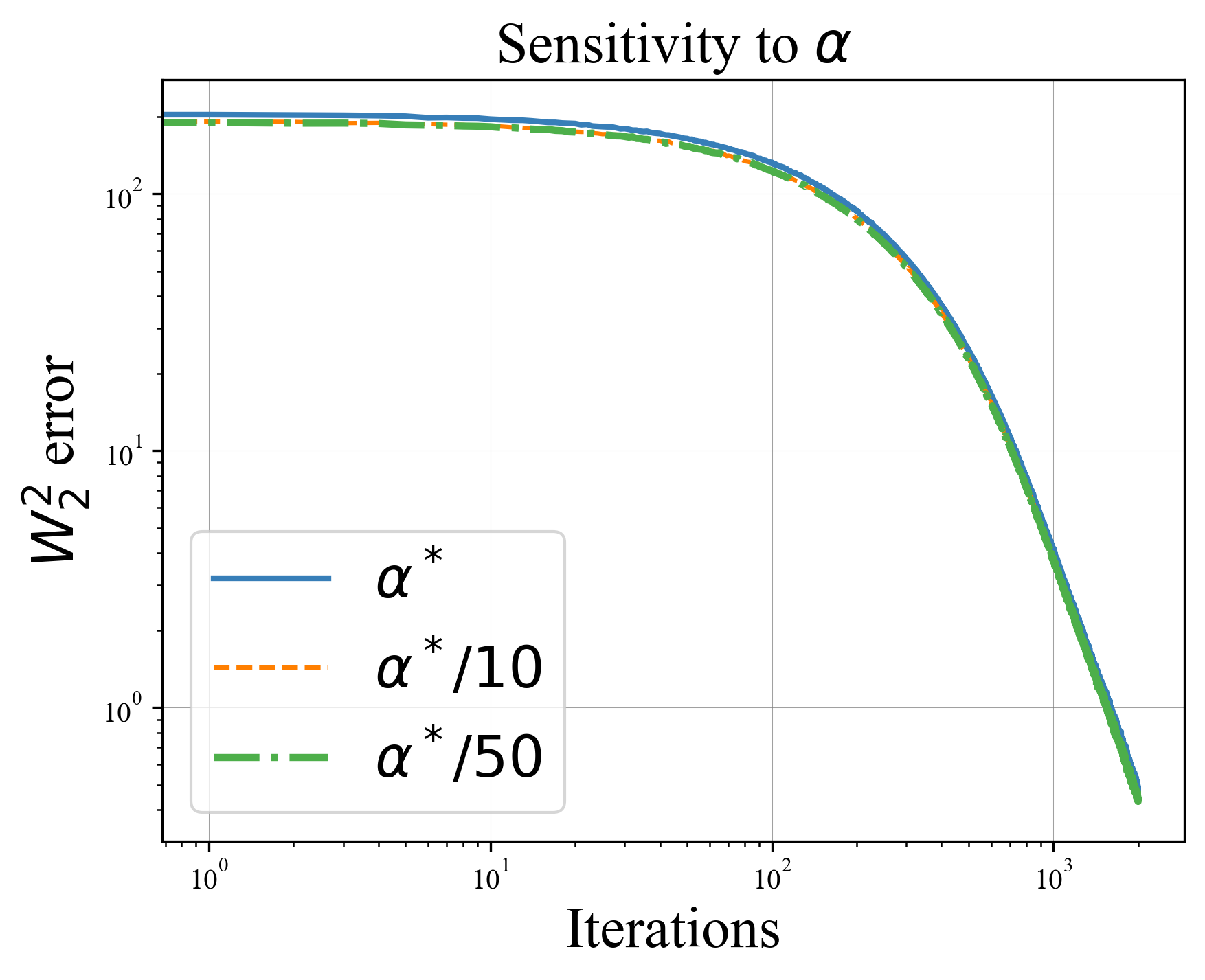}
    \caption{Our algorithm is \textbf{robust to the choice of $\alpha$}.}\label{fig:gaussian}
\end{minipage}
\end{figure}

We showcase our proposed MFVI algorithm on numerical experiments. Experimental details are deferred to~\Cref{sec:experiment_details}, and the code to reproduce the experiments is available \href{https://github.com/APooladian/MFVI}{here}. Across all experiments, which include low- and high-dimensional settings, we took the piecewise linear dictionary (\Cref{thm: pi_closeness}) with the same value for the size $J = |\cM_1| = 28$ of the univariate family (hence $|\cM| = Jd$), and we ran stochastic gradient descent (without acceleration) with a batch size of 2000 samples per iteration.

\subsection{Product Gaussian mixture}
In our first experiment, the target is a mixture of four Gaussians in $\R^2$ which is itself a product measure.
Despite the non-log-concavity, our algorithm correctly recovers the correct target. Though, we note that this approach is sensitive to the initialization, but this is expected as the landscape is non-convex.

\subsection{Non-isotropic Gaussian}
Next, we computed the mean-field approximation of a randomly generated centered and non-isotropic Gaussian in dimension $d=5$. Letting $\Sigma$ denote the covariance matrix, the mean-field approximation is also a Gaussian with diagonal covariance and entries $(\Sigma_{\text{MF}})_{i,i} = 1/(\Sigma^{-1})_{i,i}$ (see \Cref{sec:nonisogaussian_exp} for a calculation of this fact).

In~\Cref{fig:gaussian}, we plot the $W_2^2$ error between the covariance matrix of our algorithm iterate (computed from samples) and $\Sigma_{\rm MF}$, which is a lower bound on the true $W_2^2$ distance~\citep[cf.][]{CueMatTue1996W2Lower}.

In this case, the optimal parameter choice $\alpha^*$ is known, though this is rarely the case in practice. We ran our algorithm for various choices of $\alpha$, fixing all other parameters to be the same. We see that our algorithm does not depend heavily on the choice of hyperparameter $\alpha$, and the practitioner can safely choose a small value of $\alpha$ without sacrificing performance.  

\subsection{Synthetic Bayesian logistic regression}
As a final example, we turn to Bayesian logistic regression on synthetic data; precise details are deferred to \cref{sec:blogreg_exp}. In summary, we are given i.i.d. data $(X_i,Y_i) \in \R^d \times \R$ for $i=1,\ldots,n$, {(where $d=20$ and $n=100$)} from which we want to recover a parameter $\theta$. When assuming an \emph{improper} (Lebesgue) prior on $\theta$, the posterior is given by
\begin{align*}
    V(\theta) = \sum_{i=1}^n \bigl[\log(1 + \exp(\theta^\top X_i)) - Y_i\, \theta^\top X_i\bigr]\,.
\end{align*}
{Note that $V$ is not strongly convex as $V(\theta)$ behaves like a linear function as $\|\theta\| \to +\infty$.}
With $V$ and $\nabla V$ in hand, our algorithm is fully implementable. As we considered an improper prior, a comparison to CAVI is not possible.  Instead, we compared against standard Langevin Monte Carlo (LMC). The final histograms were generated using 2000 samples from both the mean-field VI algorithm and LMC\@. \Cref{fig:BLR_20} contains the 20 marginals for both our approach and LMC, which are closely aligned.

\begin{figure}[h]
    \centering
    \includegraphics[width=0.8\textwidth]{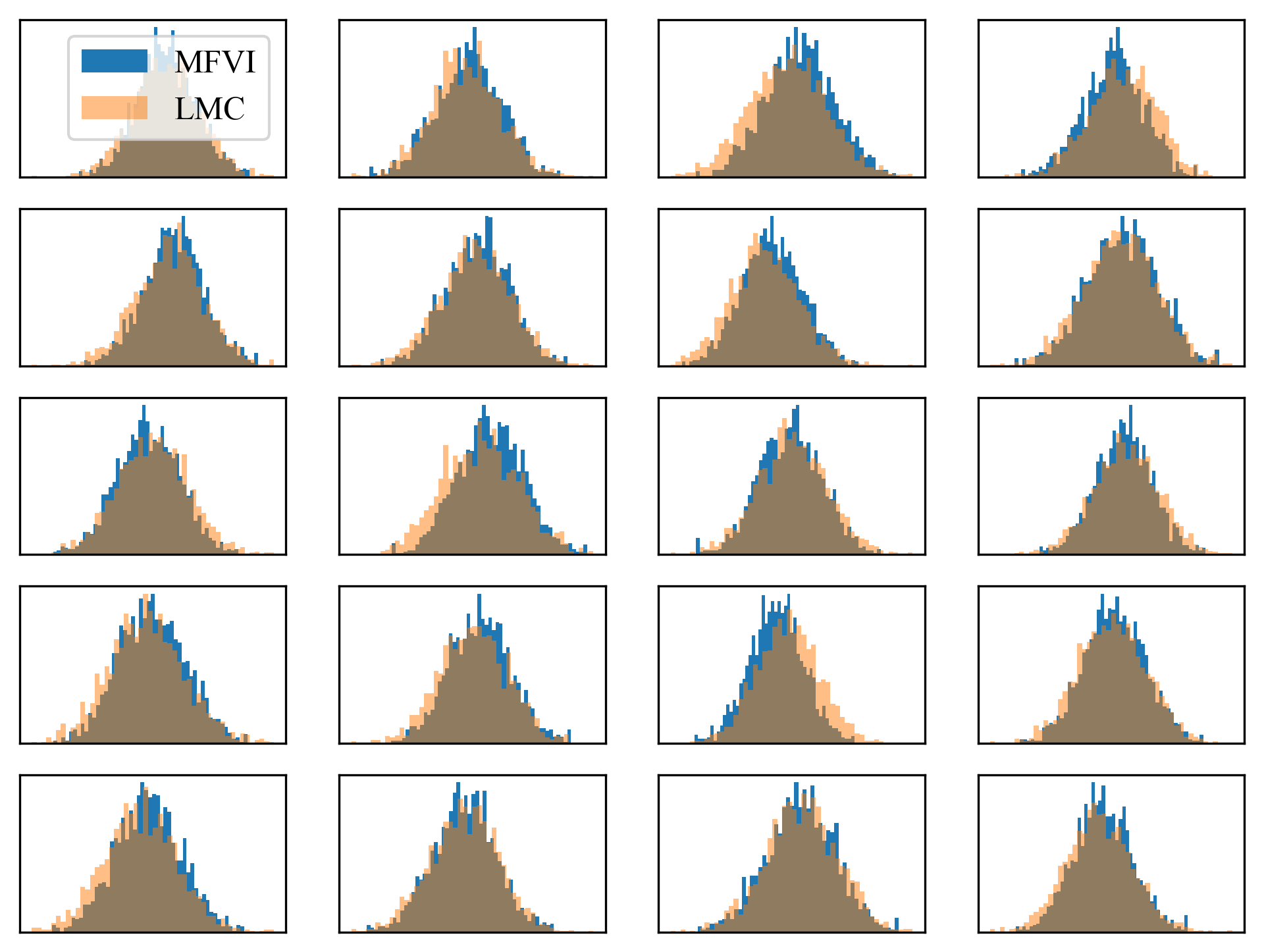}
    \caption{Histograms of the first ten marginals computed via our mean-field VI algorithm vs.\ Langevin Monte Carlo for a $20$-dimensional Bayesian logistic regression example.}\label{fig:BLR_20}
\end{figure}

\section{Extension to mixtures of product measures}\label{sec:mixtures_of_prod}

In this section, we extend our methodology to approximations via mixtures of product measures. The motivation is simply that many more measures can be approximated via mixtures of (approximate) product measures, e.g., Gibbs distributions with small gradient complexity~\citep{Eldan18GradComplexity, EldGro18Decomp, 
Aus19LowComplexity, JaiRisKoe19MeanField}.

In~\Cref{sec: mfvi_convergence}, we minimized $\kl{\cdot}{\pi}$ over $\augtip_\sharp \rho$, where $\augtip$ is parameterized by the pair $(\lambda, v) \in \man \defeq \R_+^{|\cM|} \times \R^d$, equipped with the norm $\norm \cdot_{Q\oplus I_d}$.
In this section, following~\cite{lambert2022variational}, a \emph{mixture of product measures} is specified by a mixing measure $P \in \cP(\man)$ and corresponds to the measure $\mu_P \defeq \int (T^{\lambda,v})_\sharp \rho\, P(\!\dd \lambda,\!\dd v)$.
We can now equip the space $\cP(\man)$ with the Wasserstein geometry (with respect to $\norm \cdot_{Q\oplus I_d}$), and we shall derive the Wasserstein gradient flow of the functional $P \mapsto \kl{\mu_P}{\pi}$.

This approach to mixture modelling is inspired by the distance on Gaussian mixtures proposed in~\citet{CheGeoTan19GMM, DelDes20GMM}; see~\citet{BinBunNil23Mixing} for a statistical perspective.

In this section, we again use the abstract parameterization $T^{\lambda,v} = \alpha \id + \sum_{T\in\cM} \lambda_T T + v$.
Proofs are given in~\Cref{app:pf_mixtures}.

\begin{theorem}\label{thm:wgf_mixtures}
    The Wasserstein gradient flow of $P\mapsto \kl{\mu_P}{\pi}$ is the flow ${(P^{(t)})}_{t\ge 0}$ specified as follows.
    For each $t\ge 0$, $P^{(t)}$ is the law of $(\lambda^{(t)}, v^{(t)})$, where
    \begin{align*}
        \dot \lambda^{(t)}_T
        &= -\int \bigl\langle \nabla \log \frac{\mu_{P^{(t)}}}{\pi} \circ T^{\lambda^{(t)}, v^{(t)}}, T\bigr\rangle\dd \rho\,, &&\text{for}~T\in\cM\,, \\
        \dot v^{(t)}
        &= -\int \nabla \log \frac{\mu_{P^{(t)}}}{\pi}\circ T^{\lambda^{(t)}, v^{(t)}}\dd\rho\,.
    \end{align*}
\end{theorem}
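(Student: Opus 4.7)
The plan is to apply the standard recipe for Wasserstein gradient flows over a measure space: compute the first variation $\delta\cF/\delta P$ of $\cF(P)\defeq\kl{\mu_P}{\pi}$, then recognize that the flow on $\cP(\man)$ is described at the particle level by $\dot x_t = -\nabla\frac{\delta\cF}{\delta P_t}(x_t)$, where the gradient is taken with respect to the metric on $\man$.

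The crucial observation is the \emph{linearity} of $P\mapsto\mu_P$: for any signed perturbation $\chi$ on $\man$ with $\int\!\dd\chi=0$, we have $\mu_{P+\epsilon\chi} = \mu_P + \epsilon\,\mu_\chi$ where $\mu_\chi\defeq\int(T^{\lambda,v})_\sharp\rho\,\chi(\!\dd\lambda,\!\dd v)$. Combining this with the classical first-variation formula $\frac{\delta}{\delta\nu}\kl{\nu}{\pi}=\log(\nu/\pi)$ and applying Fubini yields
\begin{align*}
    \frac{d}{d\epsilon}\Big|_{\epsilon=0}\cF(P+\epsilon\chi)
    &= \int\log\frac{\mu_P}{\pi}\dd\mu_\chi
    = \int\Bigl[\int\log\frac{\mu_P}{\pi}\circ T^{\lambda,v}\dd\rho\Bigr]\chi(\!\dd\lambda,\!\dd v)\,,
\end{align*}
so $\frac{\delta\cF}{\delta P}(\lambda,v)=\int\log(\mu_P/\pi)\circ T^{\lambda,v}\dd\rho$ up to an additive constant. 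Using the explicit form $T^{\lambda,v}=\alpha\id+\sum_{T\in\cM}\lambda_T T+v$, differentiation under the integral together with the Euclidean chain rule immediately give $\partial_{\lambda_T}\frac{\delta\cF}{\delta P}(\lambda,v)=\int\langle\nabla\log(\mu_P/\pi)\circ T^{\lambda,v},\,T\rangle\dd\rho$ and $\nabla_v\frac{\delta\cF}{\delta P}(\lambda,v)=\int\nabla\log(\mu_P/\pi)\circ T^{\lambda,v}\dd\rho$. Substituting these into the particle ODE yields the claimed dynamics.

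The main technical obstacle is making these formal manipulations rigorous. To ensure that $\log(\mu_P/\pi)$ is a meaningful and differentiable object, one must verify that $\mu_P$ is absolutely continuous: this follows because $\rho$ has a density and each $T^{\lambda,v}$ is a diffeomorphism (courtesy of the $\alpha\id$ term with $\alpha>0$), so $\mu_P$ inherits a density as a superposition of pushforwards. One then needs to justify the interchange of $\partial_{\lambda_T}$ (resp.\ $\nabla_v$) with the $\rho$-integral via dominated convergence, for which it suffices that $\nabla\log(\mu_P/\pi)$ grows at most linearly---a consequence of the log-smoothness of $\pi$ from~\ref{well_cond} combined with mild moment bounds on $P$. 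These considerations closely mirror the analogous computations for Gaussian VI in~\cite{lambert2022variational}.
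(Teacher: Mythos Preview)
Your proposal is correct and follows essentially the same approach as the paper: compute the first variation $\delta\cF(P)(\lambda,v)=\int\log(\mu_P/\pi)\circ T^{\lambda,v}\dd\rho$ (up to a constant), then differentiate in $(\lambda,v)$ with respect to the metric on $\man$ to obtain the Wasserstein gradient, exactly as the paper does while deferring background to~\cite{lambert2022variational}. Your added remarks on absolute continuity of $\mu_P$ and dominated convergence are elaborations the paper omits, but they do not change the route.
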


In practice, we use a finite number $K$ of mixture components, in which case
\begin{align}\label{eq:mixture_equal_weights}
    P = \frac{1}{K}\sum_{k=1}^K \delta_{(\lambda[k], v[k])}\,, \qquad \mu_P = \frac{1}{K} \sum_{k=1}^K (T^{\lambda[k], v[k]})_\sharp \rho\,.
\end{align}
The system of ODEs above then becomes an interacting particle system:
\begin{align*}
    \dot\lambda_T^{(t)}[k]
    &= -\int \bigl\langle \nabla \log \frac{\mu_{P^{(t)}}}{\pi} \circ T^{\lambda^{(t)}[k], v^{(t)}[k]}, T\bigr\rangle\dd \rho\,, &&\text{for}~T\in\cM\,, \\
    \dot v^{(t)}[k]
    &= -\int \nabla \log \frac{\mu_{P^{(t)}}}{\pi}\circ T^{\lambda^{(t)}[k], v^{(t)}[k]}\dd\rho\,.
\end{align*}
The particles interact through the common term $\log \mu_{P^{(t)}}$.
More explicitly, by the change of variables formula,
\begin{align*}
    \mu_P = \frac{1}{K} \sum_{k=1}^K \frac{\rho \circ (T^{\lambda[k],v[k]})^{-1}}{\det D T^{\lambda[k],v[k]} \circ (T^{\lambda[k],v[k]})^{-1}}\,.
\end{align*}
{Note that computing $\nabla \log \mu_P$ now requires taking the second derivative of the transport maps, which hinders implementation. In this case, a smooth family $\cM$ is required.}

The dynamics~\eqref{eq:mixture_equal_weights} maintains equal weights for each of the particles at each time.
We can similarly derive the gradient flow with respect to the Wasserstein{--}Fisher{--}Rao {(or Hellinger--Kantorovich)} geometry, which captures unbalanced optimal transport~\citep{LieMieSav16HK, Chietal18FR, LieMieSav18HK}.
The use of this geometry for sampling was pioneered in~\citet{LuLuNol19BirthDeath}.

\begin{theorem}\label{thm:wfr_flow}
    The Wasserstein{--}Fisher{--}Rao gradient flow of $P\mapsto \kl{\mu_P}{\pi}$, initialized at $P^{(0)} = \sum_{k=1}^K w^{(0)}[k]\,\delta_{(\lambda^{(0)}[k], v^{(0)}[k])}$ with $\sum_{k=1}^K w^{(0)}[k] = 1$, can be described as follows.
    For each time $t\ge 0$, let $P^{(t)} = \sum_{k=1}^K w^{(t)}[k] \,\delta_{(\lambda^{(t)}[k], v^{(t)}[k])}$ and $r^{(t)}[k] \defeq \sqrt{w^{(t)}[k]}$.
    Then,
    \begin{align*}
        \dot\lambda_T^{(t)}[k]
        &= -\int \bigl\langle \nabla \log \frac{\mu_{P^{(t)}}}{\pi} \circ T^{\lambda^{(t)}[k], v^{(t)}[k]}, T\bigr\rangle\dd \rho\,, &&\text{for}~T\in\cM\,, \\
        \dot v^{(t)}[k]
        &= -\int \nabla \log \frac{\mu_{P^{(t)}}}{\pi}\circ T^{\lambda^{(t)}[k], v^{(t)}[k]}\dd\rho\,, \\
        \dot r^{(t)}[k]
        &= -\Bigl(\int \log \frac{\mu_{P^{(t)}}}{\pi}\circ T^{\lambda^{(t)}[k], v^{(t)}[k]}\dd \rho - \int \log\frac{\mu_{P^{(t)}}}{\pi}\dd \mu_{P^{(t)}}\Bigr)\,r^{(t)}[k]\,.
    \end{align*}
\end{theorem}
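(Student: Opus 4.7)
The plan is to piggy-back on~\Cref{thm:wgf_mixtures} by decomposing the Wasserstein{--}Fisher{--}Rao gradient flow into its orthogonal transport and reaction components. The $(\dot\lambda^{(t)}[k], \dot v^{(t)}[k])$ dynamics are then inherited directly from the pure Wasserstein flow, so only the weight dynamics $\dot r^{(t)}[k]$ will require additional work. Recall that the WFR metric tensor on $\cP(\man)$ decomposes infinitesimally at $P$ as $\norm{V}^2_{L^2(P;\,T\man)} + c\,\norm{\alpha}^2_{L^2(P)}$ on tangent vectors encoded by $\partial_t P = -\nabla\cdot(PV) + \alpha P$, where $c > 0$ is a convention-dependent constant. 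Correspondingly, the gradient flow of $\cF(P) \defeq \kl{\mu_P}{\pi}$ decouples into (i) the transport equation with velocity $V = -\grad_\man \psi$, which reproduces the $(\dot\lambda, \dot v)$ ODEs already derived in~\Cref{thm:wgf_mixtures}; and (ii) the reaction equation $\partial_t P = -c^{-1}\,(\psi - \langle\psi, P\rangle)\,P$, where $\psi \defeq \delta\cF/\delta P$ is the first variation and the centering enforces mass conservation on the probability simplex.

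Next, I would identify $\psi$ via the chain rule: since $P \mapsto \mu_P$ is affine with linearisation $\delta\mu_P/\delta P(\theta) = (T^\theta)_\sharp \rho$, and since $\delta\kl{\mu}{\pi}/\delta\mu = \log(\mu/\pi) + 1$, one obtains
\begin{equation*}
    \psi(\lambda, v) = \int \log \frac{\mu_P}{\pi} \circ T^{\lambda, v}\dd\rho + 1\,,
\end{equation*}
with the constant being immaterial after centering. For atomic $P^{(t)} = \sum_{k=1}^K w^{(t)}[k]\,\delta_{(\lambda^{(t)}[k], v^{(t)}[k])}$, the reaction equation preserves atom locations and evolves weights as $\dot w^{(t)}[k] = -c^{-1}(\psi(\lambda^{(t)}[k], v^{(t)}[k]) - \langle\psi, P^{(t)}\rangle)\,w^{(t)}[k]$; writing $r = \sqrt{w}$ then gives $\dot r^{(t)}[k] = -(2c)^{-1}(\psi(\lambda^{(t)}[k], v^{(t)}[k]) - \langle\psi, P^{(t)}\rangle)\,r^{(t)}[k]$, and the choice $c = 1/2$ matches the stated coefficient.

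To finish, I would simplify the centering term by Fubini: the definition $\mu_{P^{(t)}} = \int(T^\theta)_\sharp\rho\,P^{(t)}(\dd\theta)$ yields $\int\int f\circ T^\theta \dd\rho\dd P^{(t)}(\theta) = \int f\dd\mu_{P^{(t)}}$ for every test function $f$, and applying this with $f = \log(\mu_{P^{(t)}}/\pi)$ gives $\langle\psi, P^{(t)}\rangle = \int\log(\mu_{P^{(t)}}/\pi)\dd\mu_{P^{(t)}} + 1$. The two constants $+1$ cancel in the difference $\psi - \langle\psi, P^{(t)}\rangle$, reproducing exactly the parenthesised expression in the statement. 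The main obstacle will be pinning down the precise normalisation of the WFR metric tensor, that is, verifying that $c = 1/2$ matches the convention implicit in the theorem; this amounts to bookkeeping against the definitions in~\citet{LieMieSav16HK, Chietal18FR, LieMieSav18HK}, after which the derivation is essentially formal, modulo the standard justification of the particle-level ODE representation of an abstract gradient flow.
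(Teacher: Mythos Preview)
Your proposal is correct and follows essentially the same approach as the paper: both rely on the first variation $\delta\cF(P)(\lambda,v) = \int \log(\mu_P/\pi)\dd\mu_{\lambda,v} + 1$ (computed in the proof of~\Cref{thm:wgf_mixtures}) and then invoke the general WFR gradient flow structure. The paper's proof is a one-liner that simply cites this first variation and defers the WFR machinery to~\citet[Appendix H]{lambert2022variational}, whereas you spell out the transport/reaction decomposition, the weight dynamics, and the Fubini simplification explicitly; your acknowledged uncertainty about the normalisation constant $c$ is precisely what the citation to~\citet{lambert2022variational} is meant to settle.
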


We leave it as an open question to obtain convergence rates for this flow.

\section{Conclusion}

In this paper, we have put forth a definition of polyhedral subsets of the Wasserstein space, resting upon the notion of compatibility.
Compatibility induces an isometry with Euclidean geometry, allowing for the application of scalable first-order algorithms for convex optimization.
We then applied our framework to yield end-to-end guarantees for mean-field VI\@, showing first that the mean-field solution lies arbitrarily close to the KL minimizer over a polyhedral set, and then providing complexity guarantees for computing that KL minimizer.

Looking ahead, our work leaves open several questions which could be fruitful for future study.
Regarding our notion of Wasserstein polyhedra, one could
investigate statistical convergence guarantees as in \cite{gunsilius2022tangential}, or develop further interesting applications of polyhedral optimization.

As for our application to mean-field VI\@, one direction to explore would be to see if our analysis of SGD can be improved, e.g., by removing the boundedness assumption in~\ref{boundedness}.
Finally, another important question is to move beyond the well-conditioned setting.

\subsection*{Acknowledgements}
The authors are grateful to Jonathan Niles-Weed for helpful discussions at all stages of this work, as well as three reviewers that greatly improved the quality of our manuscript. YJ acknowledges support from NYU through the SURE program, where this project originated under the supervision of AAP. SC acknowledges the support of the Eric and Wendy Schmidt Fund at the Institute for Advanced Study. AAP acknowledges the support of Meta AI research, as well as the National Science Foundation through NSF Award 1922658.

\bibliography{main}

\begin{thebibliography}{}

\bibitem[Albergo et~al., 2024]{Alb+24StochInterpolants}
Albergo, M.~S., Boffi, N.~M., Lindsey, M., and Vanden-Eijnden, E. (2024).
\newblock Multimarginal generative modeling with stochastic interpolants.
\newblock In {\em The Twelfth International Conference on Learning
  Representations}.

\bibitem[Altschuler et~al., 2021]{altschuler2021averaging}
Altschuler, J.~M., Chewi, S., Gerber, P.~R., and Stromme, A.~J. (2021).
\newblock Averaging on the {B}ures--{W}asserstein manifold: dimension-free
  convergence of gradient descent.
\newblock In Ranzato, M., Beygelzimer, A., Nguyen, K., Liang, P.~S., Vaughan,
  J.~W., and Dauphin, Y., editors, {\em Advances in Neural Information
  Processing Systems}, volume~34, pages 22132--22145. Curran Associates, Inc.

\bibitem[Ambrosio et~al., 2005]{ambrosio2005gradient}
Ambrosio, L., Gigli, N., and Savar{\'e}, G. (2005).
\newblock {\em Gradient flows: in metric spaces and in the space of probability
  measures}.
\newblock Springer Science \& Business Media.

\bibitem[Arnese and Lacker, 2024]{arnese2024convergence}
Arnese, M. and Lacker, D. (2024).
\newblock Convergence of coordinate ascent variational inference for
  log-concave measures via optimal transport.
\newblock {\em arXiv preprint arXiv:2404.08792}.

\bibitem[Austin, 2019]{Aus19LowComplexity}
Austin, T. (2019).
\newblock The structure of low-complexity {G}ibbs measures on product spaces.
\newblock {\em Ann. Probab.}, 47(6):4002--4023.

\bibitem[Backhoff-Veraguas et~al., 2022]{Bacetal22Bary}
Backhoff-Veraguas, J., Fontbona, J., Rios, G., and Tobar, F. (2022).
\newblock Bayesian learning with {W}asserstein barycenters.
\newblock {\em ESAIM Probab. Stat.}, 26:436--472.

\bibitem[Basu et~al., 2014]{basu2014detecting}
Basu, S., Kolouri, S., and Rohde, G.~K. (2014).
\newblock Detecting and visualizing cell phenotype differences from microscopy
  images using transport-based morphometry.
\newblock {\em Proceedings of the National Academy of Sciences},
  111(9):3448--3453.

\bibitem[Beck, 2017]{beck2017first}
Beck, A. (2017).
\newblock {\em First-order methods in optimization}.
\newblock SIAM.

\bibitem[Bhattacharya et~al., 2023]{bhattacharya2023convergence}
Bhattacharya, A., Pati, D., and Yang, Y. (2023).
\newblock On the convergence of coordinate ascent variational inference.
\newblock {\em arXiv preprint arXiv:2306.01122}.

\bibitem[Bigot et~al., 2017]{bigot2017geodesic}
Bigot, J., Gouet, R., Klein, T., and L\'{o}pez, A. (2017).
\newblock Geodesic {PCA} in the {W}asserstein space by convex {PCA}.
\newblock {\em Ann. Inst. Henri Poincar\'{e} Probab. Stat.}, 53(1):1--26.

\bibitem[Bing et~al., 2023]{BinBunNil23Mixing}
Bing, X., Bunea, F., and Niles-Weed, J. (2023).
\newblock Estimation and inference for the {W}asserstein distance between
  mixing measures in topic models.
\newblock {\em arXiv preprint 2206.12768}.

\bibitem[Blei et~al., 2017]{blei2017variational}
Blei, D.~M., Kucukelbir, A., and McAuliffe, J.~D. (2017).
\newblock Variational inference: a review for statisticians.
\newblock {\em Journal of the American Statistical Association},
  112(518):859--877.

\bibitem[Boissard et~al., 2015]{boissard2015distribution}
Boissard, E., Le~Gouic, T., and Loubes, J.-M. (2015).
\newblock Distribution's template estimate with {W}asserstein metrics.
\newblock {\em Bernoulli}, 21(2):740--759.

\bibitem[Bonneel et~al., 2016]{bonneel2016wasserstein}
Bonneel, N., Peyr{\'e}, G., and Cuturi, M. (2016).
\newblock Wasserstein barycentric coordinates: histogram regression using
  optimal transport.
\newblock {\em ACM Trans. Graph.}, 35(4):71--1.

\bibitem[Brascamp and Lieb, 1976]{BraLie1976}
Brascamp, H.~J. and Lieb, E.~H. (1976).
\newblock On extensions of the {B}runn--{M}inkowski and
  {P}r\'{e}kopa--{L}eindler theorems, including inequalities for log concave
  functions, and with an application to the diffusion equation.
\newblock {\em J. Functional Analysis}, 22(4):366--389.

\bibitem[Brenier, 1991]{Bre91}
Brenier, Y. (1991).
\newblock Polar factorization and monotone rearrangement of vector-valued
  functions.
\newblock {\em Comm. Pure Appl. Math.}, 44(4):375--417.

\bibitem[Bubeck, 2015]{Bub15CvxOpt}
Bubeck, S. (2015).
\newblock Convex optimization: algorithms and complexity.
\newblock {\em Foundations and Trends® in Machine Learning}, 8(3-4):231--357.

\bibitem[Caffarelli, 2000]{caffarelli2000monotonicity}
Caffarelli, L.~A. (2000).
\newblock Monotonicity properties of optimal transportation and the {FKG} and
  related inequalities.
\newblock {\em Communications in Mathematical Physics}, 214(3):547--563.

\bibitem[Cai et~al., 2020]{cai2020linearized}
Cai, T., Cheng, J., Craig, N., and Craig, K. (2020).
\newblock Linearized optimal transport for collider events.
\newblock {\em Physical Review D}, 102(11):116019.

\bibitem[Cazelles et~al., 2018]{Cazetal18GeoPCA}
Cazelles, E., Seguy, V., Bigot, J., Cuturi, M., and Papadakis, N. (2018).
\newblock Geodesic {PCA} versus log-{PCA} of histograms in the {W}asserstein
  space.
\newblock {\em SIAM J. Sci. Comput.}, 40(2):B429--B456.

\bibitem[Chen et~al., 2018]{chen2018neural}
Chen, R. T.~Q., Rubanova, Y., Bettencourt, J., and Duvenaud, D.~K. (2018).
\newblock Neural ordinary differential equations.
\newblock In Bengio, S., Wallach, H., Larochelle, H., Grauman, K.,
  Cesa-Bianchi, N., and Garnett, R., editors, {\em Advances in Neural
  Information Processing Systems}, volume~31. Curran Associates, Inc.

\bibitem[Chen et~al., 2019]{CheGeoTan19GMM}
Chen, Y., Georgiou, T.~T., and Tannenbaum, A. (2019).
\newblock Optimal transport for {G}aussian mixture models.
\newblock {\em IEEE Access}, 7:6269--6278.

\bibitem[Chewi, 2024]{ChewiBook}
Chewi, S. (2024).
\newblock Log-concave sampling.
\newblock Book draft available at \url{https://chewisinho.github.io}.

\bibitem[Chewi et~al., 2021]{Chewietal21Splines}
Chewi, S., Clancy, J., Le~Gouic, T., Rigollet, P., Stepaniants, G., and
  Stromme, A.~J. (2021).
\newblock Fast and smooth interpolation on {W}asserstein space.
\newblock In Banerjee, A. and Fukumizu, K., editors, {\em Proceedings of the
  24th International Conference on Artificial Intelligence and Statistics},
  volume 130 of {\em Proceedings of Machine Learning Research}, pages
  3061--3069. PMLR.

\bibitem[Chewi et~al., 2020]{Chewietal20BWBary}
Chewi, S., Maunu, T., Rigollet, P., and Stromme, A. (2020).
\newblock Gradient descent algorithms for {B}ures--{W}asserstein barycenters.
\newblock In Abernethy, J. and Agarwal, S., editors, {\em Proceedings of Thirty
  Third Conference on Learning Theory}, volume 125 of {\em Proceedings of
  Machine Learning Research}, pages 1276--1304. PMLR.

\bibitem[Chewi and Pooladian, 2023]{ChePoo23Caffarelli}
Chewi, S. and Pooladian, A.-A. (2023).
\newblock An entropic generalization of {Caffarelli{\textquoteright}s}
  contraction theorem via covariance inequalities.
\newblock {\em Reports. Mathematical}, 361:1471--1482.

\bibitem[Chizat et~al., 2018]{Chietal18FR}
Chizat, L., Peyr\'{e}, G., Schmitzer, B., and Vialard, F.-X. (2018).
\newblock An interpolating distance between optimal transport and
  {F}isher--{R}ao metrics.
\newblock {\em Found. Comput. Math.}, 18(1):1--44.

\bibitem[Cuesta-Albertos et~al., 1996]{CueMatTue1996W2Lower}
Cuesta-Albertos, J.~A., Matr\'{a}n-Bea, C., and Tuero-Diaz, A. (1996).
\newblock On lower bounds for the {$L^2$}-{W}asserstein metric in a {H}ilbert
  space.
\newblock {\em J. Theoret. Probab.}, 9(2):263--283.

\bibitem[Cuturi, 2013]{cuturi2013sinkhorn}
Cuturi, M. (2013).
\newblock Sinkhorn distances: lightspeed computation of optimal transport.
\newblock {\em Advances in Neural Information Processing Systems}, 26.

\bibitem[Cuturi and Doucet, 2014]{cuturi2014fast}
Cuturi, M. and Doucet, A. (2014).
\newblock Fast computation of {W}asserstein barycenters.
\newblock In {\em International Conference on Machine Learning}, pages
  685--693. PMLR.

\bibitem[Dalalyan et~al., 2022]{DalKarRio22NonStrongly}
Dalalyan, A.~S., Karagulyan, A., and Riou-Durand, L. (2022).
\newblock Bounding the error of discretized {L}angevin algorithms for
  non-strongly log-concave targets.
\newblock {\em J. Mach. Learn. Res.}, 23:Paper No. 235, 38.

\bibitem[Deb et~al., 2021]{deb2021rates}
Deb, N., Ghosal, P., and Sen, B. (2021).
\newblock Rates of estimation of optimal transport maps using plug-in
  estimators via barycentric projections.
\newblock {\em Advances in Neural Information Processing Systems},
  34:29736--29753.

\bibitem[Delon and Desolneux, 2020]{DelDes20GMM}
Delon, J. and Desolneux, A. (2020).
\newblock A {W}asserstein-type distance in the space of {G}aussian mixture
  models.
\newblock {\em SIAM J. Imaging Sci.}, 13(2):936--970.

\bibitem[Diao et~al., 2023]{diao2023forward}
Diao, M.~Z., Balasubramanian, K., Chewi, S., and Salim, A. (2023).
\newblock Forward-backward {G}aussian variational inference via {JKO} in the
  {B}ures--{W}asserstein space.
\newblock In {\em International Conference on Machine Learning}, pages
  7960--7991. PMLR.

\bibitem[Divol et~al., 2022]{divol2022optimal}
Divol, V., Niles-Weed, J., and Pooladian, A.-A. (2022).
\newblock Optimal transport map estimation in general function spaces.
\newblock {\em arXiv preprint arXiv:2212.03722}.

\bibitem[Domke, 2020]{domke2020provable}
Domke, J. (2020).
\newblock Provable smoothness guarantees for black-box variational inference.
\newblock In {\em International Conference on Machine Learning}, pages
  2587--2596. PMLR.

\bibitem[Domke et~al., 2023]{domke2023provable}
Domke, J., Gower, R., and Garrigos, G. (2023).
\newblock Provable convergence guarantees for black-box variational inference.
\newblock In Oh, A., Neumann, T., Globerson, A., Saenko, K., Hardt, M., and
  Levine, S., editors, {\em Advances in Neural Information Processing Systems},
  volume~36, pages 66289--66327. Curran Associates, Inc.

\bibitem[Eldan, 2018]{Eldan18GradComplexity}
Eldan, R. (2018).
\newblock Gaussian-width gradient complexity, reverse log-{S}obolev
  inequalities and nonlinear large deviations.
\newblock {\em Geom. Funct. Anal.}, 28(6):1548--1596.

\bibitem[Eldan and Gross, 2018]{EldGro18Decomp}
Eldan, R. and Gross, R. (2018).
\newblock Decomposition of mean-field {G}ibbs distributions into product
  measures.
\newblock {\em Electron. J. Probab.}, 23:Paper No. 35, 24.

\bibitem[Finlay et~al., 2020a]{finlay2020learning}
Finlay, C., Gerolin, A., Oberman, A.~M., and Pooladian, A.-A. (2020a).
\newblock Learning normalizing flows from entropy-{K}antorovich potentials.
\newblock {\em arXiv preprint arXiv:2006.06033}.

\bibitem[Finlay et~al., 2020b]{finlay2020train}
Finlay, C., Jacobsen, J.-H., Nurbekyan, L., and Oberman, A. (2020b).
\newblock How to train your neural {ODE}: the world of {J}acobian and kinetic
  regularization.
\newblock In {\em International Conference on Machine Learning}, pages
  3154--3164. PMLR.

\bibitem[Frank and Wolfe, 1956]{frank1956algorithm}
Frank, M. and Wolfe, P. (1956).
\newblock An algorithm for quadratic programming.
\newblock {\em Naval Research Logistics Quarterly}, 3(1-2):95--110.

\bibitem[Gunsilius et~al., 2022]{gunsilius2022tangential}
Gunsilius, F., Hsieh, M.~H., and Lee, M.~J. (2022).
\newblock Tangential {W}asserstein projections.
\newblock {\em arXiv preprint arXiv:2207.14727}.

\bibitem[Hiriart-Urruty and Lemar{\'e}chal, 2004]{hiriart2004fundamentals}
Hiriart-Urruty, J.-B. and Lemar{\'e}chal, C. (2004).
\newblock {\em Fundamentals of convex analysis}.
\newblock Springer Science \& Business Media.

\bibitem[Huang et~al., 2021]{Huaetal21CvxFlows}
Huang, C.-W., Chen, R. T.~Q., Tsirigotis, C., and Courville, A. (2021).
\newblock Convex potential flows: universal probability distributions with
  optimal transport and convex optimization.
\newblock In {\em International Conference on Learning Representations}.

\bibitem[H\"{u}tter and Rigollet, 2021]{hutter2021minimax}
H\"{u}tter, J.-C. and Rigollet, P. (2021).
\newblock Minimax estimation of smooth optimal transport maps.
\newblock {\em Ann. Statist.}, 49(2):1166--1194.

\bibitem[Jaggi, 2013]{jaggi2013revisiting}
Jaggi, M. (2013).
\newblock Revisiting {F}rank--{W}olfe: projection-free sparse convex
  optimization.
\newblock In {\em International Conference on Machine Learning}, pages
  427--435. PMLR.

\bibitem[Jain et~al., 2019]{JaiRisKoe19MeanField}
Jain, V., Risteski, A., and Koehler, F. (2019).
\newblock Mean-field approximation, convex hierarchies, and the optimality of
  correlation rounding: a unified perspective.
\newblock In {\em S{TOC}'19---{P}roceedings of the 51st {A}nnual {ACM} {SIGACT}
  {S}ymposium on {T}heory of {C}omputing}, pages 1226--1236. ACM, New York.

\bibitem[Jordan et~al., 1998]{JKO}
Jordan, R., Kinderlehrer, D., and Otto, F. (1998).
\newblock The variational formulation of the {F}okker--{P}lanck equation.
\newblock {\em SIAM J. Math. Anal.}, 29(1):1--17.

\bibitem[Kent et~al., 2021]{kent2021frank}
Kent, C., Blanchet, J., and Glynn, P. (2021).
\newblock Frank--{W}olfe methods in probability space.
\newblock {\em arXiv preprint arXiv:2105.05352}.

\bibitem[Khurana et~al., 2023]{khurana2023supervised}
Khurana, V., Kannan, H., Cloninger, A., and Moosm{\"u}ller, C. (2023).
\newblock Supervised learning of sheared distributions using linearized optimal
  transport.
\newblock {\em Sampling Theory, Signal Processing, and Data Analysis}, 21(1):1.

\bibitem[Kim et~al., 2023]{kim2023black}
Kim, K., Oh, J., Wu, K., Ma, Y., and Gardner, J. (2023).
\newblock On the convergence of black-box variational inference.
\newblock In Oh, A., Neumann, T., Globerson, A., Saenko, K., Hardt, M., and
  Levine, S., editors, {\em Advances in Neural Information Processing Systems},
  volume~36, pages 44615--44657. Curran Associates, Inc.

\bibitem[Kolouri and Rohde, 2015]{kolouri2015transport}
Kolouri, S. and Rohde, G.~K. (2015).
\newblock Transport-based single frame super resolution of very low resolution
  face images.
\newblock In {\em Proceedings of the IEEE Conference on Computer Vision and
  Pattern Recognition}, pages 4876--4884.

\bibitem[Kolouri et~al., 2016]{kolouri2016continuous}
Kolouri, S., Tosun, A.~B., Ozolek, J.~A., and Rohde, G.~K. (2016).
\newblock A continuous linear optimal transport approach for pattern analysis
  in image datasets.
\newblock {\em Pattern Recognition}, 51:453--462.

\bibitem[Kuhn et~al., 2019]{kuhn2019wasserstein}
Kuhn, D., Esfahani, P.~M., Nguyen, V.~A., and Shafieezadeh-Abadeh, S. (2019).
\newblock Wasserstein distributionally robust optimization: theory and
  applications in machine learning.
\newblock In {\em Operations Research \& Management Science in the Age of
  Analytics}, pages 130--166. Informs.

\bibitem[Lacker, 2023]{lacker2023independent}
Lacker, D. (2023).
\newblock Independent projections of diffusions: gradient flows for variational
  inference and optimal mean field approximations.
\newblock {\em arXiv preprint arXiv:2309.13332}.

\bibitem[Lacker et~al., 2024]{lacker2024mean}
Lacker, D., Mukherjee, S., and Yeung, L.~C. (2024).
\newblock Mean field approximations via log-concavity.
\newblock {\em International Mathematics Research Notices}, 2024(7):6008--6042.

\bibitem[Lambert et~al., 2022]{lambert2022variational}
Lambert, M., Chewi, S., Bach, F., Bonnabel, S., and Rigollet, P. (2022).
\newblock Variational inference via {W}asserstein gradient flows.
\newblock {\em Advances in Neural Information Processing Systems},
  35:14434--14447.

\bibitem[Lavenant and Zanella, 2024]{lavenant2024convergence}
Lavenant, H. and Zanella, G. (2024).
\newblock Convergence rate of random scan coordinate ascent variational
  inference under log-concavity.
\newblock {\em arXiv preprint arXiv:2406.07292}.

\bibitem[Liero et~al., 2016]{LieMieSav16HK}
Liero, M., Mielke, A., and Savar\'{e}, G. (2016).
\newblock Optimal transport in competition with reaction: the
  {H}ellinger--{K}antorovich distance and geodesic curves.
\newblock {\em SIAM J. Math. Anal.}, 48(4):2869--2911.

\bibitem[Liero et~al., 2018]{LieMieSav18HK}
Liero, M., Mielke, A., and Savar\'{e}, G. (2018).
\newblock Optimal entropy-transport problems and a new
  {H}ellinger--{K}antorovich distance between positive measures.
\newblock {\em Invent. Math.}, 211(3):969--1117.

\bibitem[Lu et~al., 2019]{LuLuNol19BirthDeath}
Lu, Y., Lu, J., and Nolen, J. (2019).
\newblock Accelerating {L}angevin sampling with birth-death.
\newblock {\em arXiv preprint 1905.09863}.

\bibitem[Manole et~al., 2021]{manole2021plugin}
Manole, T., Balakrishnan, S., Niles-Weed, J., and Wasserman, L. (2021).
\newblock Plugin estimation of smooth optimal transport maps.
\newblock {\em arXiv preprint arXiv:2107.12364}.

\bibitem[Nemirovski and Yudin, 1983]{NemYud1983Complexity}
Nemirovski, A.~S. and Yudin, D.~B. (1983).
\newblock {\em Problem complexity and method efficiency in optimization}.
\newblock Wiley-Interscience Series in Discrete Mathematics. John Wiley \&
  Sons, Inc., New York.
\newblock Translated from the Russian and with a preface by E. R. Dawson.

\bibitem[Nesterov, 2018]{nesterov2018lectures}
Nesterov, Y. (2018).
\newblock {\em Lectures on convex optimization}, volume 137.
\newblock Springer.

\bibitem[Nesterov, 1983]{Nes1983Accel}
Nesterov, Y.~E. (1983).
\newblock A method for solving the convex programming problem with convergence
  rate {$O(1/k\sp{2})$}.
\newblock {\em Dokl. Akad. Nauk SSSR}, 269(3):543--547.

\bibitem[Otto, 2001]{otto2001geometry}
Otto, F. (2001).
\newblock The geometry of dissipative evolution equations: the porous medium
  equation.
\newblock {\em Comm. Partial Differential Equations}, 26(1-2):101--174.

\bibitem[Panaretos and Zemel, 2016]{PanZem16Point}
Panaretos, V.~M. and Zemel, Y. (2016).
\newblock Amplitude and phase variation of point processes.
\newblock {\em Ann. Statist.}, 44(2):771--812.

\bibitem[Panaretos and Zemel, 2020]{panaretos2020invitation}
Panaretos, V.~M. and Zemel, Y. (2020).
\newblock {\em An invitation to statistics in Wasserstein space}.
\newblock Springer Nature.

\bibitem[Park and Thorpe, 2018]{park2018representing}
Park, S. and Thorpe, M. (2018).
\newblock Representing and learning high dimensional data with the optimal
  transport map from a probabilistic viewpoint.
\newblock In {\em Proceedings of the IEEE Conference on Computer Vision and
  Pattern Recognition}, pages 7864--7872.

\bibitem[Peyr{\'e} and Cuturi, 2019]{PeyCut19}
Peyr{\'e}, G. and Cuturi, M. (2019).
\newblock Computational optimal transport.
\newblock {\em Foundations and Trends{\textregistered} in Machine Learning},
  11(5-6):355--607.

\bibitem[Pooladian and Niles-Weed, 2021]{pooladian2021entropic}
Pooladian, A.-A. and Niles-Weed, J. (2021).
\newblock Entropic estimation of optimal transport maps.
\newblock {\em arXiv preprint arXiv:2109.12004}.

\bibitem[Rockafellar, 1997]{rockafellar1997convexanalysis}
Rockafellar, R.~T. (1997).
\newblock {\em Convex analysis}.
\newblock Princeton Landmarks in Mathematics. Princeton University Press,
  Princeton, NJ.
\newblock Reprint of the 1970 original, Princeton Paperbacks.

\bibitem[Santambrogio, 2015]{San15}
Santambrogio, F. (2015).
\newblock Optimal transport for applied mathematicians.
\newblock {\em Birk{\"a}user, NY}, 55(58-63):94.

\bibitem[Villani, 2009]{Vil08}
Villani, C. (2009).
\newblock {\em Optimal transport: old and new}, volume 338.
\newblock Springer.

\bibitem[Villani, 2021]{villani2021topics}
Villani, C. (2021).
\newblock {\em Topics in optimal transportation}, volume~58.
\newblock American Mathematical Soc.

\bibitem[Wainwright and Jordan, 2008]{wainwright2008graphical}
Wainwright, M.~J. and Jordan, M.~I. (2008).
\newblock Graphical models, exponential families, and variational inference.
\newblock {\em Foundations and Trends{\textregistered} in Machine Learning},
  1(1--2):1--305.

\bibitem[Wang et~al., 2013]{wang2013linear}
Wang, W., Slep{\v{c}}ev, D., Basu, S., Ozolek, J.~A., and Rohde, G.~K. (2013).
\newblock A linear optimal transportation framework for quantifying and
  visualizing variations in sets of images.
\newblock {\em International Journal of Computer Vision}, 101:254--269.

\bibitem[Werenski et~al., 2022]{werenski2022measure}
Werenski, M., Jiang, R., Tasissa, A., Aeron, S., and Murphy, J.~M. (2022).
\newblock Measure estimation in the barycentric coding model.
\newblock In {\em International Conference on Machine Learning}, pages
  23781--23803. PMLR.

\bibitem[Wibisono, 2018]{Wib18SamplingOpt}
Wibisono, A. (2018).
\newblock Sampling as optimization in the space of measures: the {L}angevin
  dynamics as a composite optimization problem.
\newblock In {\em Proceedings of the 31st Conference on Learning Theory},
  volume~75 of {\em Proceedings of Machine Learning Research}, pages
  2093--3027. PMLR.

\bibitem[Yao and Yang, 2022]{yao2022mean}
Yao, R. and Yang, Y. (2022).
\newblock Mean field variational inference via {W}asserstein gradient flow.
\newblock {\em arXiv preprint arXiv:2207.08074}.

\bibitem[Yi and Liu, 2023]{yi2023bridging}
Yi, M. and Liu, S. (2023).
\newblock Bridging the gap between variational inference and {W}asserstein
  gradient flows.
\newblock {\em arXiv preprint arXiv:2310.20090}.

\bibitem[Yue et~al., 2022]{yue2022linear}
Yue, M.-C., Kuhn, D., and Wiesemann, W. (2022).
\newblock On linear optimization over {W}asserstein balls.
\newblock {\em Mathematical Programming}, 195(1-2):1107--1122.

\bibitem[Zemel and Panaretos, 2019]{ZemPan19Frechet}
Zemel, Y. and Panaretos, V.~M. (2019).
\newblock Fr\'{e}chet means and {P}rocrustes analysis in {W}asserstein space.
\newblock {\em Bernoulli}, 25(2):932--976.

\bibitem[Zhang and Zhou, 2020]{zhang2020theoretical}
Zhang, A.~Y. and Zhou, H.~H. (2020).
\newblock Theoretical and computational guarantees of mean field variational
  inference for community detection.
\newblock {\em The Annals of Statistics}, 48(5):2575--2598.

\bibitem[Zhu et~al., 1997]{zhu1997algorithm}
Zhu, C., Byrd, R.~H., Lu, P., and Nocedal, J. (1997).
\newblock Algorithm 778: {L-BFGS-B}: {F}ortran subroutines for large-scale
  bound-constrained optimization.
\newblock {\em ACM Transactions on Mathematical Software (TOMS)},
  23(4):550--560.

\end{thebibliography}

\pagebreak
\appendix

\section{Proofs for Section~\ref{sec: wass_polytopes}}\label{app: main_proofs}

\begin{proof}[Proof of \cref{lem: mut_diag}]
Take $T_1(x) = A_1x$ and $T_2(x) = A_2x $ for $A_1,A_2$ positive definite, and mutually diagonalizable: there exists an orthogonal matrix $U$ such that $A_i = U \Lambda_i U^{-1}$ with $\Lambda_i$ diagonal with positive entries. Then
\begin{align*}
    (T_1 \circ (T_2)^{-1})(x) = U \Lambda_1 U^{-1}\,(U \Lambda_2 U^{-1} )^{-1}\,x = U \Lambda_1 \Lambda_2^{-1} U^{-1} x = \tilde{A}x\,,
\end{align*}
with $\tilde{A} \succ 0$; this completes the claim.
\end{proof}

\begin{proof}[Proof of \cref{lem: radial}]
    See~\citet[Section 2.3.2]{panaretos2020invitation}.
\end{proof}

\begin{proof}[Proof of \cref{lem: 1d_maps}]
Let $S, T \in \cM$, and for simplicity assume they are strictly increasing. Note that $T^{-1}$ is also strictly increasing, so $S \circ T^{-1}$ is strictly increasing.
\end{proof}

\begin{proof}[Proof of \cref{lem: direct_sum}]
Take $S_1, T_1 \in \cM_1$ and $S_2, T_2 \in \cM_2$. Take $(x,y) \in \R^{d_1\times d_2}$, and write $S(x,y) = (S_1(x),S_2(y))$, and similarly for $T$. Since each of $S_1 \circ T_1^{-1}$ and $S_2 \circ T_2^{-1}$ are gradients of convex functions, then $S \circ T^{-1} = (S_1 \circ T_1^{-1}, S_2 \circ T_2^{-1})$ is also the gradient of a convex (and separable) function.
\end{proof}

\begin{proof}[Proof of \cref{lem:add_id}]
For any $T \in \cM$, $T$ and $T^{-1}$ are both gradients of convex functions, so the claim is immediate.    
\end{proof}
\begin{proof}[Proof of \cref{lem:translations}]
Suppose $T_1,T_2 \in \cM$ are compatible i.e., $T_1 \circ (T_2)^{-1}$ is the gradient of a convex function. Write $\tilde{T}_1 = \nabla\tilde{\phi}_1 = \nabla(\phi_1 + \langle u, \cdot\rangle)$ and $\tilde{T}_2 = \nabla\tilde{\phi}_2 = \nabla(\phi_2 + \langle v, \cdot\rangle)$. One can check that $\tilde{\phi}_2^*(y) = \phi_2^*(y-v)$, and then by convex duality $(\tilde{T}_2)^{-1} = \nabla \phi_2^*(\cdot-v) $ is the gradient of a convex function. So,
\begin{align*}
    \tilde{T}_1(\tilde{T}^{-1}_2(y)) = \nabla\phi_1(\nabla\phi_2^*(y-v)) + u\,,
\end{align*}
which is the gradient of a sum of convex functions.
\end{proof}

\begin{proof}[Proof of \cref{lem: cones}]
For $\eta, \lambda \in \R^{|\cM|}_+$, write $S^\eta = \sum_{S \in \cM}\eta_S S$ and $T^\lambda = \sum_{T \in \cM}\lambda_T T$ in $\coneM$. Assume $\eta, \lambda \ne 0$ or otherwise the statement is trivial. The composition reads
\begin{align*}
    T^\lambda \circ (S^\eta)^{-1} = \textstyle\sum_{T \in \cM}\lambda_T T \circ \left(\textstyle\sum_{S \in \cM}\eta_S S\right)^{-1}\,,
\end{align*}
so it suffices to show that $\tilde{T} \defeq T \circ \left(\textstyle\sum_{S \in \cM}\eta_S S\right)^{-1}$ is the gradient of a convex function. Since each $S \in \cM$ is the gradient of a convex function, we have that 
\begin{align*}
    \tilde{T}^{-1} = \bigl(\sum_{S \in \cM} \eta_S S\bigr) \circ T^{-1} = \sum_{S \in \cM} \eta_S\, (S \circ T^{-1})\,.
\end{align*}
Since $\tilde{T}^{-1}$ is the gradient of a convex function, by conjugacy, it holds that $\tilde{T}$ is the gradient of a convex function.
\end{proof}

\section{Proofs for Section~\ref{sec:opt_algs}}\label{app: sgd_general}

\begin{proof}[Proof of \cref{thm: sgd_theorem}]
For an iteration number $t \in \NN$, we use the shorthand $\hat\nabla_\lambda \cF_t \defeq \hat\nabla_\lambda {\cF}(\mu_{\lambda^{(t)}})$, and similarly for the true gradient. 

Since projections are contractive, a first manipulation gives
\begin{align*}
    \|\lambda^{(t+1)} - \lambda^\star\|^2_Q \leq \|\lambda^{(t)} - \lambda^\star\|^2_Q + h^2\, \|Q^{-1}\,\hat\nabla_\lambda {\cF}_t\|_Q^2 + 2h\, \langle \hat\nabla_\lambda {\cF}_t, \lambda^\star - \lambda^{(t)} \rangle\,.
\end{align*}
Taking expectations conditioned on $\lambda^{(t)}$ yields, by linearity,
\begin{align*}
    \E_t\|\lambda^{(t+1)} - \lambda^\star\|^2_Q
    &\leq \|\lambda^{(t)} - \lambda^\star\|^2_Q + h^2\, \E_t\|Q^{-1}\,\hat\nabla_\lambda {\cF}_t\|_Q^2 + 2h\, \langle \nabla_\lambda {\cF}_t, \lambda^\star - \lambda^{(t)} \rangle\,.
\end{align*}
By $m$-strong convexity of $\cF$, we obtain
\begin{align*}
    \E_t\|\lambda^{(t+1)} - \lambda^\star\|^2_Q
    &\leq (1 - mh)\,\|\lambda^{(t)} - \lambda^\star\|^2_Q + h^2\, \E_t\|Q^{-1}\,\hat\nabla_\lambda {\cF}_t\|_Q^2 + 2h\,(\cF(\mu_\star) - \cF(\mu_{\lambda^{(t)}})) \\
    &\leq  (1 - 2mh)\|\lambda^{(t)} - \lambda^\star\|^2_Q + h^2\, \E_t\|Q^{-1}\,\hat\nabla_\lambda {\cF}_t\|_Q^2\,.
\end{align*}
Taking expectations again, 
\begin{align*}
    \E\|\lambda^{(t+1)} - \lambda^\star\|^2_Q
    &\leq  (1 - 2mh)\,\E\|\lambda^{(t)} - \lambda^\star\|^2_Q + h^2\, \E[\E_t\|Q^{-1}\,\hat\nabla_\lambda {\cF}_t\|_Q^2]\,.
\end{align*}
Adding and subtracting the true gradient at iterate $\lambda^{(t)}$, written $Q^{-1}\,\nabla_\lambda \mathcal{F}_t$, the second term can be bounded via smoothness of $\mathcal{F}$:
\begin{align*}
    h^2\,\E[\E_t\|Q^{-1}\,\hat\nabla_\lambda {\cF}_t\|_Q^2]
    &\leq 2h^2\,\E[\E_t\|Q^{-1}\,(\hat\nabla_\lambda {\cF}_t - \nabla_\lambda {\cF}_t)\|_Q^2] + 2h^2\,\E\|Q^{-1}\,\nabla_\lambda {\cF}_t\|^2_Q \\
    &\leq 2h^2\,\E[\E_t\|Q^{-1}\,(\hat\nabla_\lambda {\cF}_t - \nabla_\lambda {\cF}_t )\|_Q^2] + 2M^2 h^2\,\E\|\lambda^{(t)} - \lambda^\star\|^2_Q\,. 
\end{align*}
Combining this with our previous bound results in
\begin{align*}
    \E\|\lambda^{(t+1)} - \lambda^\star\|^2_Q
    &\leq  (1 - 2mh + 2M^2 h^2)\,\E\|\lambda^{(t)} - \lambda^\star\|^2_Q + 2h^2\, \E[\E_t\|Q^{-1}\,(\hat\nabla_\lambda {\cF}_t - \nabla_\lambda {\cF}_t )\|_Q^2] \\
    &\leq (1-mh)\,\E\|\lambda^{(t)} - \lambda^\star\|^2_Q + 2h^2\, \E[\E_t\|Q^{-1}\,(\hat\nabla_\lambda {\cF}_t - \nabla_\lambda {\cF}_t )\|_Q^2]\,,
\end{align*}
where in the last step we took $h \leq \frac{1}{2 \kappa M}$.

By \ref{var_bound}, we obtain
\begin{align*}
    \E\|\lambda^{(t+1)} - \lambda^\star\|^2_Q
    &\leq  (1 - mh + c_1 h^2)\,\E\|\lambda^{(t)} - \lambda^\star\|^2_Q + c_0 h^2\,.
\end{align*}
If $c_1h^2 \leq mh/2$, i.e., $h \leq m/(2c_1)$, then 
\begin{align*}
    \E\|\lambda^{(t+1)} - \lambda^\star\|^2_Q
    &\leq  (1 - mh/2)\,\E\|\lambda^{(t)} - \lambda^\star\|^2_Q +  c_0 h^2\,.
\end{align*}
Iterating this bound gives
\begin{align*}
    \E\|\lambda^{({t})} - \lambda^\star\|^2_Q
    &\leq  (1 - \tfrac{mh}{2})^t\,\|\lambda^{(0)} - \lambda^\star\|^2_Q + \frac{2c_0 h}{m} \\
    &\leq \exp(-mht/2)\,\|\lambda^{(0)} - \lambda^\star\|^2_Q + \frac{2c_0 h}{m}\,.
\end{align*}
Choosing $h \asymp m\eps^2/c_0$ concludes the proof.
\end{proof}

\section{Proofs for Section~\ref{sec: mfvi_main}}\label{app: vi_proofs}

\subsection{Proofs for Section~\ref{sec: mfvi}}\label{app:mf_eq_pf}

To derive the mean-field equations, we recall that the KL divergence is
\begin{align*}
    \kl{\mu}{\pi}
    &= \int V\dd\mu + \int \log \mu\dd \mu + \log Z\,.
\end{align*}
Over the space of product measures, we obtain the functional
\begin{align*}
    (\mu_1,\dotsc,\mu_d)
    &\mapsto \KL\Bigl(\bigotimes_{i=1}^d \mu_i \Bigm\Vert \pi\Bigr)
    = \int V\dd\bigotimes_{i=1}^d \mu_i + \sum_{i=1}^d \int \log\mu_i\dd\mu_i + \log Z\,.
\end{align*}
If we take the first variation of this functional~\citep[c.f.][Section 7.2]{San15} w.r.t.\ $\mu_i$, we obtain the equation
\begin{align*}
    \Bigl[\delta_{\mu_i}\KL\Bigl(\bigotimes_{j=1}^d \mu_j \Bigm\Vert \pi\Bigr)\Bigr](x_i)
    &= \int V(x_1,\dotsc,x_d) \, \bigotimes_{j\ne i} \mu_j(\!\dd x_j) + \log\mu_i(x_i) + \text{const}\,.
\end{align*}
At optimality, the first variation must equal a constant, which leads to
\begin{align*}
    \pi_i^\star(x_i)
    &\propto\exp\Bigl(-\int V(x_1,\dotsc,x_d) \,\bigotimes_{j\ne i} \pi_j^\star(\!\dd x_j)\Bigr)\,.
\end{align*}

\subsection{Proofs for Section~\ref{sec:regularity}}\label{app:regularity_proofs}

In this section, we prove the regularity bounds on the optimal transport maps given as~\Cref{thm:regularity}.
Recall that $\pi^\star$ denotes the mean-field VI solution and $T^\star$ is the optimal transport map from $\rho$ to $\pi^\star$.
Let $\pi^\star_i$ and $T^\star_i$ denote the $i$-th components respectively, and recall also from~\eqref{eq: mf_equations} that $\pi^\star_i \propto\exp(-V_i)$, where
\begin{align*}
    V_i(x_i)
    &\defeq \int V(x_1,\dotsc,x_d)\,\bigotimes_{j\ne i} \pi^\star_j(\!\dd x_j)\,.
\end{align*}
We begin with a few simple lemmas which show that $T_i^\star(0)$, the mean of $\pi_i^\star$, and the mode of $\pi_i^\star$ are all close to each other.

\begin{lemma}\label{lem:T_near_mean}
    Let $T$ denote the optimal transport map from $\rho = \cN(0,1)$ to $\mu$, and let $m$ denote the mean of $\mu$. If $T' \leq \beta$, then $|T(0) - m| \leq \sqrt{2/\uppi}\,\beta$.
\end{lemma}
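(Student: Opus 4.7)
The plan is very short: exploit the fact that the pushforward identity writes the mean of $\mu$ as an expectation of $T$ under $\rho$, then use the derivative bound to turn the deviation $|T(0)-m|$ into a Lipschitz estimate against $|X|$.

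Concretely, let $X\sim\rho = \cN(0,1)$. Since $T_\sharp \rho = \mu$, the mean of $\mu$ is $m = \E[T(X)]$. Therefore
\begin{align*}
    |T(0) - m| = |\E[T(0) - T(X)]| \le \E|T(X) - T(0)|\,.
\end{align*}
The derivative bound $T' \le \beta$ (together with $T' \ge 0$, since $T$ is an increasing rearrangement) gives $|T(X) - T(0)| \le \beta\,|X|$ by the fundamental theorem of calculus. Using the standard Gaussian absolute moment $\E|X| = \sqrt{2/\uppi}$, we conclude
\begin{align*}
    |T(0) - m| \le \beta\,\E|X| = \sqrt{2/\uppi}\,\beta\,.
\end{align*}

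There is essentially no obstacle here; the only thing worth being careful about is that the derivative bound $T' \le \beta$ is interpreted almost everywhere (which is fine since $T$, as the gradient of a convex potential on $\R$, is monotone and differentiable a.e.), so the integral form $T(X) - T(0) = \int_0^X T'(s)\,ds$ is justified and yields the Lipschitz estimate. The rest is the computation of $\E|X|$ for a standard normal.
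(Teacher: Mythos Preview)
Your proof is correct and matches the paper's own proof essentially line for line: both write $m = \E[T(Z)]$ for $Z\sim\cN(0,1)$, bound $|T(0)-m| \le \beta\,\E|Z|$ via the Lipschitz estimate from $T'\le\beta$, and plug in $\E|Z| = \sqrt{2/\uppi}$. Your added remark about a.e.\ differentiability of the monotone map is a harmless extra justification that the paper omits.
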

\begin{proof}
    Let $Z \sim \cN(0,1)$, so that $T(Z) \sim \mu$ and $m \defeq \E T(Z)$. Since $T' \leq \beta$, 
\begin{align*}
    |T(0) - m|
    &= |\E(T(0) - T(Z))| \leq \beta\, \E|Z| = \sqrt{\frac{2}{\uppi}}\,\beta\,.\qedhere
\end{align*}
\end{proof}

\begin{lemma}\label{lem:mean_near_mode}
    Let $m$ and $\tilde m$ denote the mean and the mode of $\mu$, respectively, where $\mu$ is $\ell_V$-strongly log concave and univariate.
    Then, $|m-\tilde m| \le 1/\sqrt{\ell_V}$.
\end{lemma}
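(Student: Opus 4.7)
The plan is to reduce the bound $|m - \tilde m| \le 1/\sqrt{\ell_V}$ to a second-moment estimate of $\mu$ around the mode. Writing $\mu \propto \exp(-W)$ with $W'' \ge \ell_V$, the mode satisfies $W'(\tilde m) = 0$. Jensen's inequality applied to $y \mapsto y^2$ gives $|m - \tilde m|^2 = |\E_\mu[X - \tilde m]|^2 \le \E_\mu[(X - \tilde m)^2]$, so it suffices to prove $\E_\mu[(X - \tilde m)^2] \le 1/\ell_V$.

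To establish this second-moment bound, I would combine two observations. First, the strong convexity of $W$ makes $W'$ strongly monotone, so for every $x \in \R$,
\begin{align*}
    (x - \tilde m)\,W'(x) = (x - \tilde m)\,\bigl(W'(x) - W'(\tilde m)\bigr) \ge \ell_V\,(x - \tilde m)^2\,.
\end{align*}
Taking expectation under $\mu$ yields $\E_\mu[(X - \tilde m)\,W'(X)] \ge \ell_V\, \E_\mu[(X - \tilde m)^2]$. Second, the left-hand side equals $1$ by a Stein-type integration by parts: since $\mu'(x) = -W'(x)\,\mu(x)$,
\begin{align*}
    \E_\mu[(X - \tilde m)\,W'(X)] = -\int (x - \tilde m)\,\mu'(x)\dd x = \int \mu(x)\dd x = 1\,,
\end{align*}
where the boundary terms vanish thanks to the sub-Gaussian tail decay of $\mu$ guaranteed by strong log-concavity. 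Combining the two bounds produces $\ell_V\, \E_\mu[(X - \tilde m)^2] \le 1$, and the claim follows.

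The only mildly delicate point is the vanishing of the boundary terms at $\pm\infty$, which is routine given the bound $\mu(x) \lesssim \exp(-\ell_V(x - \tilde m)^2/2)$ that already underlies the hypothesis of strong log-concavity. Everything else is a short algebraic manipulation.
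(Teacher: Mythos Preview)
Your proof is correct. The paper does not give an argument of its own here: it simply cites \citet[Proposition 4]{DalKarRio22NonStrongly} as a standard consequence of strong log-concavity. Your approach---bounding $|m-\tilde m|^2$ by the second moment around the mode via Jensen, then combining the strong monotonicity inequality $(x-\tilde m)\,W'(x)\ge \ell_V\,(x-\tilde m)^2$ with the Stein-type identity $\E_\mu[(X-\tilde m)\,W'(X)]=1$---is a clean, self-contained argument that supplies exactly the details the paper omits by citation. The boundary-term justification you flag is indeed routine under strong log-concavity.
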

\begin{proof}
    This is a standard consequence of strong log-concavity, see, e.g.,~\citet[Proposition 4]{DalKarRio22NonStrongly}.
\end{proof}

We are now ready to prove~\Cref{thm:regularity}.

\begin{proof}[Proof of~\Cref{thm:regularity}]
As the main text contains the proof of the bounds on the first derivative of $T$, we continue with the second and third derivative bounds.

We, obviously, start with the second derivative bounds. Recall the Monge--Amp{\`e}re equation (or the change of variables formula) yields
\begin{align}\label{eq: 1d_mongeampere}
    \log \pi_i^\star \circ T_i^\star (x) = -\frac{x^2}{2} - \log (T_i^\star)'(x) - \tfrac12\log(2\uppi)\,.
\end{align}
Differentiating once yields
\begin{align}\label{eq:monge_ampere_deriv}
(\log \pi^\star_i \circ T^\star_i)'(x)  = {-}V_i'(T^\star_i(x))\, (T_i^\star)'(x) = - x - \frac{(T_i^\star)''(x)}{(T_i^\star)'(x)}\,.
\end{align}
Rearranging to isolate $(T_i^\star)''$ gives
\begin{align}\label{eq:reg_pf_1}
    (T_i^\star)''(x) = -(T_i^\star)'(x)\,\bigl(x {-} V_i'(T_i^\star(x))\, (T^\star_i)'(x)\bigr)\,.
\end{align}
Let $m_i^\star$ and $\tilde m_i^\star$ denote the mean and mode of $\pi_i^\star$ respectively.
Recall also that $0 < 1/\sqrt{L_V} \leq (T_i^\star)' \leq 1/\sqrt{\ell_V}$.
By~\Cref{lem:T_near_mean} and~\Cref{lem:mean_near_mode},
\begin{align*}
    |V_i'(T_i^\star(x))|
    &\le {\underbrace{|V_i'(\tilde m_i^\star)|}_{=0}} + L_V\,|T_i^\star(x) - \tilde m_i^\star| \\
    &\le L_V\,(|T_i^\star(x) - T_i^\star(0)| + |T_i^\star(0) - m_i^\star| + |m_i^\star - \tilde m_i^\star|) \\
    &\le L_V \, \bigl(\frac{1}{\sqrt{\ell_V}}\,|x| + \sqrt{\frac{2}{\uppi}}\,\frac{1}{\sqrt{\ell_V}} + \frac{1}{\sqrt{\ell_V}} \bigr)
    \lesssim \frac{L_V}{\sqrt{\ell_V}}\,(1+|x|)\,.
\end{align*}
Substituting this into~\eqref{eq:reg_pf_1}, we obtain
\begin{align*}
    |(T_i^\star)''(x)|
    &\lesssim \frac{1}{\sqrt{\ell_V}}\,\bigl(|x| + \frac{L_V}{\ell_V}\,(1+|x|)\bigr)
    \lesssim \frac{\kappa}{\sqrt{\ell_V}}\,(1+|x|)\,.
\end{align*}

For the third derivative control, we differentiate~\eqref{eq:monge_ampere_deriv} again to yield
\begin{align*}
    (\log \pi_i^\star \circ T_i^\star)''(x)
    &= {-} \bigl(V_i''(T^\star_i(x))\,(T_i^\star)'(x)^2 + V_i'(T^\star_i(x))\,(T_i^\star)''(x)\bigr)\\
    &= -1 - \frac{(T_i^\star)'''(x)\, (T_i^\star)'(x) - (T_i^\star)''(x)^2}{(T_i^\star)'(x)^2}\,.
\end{align*}
Again, we rearrange and isolate, giving
\begin{align*}
    (T_i^\star)'''(x)
    &= \frac{(T_i^\star)''(x)^2}{(T_i^\star)'(x)} - (T_i^\star)'(x)\, \bigl(1 { - } V_i''(T_i^\star(x))\, (T_i^\star)'(x)^2 { - } V_i'(T_i^\star(x))\,(T_i^\star)''(x)\bigr)\,.
\end{align*}
Taking absolute values, we can collect the terms one by one:
\begin{align*}
    |(T_i^\star)''(x)^2/(T_i^\star)'(x)|
    &\lesssim \frac{\kappa^{{2}}}{\sqrt{\ell_V}}\,(1+|x|^2)\,, \\
    |V_i''(T^\star_i(x))\,(T_i^\star)'(x)^2|
    &\leq \kappa\,,\\
    |V_i'(T_i^\star(x))\,(T_i^\star)''(x)| 
    &\lesssim \frac{L_V}{\sqrt{\ell_V}}\,(1+|x|)\cdot \frac{\kappa}{\sqrt{\ell_V}}\,(1 + |x|) \lesssim \kappa^2\,(1 + |x|^2)\,.
\end{align*}
{To obtain the first bound, note that by~\eqref{eq:reg_pf_1} and the subsequent calculations, we have $|(T_i^\star)''(x)/(T_i^\star)'(x)| \lesssim \kappa\,(1+|x|)$. Square and use $(T_i^\star)'(x) \le 1/\sqrt{\ell_V}$.}
Hence, the final bound scales as
\begin{align*}
    |(T^\star_i)'''(x)|
    &\lesssim \frac{\kappa^2}{\sqrt{\ell_V}}\,(1 + |x|^2)\,. \qedhere
\end{align*}
\end{proof}

\subsection{Proofs for Section~\ref{sec: mfvi_approx}}\label{app:approx_proofs}

For our approximation results, we begin with a simple construction via piecewise linear maps.
Let $R > 0$ denote a truncation parameter, and partition the interval $[-R, +R]$ into sub-intervals of length $\delta > 0$.
Let $\psi$ be the elementary step function
\begin{align*}
    \psi : \R\to\R\,, \qquad \psi(x) \defeq \begin{cases}
        0\,, & x \le 0\,, \\
        x\,, & x \in [0,1]\,, \\
        1\,, & x \ge 1\,.
    \end{cases}
\end{align*}
We then define the following family of compatible maps:
\begin{align*}
    \cM
    &\defeq \bigl\{ x \mapsto \psi(\delta^{-1}\,(x_i - a))\, e_i \bigm\vert i \in [d], \; I = [a,a+\delta]~\text{is a sub-interval} \bigr\}\,.
\end{align*}
We suppress the dependence on the parameters $R$, $\delta$ in the notation.

\begin{proof}[Proof of~\Cref{thm: pi_closeness}]
    Owing to the isometry, we wish to show that we can find a map $\hat T \in \augtip$, with $\alpha = 1/\sqrt{L_V}$, such that
    \begin{align}\label{eq:desired_approx}
        \|\bar T - \hat T\|_{L^2(\rho)}^2 \le {\varepsilon^2/\ell_V}\qquad\text{and}\qquad\|D(\bar T - \hat T)\|_{L^2(\rho)}^2 \le {\varepsilon_1^2/\ell_V}\,.
    \end{align}
    Here, $\|D(\bar T - \hat T)\|_{L^2(\rho)}^2 \defeq \int \|D(\bar T - \hat T)\|_{\rm F}^2 \dd \rho$.
    
    We first make a series of reductions.
    By assumption, $D\bar T \succeq \alpha I$, and by definition, $\hat T$ is of the form $\alpha \,{\id} + \sum_{T\in\cM} \lambda_T T + v$.
    By replacing $\bar T$ with $\bar T - \alpha\id$, it suffices to prove the following statement: assuming that $0 \preceq D\bar T \preceq \ell_V^{-1/2}\,I$ together with the second derivative bound on $\bar T$, there exists $\hat T$ of the form $\sum_{T\in\cM} \lambda_T T + v$ such that~\eqref{eq:desired_approx} holds.
    However, from the structure of $\cM$, the problem now separates across the coordinates and it suffices to prove this statement with $d=1$ {and $\varepsilon$ replaced with $\varepsilon/\sqrt d$}.

    \textbf{Truncation procedure.} We will construct $\hat T$ so that $\bar T(-R) = \hat T(-R)$ and $\bar T(+R) = \hat T(+R)$.
    Assuming that this holds, the bound on $\bar T'$ and the fact that $\hat T$ is constant on $(-\infty, -R]$ and on $[+R, +\infty)$ readily imply
    \begin{align*}
        |\bar T(x) - \hat T(x)| \le \frac{1}{\sqrt{\ell_V}}\,(|x|-R)\,, \qquad\text{for}~|x|\ge R\,.
    \end{align*}
    The error contributed by the tails is therefore bounded by 
    \begin{align*}
        \int_{\R \setminus (-R, +R)} |\bar T - \hat T|^2 \dd \rho
        \le \frac{1}{\sqrt{2\uppi}\,\ell_V} \int_{\R\setminus (-R, +R)} {(|x|-R)}^2 \exp(-x^2/2) \dd x\,.
    \end{align*}
    Similarly, 
    \begin{align*}
        |\bar T'(x) - \hat T'(x)| \le 1/\sqrt{\ell_V}\,, \qquad\text{for}~|x|\ge R\,,
    \end{align*}
    which gives
    \begin{align*}
        \int_{\R \setminus (-R, +R)} |\bar T' - \hat T'|^2 \dd \rho
        &\le \frac{1}{\sqrt{2\uppi}\,\ell_V} \int_{\R\setminus (-R, +R)} \exp(-x^2/2) \dd x\,.
    \end{align*}
    
    Standard Gaussian tail bounds and the Cauchy{--}Schwarz inequality imply that with the choice $R\asymp \sqrt{\log(1/(\ell_V \varepsilon^2))}$, we obtain {$\|\bar T - \hat T\|_{L^2(\tilde\rho)}^2 
 \vee \|\bar T' - \hat T'\|_{L^2(\tilde\rho)}^2 \lesssim \varepsilon^2$, where $\tilde\rho$ is the Gaussian measure restricted to the set $\R \setminus [-R,R]$.}

    \textbf{Uniform approximation over a compact domain.}
    We now show that $\hat T$ can be chosen to uniformly approximate $\bar T$ on $[-R,+R]$.
    Indeed, we take
    \begin{align*}
        \hat T(x)
        &= \bar T(-R) + \sum_{m=0}^{2R/\delta-1} \lambda_m \psi\Bigl( \frac{x - (-R + m\delta)}{\delta}\Bigr)\,,
    \end{align*}
    where the $\lambda_m$ are chosen so that $\bar T$ and $\hat T$ agree at each of the endpoints of the sub-intervals of size $\delta$.
    Consider such a sub-interval $I = [a,a+\delta]$.
    Then, for $x\in I$,
    \begin{align*}
        |\bar T(x) - \hat T(x)|
        &= \Bigl\lvert \bar T(x) - \bar T(a) - \frac{\bar T(a+\delta) - \bar T(a)}{\delta}\,(x-a) \Bigr\rvert\,.
    \end{align*}
    By the mean value theorem, $\bar T(x) = \bar T(a) + \bar T'(c_1)\,(x-a)$ and $\bar T(a+\delta) = \bar T(a) + \bar T'(c_2) \, \delta$ for some $c_1, c_2 \in I$. Together with the second derivative bound on $\bar T$, it yields 
    \begin{align*}
        |\bar T(x) - \hat T(x)|
        &= |(\bar T'(c_1) - \bar T'(c_2))\,(x-a)|
        \lesssim \frac{\kappa R}{\sqrt{\ell_V}}\,\delta^2\,.
    \end{align*}
    Similarly, for the derivative,
    \begin{align*}
        |\bar T'(x) - \hat T'(x)|
        &= \Bigl\lvert \bar T'(x) - \frac{\bar T(a + \delta) - \bar T(a)}{\delta}\Bigr\rvert
        = |\bar T'(x) - \bar T'(c_2)|
        \lesssim \frac{\kappa R}{\sqrt{\ell_V}} \,\delta\,.
    \end{align*}
    To obtain our desired error bounds, we take $\delta = \widetilde\Theta(\sqrt{\varepsilon/\kappa})$.
    Finally, to obtain the stated bounds in the theorem in dimension $d$, replace $\varepsilon$ with $\varepsilon/\sqrt d$.

    {With this choice of $\delta$, we then obtain $\varepsilon_1 = \widetilde O(\sqrt d\,\kappa \delta) = \widetilde O(\kappa^{1/2} d^{1/4} \varepsilon^{1/2})$.}

    \textbf{Size of the generating family.} Finally, the size of $\cM$ is $O({d} R/\delta) = \widetilde O(\kappa^{1/2} d^{{5}/4}/\varepsilon^{1/2})$, which completes the proof.
\end{proof}

In the proof above, we have used the bounds on the first and second derivatives of $\bar T$.
However, from~\Cref{thm:regularity}, we actually have control on the third derivative as well, so we can expect to exploit this added degree of smoothness to obtain better approximation rates.

As above, we fix a truncation parameter $R > 0$ and a mesh size $\delta > 0$.
Our family of maps will be constructed from the following basic building blocks.
\begin{itemize}
    \item \textbf{Linear function.} We let $\psi^{\rm lin}(x) \defeq x$ for $x\in\R$.
    \item \textbf{Piecewise quadratics.} Define the piecewise quadratic
    \begin{align*}
        \psi^{\rm quad,\pm}(x) \defeq \pm \begin{cases}
            0\,, & x \le 0\,, \\
            x^2\,, & x \in [0,1]\,, \\
            2x-1\,, & x \ge 1\,.
        \end{cases}
    \end{align*}
    \item \textbf{Piecewise cubics.} Define the piecewise cubic,
    \begin{align*}
        \psi^{\rm cub,\pm}(x) \defeq \pm\begin{cases}
            0\,, & x \le 0\,, \\
            x^2\,(3-2x)\,, & x \in [0,1]\,, \\
            1\,, & x \ge 1\,.
        \end{cases}
    \end{align*}
\end{itemize}
Given a univariate function $\psi$ and $i\in [d]$, we extend it to a map $\psi_i : \R^d\to\R^d$ by setting $\psi_i(x) \defeq \psi(x_i)$.
Also, given a sub-interval $I = [a,a+\delta]$, we define the map $\psi_{I,i} : \R^d\to\R^d$ via $\psi_{I,i}(x) \defeq \psi(\delta^{-1}\,(x_i-a))$.

Let $\cI$ denote the set of sub-intervals.
Our generating family will consist of
\begin{align*}
    \cM
    &\defeq \{\psi^{\rm lin}_i \mid i \in [d]\} \cup \bigcup_{I\in \cI} \bigcup_{i=1}^d {\{\psi^{\rm quad,-}_{I,i}, \psi^{\rm quad,+}_{I,i}, \psi^{\rm cub,-}_{I,i}, \psi^{\rm cub,+}_{I,i}\}}\,,
\end{align*}
which consists of $(4\,|\cI| + 1)\,d$ elements.
However, we will not consider the full cone generated by $\cM$---indeed, if we did, then the presence of the \emph{negative} piecewise quadratics and cubics would mean that we obtain non-monotone maps.

Elements of our polyhedral set will be of the form $x\mapsto \alpha \id + \sum_{T\in\cM}\lambda_T T + v$, where $v\in\R^d$ and we may decorate components of $\lambda$ according to the elements of $\cM$ to which they correspond, e.g., $\lambda_{I,i}^{\rm quad,-}$ is the coefficient in front of $\psi_{I,i}^{\rm quad,-}$.

To provide some intuition, we will use the linear function and the piecewise quadratics to approximate the \emph{derivative} of $\bar T$.
Indeed, suppose for the moment that $\bar T$ is univariate and note that the derivatives of the linear and piecewise quadratic functions give rise to piecewise linear interpolations of $\bar T'$.
The interpolation of $\bar T'$, once integrated, does not necessarily interpolate $\bar T$, and the piecewise cubics will be used to remedy this issue.

Toward this end, note that since $\bar T$ is monotonically increasing, $\bar T'$ is non-negative.
We will want our approximating $\hat T$ to have the same property, which will be ensured by imposing \emph{linear constraints} on $\lambda$.
We consider the following polyhedral subset of $\R_+^{|\cM|}$:
\begin{align}\label{eq:higher_approx_polyhedral}
    \begin{aligned}
        K
        \defeq \Bigl\{\lambda \in \R_+^{|\cM|} &\Bigm\vert \forall i\in [d],\; \frac{2}{\delta}\sum_{I\in\cI} (\lambda_{I,i}^{\rm quad,+}-\lambda_{I,i}^{\rm quad,-}) + \lambda^{\rm lin}_i \ge 0\,, \\
        &\qquad\qquad \text{and}\qquad\forall I \in \cI,\; \forall i \in [d], \; \frac{6\lambda^{\rm cub,-}}{\delta} \le \frac{\alpha}{2}\Bigr\}\,.
    \end{aligned}
\end{align}
We then take $\cK \defeq \{x \mapsto \alpha x + \sum_{T\in T} \lambda_T T + v \mid \lambda \in K, \; v\in\R^d\}$ and $\Pdiam \defeq \cK_\sharp \rho$.
The first constraint ensures that the sum of the linear and piecewise quadratic functions has non-negative slope.
As for the second constraint, it ensures that the sum of the negative piecewise cubic functions has slope at least $-\alpha/2$.
Since we always add $\alpha \id$, each of our maps will have slope at least $\alpha/2$ and therefore be increasing.
With this choice, our family consists of gradients of strongly convex functions with convexity parameter \emph{less} than that of the true map $T^\star$, which does affect some of the other results of this paper (e.g., the geodesic smoothness of the KL divergence in~\Cref{prop: kl_smooth}), but only by at most a constant factor, and henceforth we ignore this technical issue.

We are now ready to prove our improved approximation result.

\begin{proof}[Proof of~\Cref{thm:higher_order}]
    We start with the same reductions as in the proof of~\Cref{thm: pi_closeness}, reducing to the univariate case.

    \textbf{Truncation procedure.}
    The truncation procedure is similar to the one before, except that $\hat T$ is no longer constant on $(-\infty, -R]$ and on $[+R,+\infty)$.
    Instead, on these intervals, $\hat T$ will be linear, with the additional conditions $\bar T'(-R) = \hat T'(-R)$ and $\bar T'(+R) = \hat T'(+R)$.
    However, the arguments still go through, and we can take $R\asymp\sqrt{\log(1/(\ell_V \varepsilon^2))}$ as before.

    \textbf{Uniform approximation over a compact domain.}
    We will first construct a preliminary version of $\hat T$ without using the piecewise cubics.
    Recall from the discussion above that using the linear and piecewise quadratic functions, we can ensure that $\hat T'$ is a linear interpolation of $\bar T'$.
    Namely, we set
    \begin{align*}
        \hat T' = \bar T'(-R) + \sum_{I \in \cI} \bigl[\lambda_I^{\rm quad,-} \,(\psi^{\rm quad,-})' + \lambda_I^{\rm quad,+}\,(\psi^{\rm quad,+})'\bigr]\,,
    \end{align*}
    where the coefficients are chosen such that $\bar T'$ and $\hat T'$ agree at each of the endpoints of the sub-intervals.
    Following the argument as before, 
    for a sub-interval $I = [a,a+\delta]$ and $x\in I$,
    \begin{align*}
        |\bar T'(x) - \hat T'(x)|
        &= \Bigl\lvert \bar T'(x) - \bar T'(a) - \frac{\bar T'(a+\delta) - \bar T'(a)}{\delta}\,(x-a)\Bigr\rvert\,.
    \end{align*}
    By the mean value theorem, $\bar T'(x) = \bar T'(a) + \bar T''(c_1)\,(x-a)$ and $\bar T'(a+\delta) = \bar T'(a) + \bar T''(c_2)\,\delta$ for some $c_1, c_2 \in I$.
    Using the bounds on the derivatives of $\bar T$,
    \begin{align}\label{eq:higher_approx_error}
        |\bar T'(x) - \hat T'(x)|
        &= |(\bar T''(c_1) - \bar T''(c_2))\,(x-a)|
        \lesssim \frac{\kappa^2 R^2}{\sqrt{\ell_V}}\,\delta^2\,.
    \end{align}

    Next, we wish to control $|\bar T(x) - \hat T(x)|$. Here, $\hat T$ is defined by integrating $\hat T'$, and choosing the shift $v$ so that $\bar T(-R) = \hat T(-R)$.
    First, \emph{suppose} that $\bar T(a) = \hat T(a)$.
    We can then use the fundamental theorem of calculus to obtain
    \begin{align}\label{eq:endpoint_error}
        |\bar T(x) - \hat T(x)|
        &= \Bigl\lvert \int_a^x (\bar T'(y) - \hat T'(y))\dd y\Bigr\rvert
        \lesssim \frac{\kappa^2 R^2}{\sqrt{\ell_V}}\,\delta^3\,.
    \end{align}
    In particular, $|\bar T(a+\delta) - \hat T(a+\delta)|$ is of order $\delta^3$.

    To ensure that $\bar T$ and $\hat T$ agree at each of these endpoints, we scan the set of sub-intervals left to right, and we iteratively add non-negative multiples of the piecewise cubics in order to achieve this interpolating condition.
    Since the original endpoint error is bounded in~\eqref{eq:endpoint_error}, it follows that the coefficients of the piecewise cubics that we add are small: $0 \le \lambda^{\rm cub,\pm}_I \lesssim \kappa^2 R^2 \delta^3/\sqrt{\ell_V}$.
    In particular, the constraint on $\lambda^{\rm cub,-}_I$ in~\eqref{eq:higher_approx_polyhedral} is met for small $\delta$.

    The key property of the piecewise cubics is that $(\psi^{\rm cub,\pm})'(0) = (\psi^{\rm cub,\pm})'(1) = 0$.
    This means that even after adding the piecewise cubics, $\bar T'$ and $\hat T'$ agree at all of the endpoints of the sub-intervals.
    However, we must check that adding these piecewise cubics does not destroy the approximation rates~\eqref{eq:higher_approx_error} and~\eqref{eq:endpoint_error}.
    Since $|(\psi^{\rm cub,\pm}_I)'| \lesssim 1/\delta$, the bound on the coefficients for the piecewise cubics shows that the derivative of the piecewise cubic part of $\hat T$ is bounded in magnitude by $O(\kappa^2 R^2 \delta^2/\sqrt{\ell_V})$, so that~\eqref{eq:higher_approx_error} is intact.
    Similarly,~\eqref{eq:endpoint_error} is also intact, either by integrating~\eqref{eq:higher_approx_error} or by using the bound on the coefficients of the piecewise cubics.
    {Thus, $|\bar T(x) - \hat T(x)| \lesssim \kappa^2 R^2 \delta^3/\sqrt{\ell_V}$, and setting this to be at most $\varepsilon/\sqrt{d\ell_V}$ yields the choice $\delta = \widetilde\Theta(\varepsilon^{1/3}/(\kappa^{2/3} d^{1/6}))$.}

    {\textbf{Size of the generating family.} The size of the generating family is then $O(dR/\delta) = \widetilde O(\kappa^{2/3} d^{7/6}/\varepsilon^{1/3})$, which completes the proof.}
\end{proof}

Using the bounds on the Jacobian $D\hat T_\diamond$ of the approximating map, we can bound the change in the KL divergence on the path from $\hat\pi_\diamond$ to $\pi^\star$.
This shows that $\hat\pi_\diamond$ has a small \emph{suboptimality gap} for KL minimization over $\Pdiam$.
The following calculation is similar to the one for~\Cref{prop: kl_smooth}, which establishes smoothness of the KL divergence over $\Pdiam$. However, since $\pi^\star$ does not lie in $\Pdiam$, it does not apply here.

\begin{corollary}\label{cor: kl_diamond_approx}
    Assume that $\pi$ is well-conditioned~\ref{well_cond}.
    Let $\hat\pi_\diamond = (\hat T_\diamond)_\sharp \rho$ denote the approximation to $\pi^\star$ given by the piecewise linear construction (\Cref{thm: pi_closeness}).
    Then,
    \begin{align*}
        \kl{\hat\pi_\diamond}{\pi} - \kl{\pi_\diamond^\star}{\pi}
        &\le \kl{\hat\pi_\diamond}{\pi} - \kl{\pi^\star}{\pi}
        \lesssim \kappa^3 d^{1/2} \varepsilon\,.
    \end{align*}
    If, on the other hand, $\hat\pi_\diamond = (\hat T_\diamond)_\sharp \rho$ is given by the construction of~\Cref{thm:higher_order},
    \begin{align*}
        \kl{\hat\pi_\diamond}{\pi} - \kl{\pi^\star_\diamond}{\pi}
        &\le \kl{\hat\pi_\diamond}{\pi} - \kl{\pi^\star}{\pi}
        \lesssim \kappa^{10/3} d^{1/3} \varepsilon^{4/3}\,.
    \end{align*}
\end{corollary}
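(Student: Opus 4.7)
The first inequality follows immediately from a structural observation: every map in $\augtip$ is coordinate-wise separable (built from the direct-sum construction of~\Cref{rmk:generator_of_direct_sum}, a translation, and a scaled identity), so the pushforward $\pi^\star_\diamond = (\hat T_\diamond)_\sharp\rho$ of the Gaussian product $\rho$ is itself a product measure. Since $\pi^\star$ is the global minimizer of $\kl{\cdot}{\pi}$ over $\cP(\R)^{\otimes d}$, this yields $\kl{\pi^\star}{\pi}\le \kl{\pi^\star_\diamond}{\pi}$.

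For the main inequality, my plan is to analyze $g(t) \defeq \kl{\mu_t}{\pi}$ along the Wasserstein geodesic $\mu_t = (T_t)_\sharp\rho$ from $\pi^\star$ to $\hat\pi_\diamond$, where $T_t \defeq (1-t)\,T^\star + t\,\hat T_\diamond$. Since $T^\star$ and $\hat T_\diamond$ are both coordinate-wise separable monotone maps from $\rho$, they are compatible, so this is indeed the geodesic, and each $\mu_t$ remains a product measure. I would then establish $g'(0) = 0$ via the mean-field equations: the Wasserstein gradient of $\kl{\cdot}{\pi}$ at $\pi^\star$ is $\nabla V + \nabla \log \pi^\star$, and its projection onto coordinate-wise separable vector fields has $i$-th component $\E_{\pi^\star_{-i}}[\partial_i V] + (\log \pi^\star_i)' = V_i' - V_i' = 0$ by~\eqref{eq: mf_equations}. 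Since the initial geodesic velocity $\hat T_\diamond\circ (T^\star)^{-1} - \id$ is coordinate-wise separable, this gives $g'(0)=0$ and hence $g(1) - g(0) = \int_0^1 (1-t)\,g''(t)\dd t$.

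Next I would compute $g'' = \ddot{\cV} + \ddot{\cH}$ and bound it uniformly. By coordinate-wise separability of $T_t$ and the change of variables formula, direct differentiation gives
\begin{align*}
    \ddot{\cV}(\mu_t) = \int (\hat T_\diamond - T^\star)^\top \nabla^2 V(T_t)\,(\hat T_\diamond - T^\star)\dd\rho\,,\quad
    \ddot{\cH}(\mu_t) = \sum_{i=1}^d \int \frac{\bigl((\hat T_\diamond)_i' - (T^\star_i)'\bigr)^2}{\bigl(T_{t,i}'\bigr)^2}\,\rho_1(\!\dd x_i)\,.
\end{align*}
The first is at most $L_V\,\|\hat T_\diamond - T^\star\|_{L^2(\rho)}^2 = L_V\,W_2^2(\pi^\star, \hat\pi_\diamond)$. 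For the second, both $T^\star$ and $\hat T_\diamond$ have component-wise derivatives at least $\alpha = 1/\sqrt{L_V}$ (by~\Cref{thm:regularity} and the construction of $\augtip$), so $T_{t,i}'\ge 1/\sqrt{L_V}$ uniformly in $t$, yielding $\ddot{\cH}(\mu_t)\le L_V\,\|D(\hat T_\diamond - T^\star)\|_{L^2(\rho)}^2$. Substituting the approximation bounds $W_2^2(\pi^\star,\hat\pi_\diamond)\le\eps^2/\ell_V$ and $\|D(\hat T_\diamond - T^\star)\|_{L^2(\rho)}^2\lesssim\kappa\,d^{1/2}\,\eps/\ell_V$ from~\Cref{thm: pi_closeness} (and their analogues from~\Cref{thm:higher_order}) and integrating yields the claimed bounds up to polynomial-in-$\kappa$ factors.

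The main obstacle is justifying $g'(0) = 0$ cleanly: a one-sided argument via minimality of $\pi^\star$ over $\cP(\R)^{\otimes d}$ only gives $g'(0^+)\ge 0$, and extrapolating to negative $t$ is blocked because $T^\star - s\,(\hat T_\diamond - T^\star)$ need not remain monotone without an $L^\infty$ bound on $\hat T_\diamond'$. The mean-field identity circumvents this by witnessing the exact cancellation. A secondary subtlety is that the entropy is not globally smooth on the Wasserstein space, so the entropy second-derivative bound depends crucially on preserving $T_{t,i}'\ge 1/\sqrt{L_V}$ along the interpolation, which is precisely why augmenting the cone with $\alpha\,\id$ is essential.
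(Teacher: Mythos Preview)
Your approach is essentially the same as the paper's: differentiate $\kl{\mu_t}{\pi}$ twice along the geodesic from $\pi^\star$ to $\hat\pi_\diamond$, bound the potential and entropy Hessians, and use that the first-order term at $\pi^\star$ vanishes. Two minor remarks: (i) your direct parametrization from $\rho$ gives $\ddot\cH(\mu_t)\le L_V\,\|D(\hat T_\diamond - T^\star)\|_{L^2(\rho)}^2$, which is in fact a factor $\kappa$ sharper than the paper's two-stage bound (the paper bounds $(DT_t)^{-1}$ and $DT-I$ separately, picking up $\kappa L_V$), so your hedging ``up to polynomial-in-$\kappa$ factors'' is unnecessary---you recover the stated bounds with room to spare; (ii) your use of the mean-field equations to show $g'(0)=0$ exactly is a cleaner justification than the paper's appeal to minimality over product measures, which as you note only immediately gives the one-sided inequality.
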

\begin{proof}
    Let $(\mu_t)_{t\in [0,1]}$ denote the geodesic joining $\pi^\star$ to $\hat\pi_\diamond$.
    Then, by differentiating the KL divergence along this geodesic twice, we obtain the following expressions; see \citet{ChewiBook} and \citet[Appendix B.2]{diao2023forward} for derivations.
    We write $T = \hat T_\diamond \circ (T^\star)^{-1}$ for the optimal transport map from $\pi^\star$ to $\hat\pi_\diamond$, and $T_t = (1-t)\,{\id} + t\,T$.
    
    For the potential energy term,
    \begin{align*}
        \partial_t^2 \cV(\mu_t)
        &= \E_{\pi^\star}\langle T-{\id}, (\nabla^2 V \circ T_t)\,(T-{\id})\rangle
        \le L_V\,\E_{\pi^\star}\|T-{\id}\|^2
        = L_V\,\E_{\rho}\|\hat T_\diamond - T^\star\|^2\,.
    \end{align*}
    Next, for the entropy term,
    \begin{align*}
        \partial_t^2 \cH(\mu_t)
        &= \E_{\pi^\star}\|(DT_t)^{-1}\,(DT - I)\|_{\rm F}^2\,,
    \end{align*}
    By~\Cref{thm:regularity}, $DT = D((T^\star)^{-1})\,D\hat T_\diamond \succeq 1/\sqrt\kappa$, so $DT_t \succeq 1/\sqrt\kappa$. Also, $DT^\star \succeq 1/\sqrt{L_V}$. Therefore, we obtain
    \begin{align*}
        \partial_t^2 \cH(\mu_t)
        &\le \kappa \,\E_{\pi^\star}\|D((T^\star)^{-1})\,D\hat T_\diamond \circ (T^\star)^{-1} - I\|_{\rm F}^2 \\
        &\le \kappa L_V\,\E_{\pi^\star}\|D\hat T_\diamond \circ (T^\star)^{-1} - D((T^\star)^{-1})\|_{\rm F}^2
        = \kappa L_V\,\E_{\rho}\|D\hat T_\diamond - DT^\star\|_{\rm F}^2\,.
    \end{align*}
    Therefore, adding the two terms together,
    \begin{align*}
        \partial_t^2 \kl{\mu_t}{\pi}
        &\le L_V\,\|\hat T_\diamond - T^\star\|_{L^2(\rho)}^2 + \kappa L_V\,\|D(\hat T_\diamond - T^\star)\|_{L^2(\rho)}^2\,.
    \end{align*}

    Integrating this expression from $t=0$ to $t=1$,
    \begin{align*}
        \kl{\hat\pi_\diamond}{\pi} - \kl{\pi^\star}{\pi}
        &\le \E_{\pi^\star}\langle [\nabla_{\WW}\kl{\cdot}{\pi}](\pi^\star), T-{\id}\rangle \\
        &\qquad{} + \frac{L_V}{2}\,(\|\hat T_\diamond - T^\star\|_{L^2(\rho)}^2 + \kappa\,\|D\hat T_\diamond - DT^\star\|_{L^2(\rho)}^2)\,.
    \end{align*}
    However, since $\hat\pi_\diamond$, $\pi^\star$ both belong to the geodesically convex set of product measures, and $\pi^\star$ minimizes the KL divergence over this set, we must have $\E_{\pi^\star}\langle [\nabla_{\WW}\kl{\cdot}{\pi}](\pi^\star), T-{\id}\rangle = 0$.

    We are now in a position to apply the approximation guarantees.
    Applying the result of~\Cref{thm: pi_closeness}, we obtain
    \begin{align*}
        \kl{\hat\pi_\diamond}{\pi} - \kl{\pi^\star}{\pi}
        &\lesssim \kappa \varepsilon^2 + \kappa^3 d^{1/2}\varepsilon\,.
    \end{align*}
    If we instead use the improved guarantee of~\Cref{thm:higher_order}, we obtain
    \begin{align*}
        \kl{\hat\pi_\diamond}{\pi} - \kl{\pi^\star}{\pi}
        &\lesssim \kappa \varepsilon^2 + \kappa^{10/3} d^{1/3} \varepsilon^{4/3}\,. \qedhere
    \end{align*}
\end{proof}

Finally, from the small suboptimality gap of $\hat\pi_\diamond$ and the strong geodesic convexity of the KL divergence, we are able to prove that $\pi^\star$ is close, not just to our constructed $\hat\pi_\diamond$, but to the minimizer $\pi^\star_\diamond$ of the KL divergence over $\Pdiam$, which in turn can be computed via the algorithms in~\Cref{sec: mfvi_convergence}.

\begin{proof}[Proof of~\Cref{thm:approx_improved}]
By triangle inequality, we have
\begin{align*}
    W_2(\pi^\star_\diamond,\pi^\star) \leq W_2(\pi^\star_\diamond,\hat\pi_\diamond)  + W_2(\hat\pi_\diamond, \pi^\star)\,, 
\end{align*}
and since we can control the second term (recall \cref{thm: pi_closeness}), it suffices to control the first. Since $\kl{\cdot}{\pi}$ is $\ell_V$-strongly geodesically convex, the first term can be bounded above by
\begin{align*}
   \ell_V W_2^2(\pi^\star_\diamond,\hat\pi_\diamond)/2 \leq \kl{\hat\pi_\diamond}{\pi} - \kl{\pi^\star_\diamond}{\pi} \lesssim \kappa^3d^{1/2}\tilde\eps\,,
\end{align*}
where the final bound is obtained from \cref{cor: kl_diamond_approx} (we only take the worst-case scaling term), and $\tilde\varepsilon$ is the approximation accuracy guaranteed by~\Cref{thm: pi_closeness}. Setting this equal to $\eps^2$, we apply~\Cref{thm: pi_closeness} with $\frac{\eps^2}{\kappa^3 d^{1/2}}$ replacing $\varepsilon$ and we see that $|\cM| = \widetilde{O}(\kappa^2 d^{3/2}/\varepsilon)$.

    Similarly, for the higher-order approximation scheme, we use~\Cref{cor: kl_diamond_approx} and apply~\Cref{thm:higher_order} with $\frac{\varepsilon^{3/2}}{\kappa^{5/2} d^{1/4}}$ replacing $\varepsilon$, obtaining $|\cM| = \widetilde O(\kappa^{3/2} d^{5/4}/\varepsilon^{1/2})$.
\end{proof}

\subsection{Proofs for Section~\ref{sec: mfvi_convergence}}

\begin{proof}[Proof of \cref{prop: kl_smooth}]
We write 
\begin{align*}
    \kl{\mu}{\pi} = \cV(\mu) + \cH(\mu) \defeq \int V \dd \mu + \int \log \mu\dd\mu +  \log Z\,.
\end{align*}
To prove smoothness, it suffices to show that the Wasserstein Hessians for both $\cV$ and $\cH$ are bounded.
Since we work with the augmented cone, we let
\begin{align*}
    T^{\lambda, v}
    &\defeq \alpha \id + \sum_{T\in\cM} \lambda_T T + v\,, \qquad \mu_{\lambda,v} \defeq {(T^{\lambda,v})}_\sharp \rho\,.
\end{align*}
Our goal is to upper bound the following quadratic forms
\begin{align*}
    & \nabla_{\WW}^2 \cV(\mu_{\lambda,v})[T_{\lambda,v}^{\eta,u} - \text{id},T_{\lambda,v}^{\eta,u} - \text{id}] = \E_{\mu_{\lambda,v}} [(T_{\lambda,v}^{\eta,u} - {\id})^\top\, \nabla^2 V\,(T_{\lambda,v}^{\eta,u} - {\id})]\,, \\
    &\nabla_{\WW}^2 \cH(\mu_{\lambda,v})[T_{\lambda,v}^{\eta,u} - \text{id},T_{\lambda,v}^{\eta,u} - \text{id}] = \E_{\mu_{\lambda,v}}\|DT_{\lambda,v}^{\eta,u} - I\|^2_{\rm F}\,,
\end{align*}
in terms of the squared Wasserstein distance between $\mu_{\lambda,v}$ and $\mu_{\eta,u}$, and $T_{\lambda,v}^{\eta,u}$ is the optimal transport map from $\mu_{\lambda,v}$ to $\mu_{\eta,u}$. See \citet{ChewiBook} and \citet[Appendix B.2]{diao2023forward} for derivations of these expressions. We bound the two terms separately. 

An upper bound on the potential term is straightforward. By \ref{well_cond}, $\nabla^2V \preceq L_V I$, and so
\begin{align*}
    \nabla_{\WW}^2 \cV(\mu_{\lambda,v})[T_{\lambda,v}^{\eta,u} - \text{id},T_{\lambda,v}^{\eta,u} - \text{id}]
    &= \E_{\mu_{\lambda,v}} [(T_{\lambda,v}^{\eta,u} - {\id})^\top\, \nabla^2 V\,(T_{\lambda,v}^{\eta,u} - {\id})] \\
    &\leq L_V\, \E_{\mu_{\lambda,v}}\|T_{\lambda,v}^{\eta,u} - {\id}\|^2
    = L_V\,W_2^2(\mu_{\lambda,v}, \mu_{\eta,u})\,.
\end{align*}

The entropy term needs a bit more work. To start, 
we note that by compatibility,
\begin{align}\label{eq: comp_app}
    T_{\lambda,v}^{\eta,u}
    = T^{\eta,u} \circ (T^{\lambda,v})^{-1}
    = T^\eta \circ (T^\lambda)^{-1}(\cdot - v) + u\,,
\end{align}
where we write $T^{\lambda,v} = T^\lambda + v$ and similarly $T^{\eta,u} = T^\eta + u$.
By the chain rule,
\begin{align*}
    DT_{\lambda,v}^{\eta,u}(\cdot)
    &= [DT^\eta \circ (T^\lambda)^{-1}(\cdot - v)] \, D[(T^\lambda)^{-1}](\cdot - v)\,.
\end{align*}
(For simplicity, the reader may wish to first read the following calculations setting $u=v=0$.)
Performing the appropriate change of variables, the Wasserstein Hessian of $\cH$ reads
\begin{align*}
    \E_{\mu_{\lambda,v}}\|DT_{\lambda,v}^{\eta,u} - I\|^2_{\rm F}
    &= \E_{\mu_{\lambda,v}}\|[DT^\eta \circ (T^\lambda)^{-1}(\cdot - v)] \, D[(T^\lambda)^{-1}](\cdot - v) - I\|^2_{\rm F}\\
    &= \E_{\rho}\| DT^\eta\,D[(T^\lambda)^{-1}] \circ (T^{\lambda,v} - v) - I\|_{\rm F}^2 \\
    &= \E_{\rho}\|DT^\eta\, D[(T^\lambda)^{-1}] \circ T^\lambda  - I\|^2_{\rm F} \\
    &= \E_{\rho}\|DT^\eta\, (D T^\lambda)^{-1}  - I\|^2_{\rm F}\,,
\end{align*}
where we invoked the inverse function theorem in the last step. Given our set of maps, we know that for any $\lambda \in \R^{|\cM|}_+$, $DT^\lambda \succeq \alpha I$, and since $DT^\lambda \, (DT^\lambda)^{-1} = I$, we obtain
\begin{align*}
    \E_{\mu_{\lambda,v}}\|DT_{\lambda,v}^{\eta,u} - I\|^2_{\rm F}
    \leq \frac{1}{\alpha^2}\,\E_{\rho}\|DT^\eta - DT^\lambda\|^2_{\rm F}\,.
\end{align*}
Since our maps are regular (i.e., \ref{regular_dict} holds), there exists $\Upsilon > 0$ such that
\begin{align*}
    \E_\rho\|DT^\eta - DT^\lambda\|_{\rm F}^2
    = \E_\rho\Bigl\lVert \sum_{T\in\cM} (\lambda_T - \eta_T)\,DT\Bigr\rVert_{\rm F}^2
    &= \langle \eta - \lambda, Q^{(1)}\,(\eta-\lambda)\rangle \\
    &\le \Upsilon\, \langle \eta - \lambda, Q\,(\eta-\lambda)\rangle\,.
\end{align*}
Finally, note that
\begin{align*}
    W_2^2(\mu_{\lambda,v}, \mu_{\eta,u})
    = \E_\rho\Bigl\lVert \sum_{T\in\cM} (\eta_T - \lambda_T)\,T + u - v\Bigr\rVert^2
    &= \E_\rho\Bigl\lVert \sum_{T\in\cM} (\eta_T - \lambda_T)\,T \Bigr\rVert^2 + \|u-v\|^2 \\
    &= \langle \eta - \lambda, Q\,(\eta-\lambda)\rangle + \|u-v\|^2\,,
\end{align*}
where we used the fact that the maps in $\cM$ are \emph{centered}.
This shows that
\begin{align*}
    \nabla_{\WW}^2 \cH(\mu_{\lambda,v})[T_{\lambda,v}^{\eta,u} - \text{id},T_{\lambda,v}^{\eta,u} - \text{id}]
    &\le \frac{\Upsilon}{\alpha^2}\,W_2^2(\mu_{\lambda,v}, \mu_{\eta,u})\,.
\end{align*}
Combining all of the terms completes the proof.
\end{proof}

{
\begin{proof}[Proof of Lemma~\ref{lem:Upsilon_lemma}]
We restrict our attention to the piecewise linear family denoted $\cM$ in dimension one with $|\cM| = J$. This suffices due to the tensorization property of $\Upsilon$, see the remark after the definition of $\Upsilon$. It suffices to prove, for all $\lambda \in \R^J$, 
    \begin{align*}
        \bigl\|\sum_{T \in \cM} \lambda_T T' \bigr\|_{L^2(\rho)}^2 \leq \Upsilon\, \bigl\|\sum_{T \in \cM} \lambda_T T \bigr\|_{L^2(\rho)}^2\,,
    \end{align*}
    where $\rho = \cN(0,1)$. We truncate the domain of $\rho$ to $[-R,R]$, where $R \asymp \sqrt{\log(1/(\ell_V \eps^2))}$. On some interval $[a, a+\delta]$, note that
    \begin{align*}
        T^\lambda(x) = T^\lambda(a) + \lambda_T\,((x-a)/\delta)_+\,, \quad DT^\lambda(x) = \lambda_T/\delta\,.
    \end{align*}
    It thus suffices to prove the statement on such an interval. This is equivalent to proving that
    \begin{align*}
        \int_a^{a+\delta} \Bigl(\frac{\lambda_T}{\delta}\Bigr)^2\, \rho({\rm d} x) \leq \Upsilon \int_a^{a+\delta} \Bigl(T^\lambda(a) + \lambda_T\, \frac{x-a}{\delta}\Bigr)^2\,\rho({\rm d}x)\,.
    \end{align*}
    Rearranging, it suffices to show that
    \begin{align*}
        \delta^{-2}\Upsilon^{-1} \leq \frac{\inf_{m\in\mathbb R} \int_a^{a+\delta}  \bigl(\frac{x-a}{\delta} - m\bigr)^2\,\rho({\rm d}x) }{\int_a^{a+\delta} \rho({\rm d}x)} = \frac{{ \rm{var}}X}{\delta^2}\,,
    \end{align*}
    or $\Upsilon^{-1} \leq {\rm{var}}X$, with $X \sim \rho|_{[a,a+\delta]}$.

    Letting $m_a \defeq \E X$, suppose WLOG $m_a  \leq a + \delta/2$. We compute
    \begin{align*}
        \E[(X - m_a)^2] \geq \E[(\delta/4)^2\, \bm{1}_{X \geq a + 3\delta/4}] \gtrsim \delta^2\,\mathbb{P}(X \geq a + 3\delta/4) = \delta^{2}\,\frac{\int_{a + 3\delta/4}^{a+\delta}\rho({\rm d}x)}{\int_{a}^{a+\delta}\rho({\rm d}x)}
        \gtrsim \delta^2\,,
    \end{align*}
    provided $\delta \lesssim 1/R$; indeed, for this choice of $\delta$, $|{\log \rho(x)-\log\rho(y)}| \lesssim 1$ for all $x,y \in [a,a+\delta]$. Stringing together the inequalities, we obtain the desired claim.
\end{proof}
}

\subsection{Proofs for Section~\ref{sec: sgd_mfvi}}\label{app:mf_var_bd}

In this section, we prove our variance bounds for SPGD for mean-field VI\@. We start with a gradient bound under $\pi^\star$.

\begin{lemma}\label{lem: mfvi_nablav_lemma}
    Let $\pi$ be a \ref{well_cond} measure, and let $\pi^\star$ be the mean-field approximation. Then 
    \begin{align}
        \E_{\pi^\star}\nabla V = 0\,, \qquad \E_{\pi^\star}\|\nabla V\|^2 \le L_V\kappa d\,.
    \end{align}
\end{lemma}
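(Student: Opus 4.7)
The plan is to handle the two claims separately, using integration by parts for the first identity and the Poincar\'e inequality for strongly log-concave measures for the second.

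For the first claim, by linearity it suffices to show that $\E_{\pi^\star}\partial_i V = 0$ for each coordinate $i \in [d]$. Since $\pi^\star = \bigotimes_j \pi^\star_j$ is a product measure, integrating out the coordinates $x_j$ for $j \ne i$ against $\bigotimes_{j \ne i}\pi^\star_j$ yields, by the definition of $V_i$ preceding~\eqref{eq: mf_equations}, exactly $V_i'(x_i)$. Thus $\E_{\pi^\star}\partial_i V = \E_{\pi^\star_i}V_i'$. The mean-field equation~\eqref{eq: mf_equations} gives $\pi^\star_i \propto \exp(-V_i)$, so $V_i'\,\pi^\star_i = -(\pi^\star_i)'$, and integrating over $\R$ produces zero as a boundary term, using the rapid decay of the strongly log-concave one-dimensional density $\pi^\star_i$ at infinity.

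For the second bound, I would first combine the first claim with the identity $\E_{\pi^\star}(\partial_i V)^2 = \var_{\pi^\star}(\partial_i V)$. The key observation is that $\pi^\star$ is $\ell_V$-strongly log-concave: by~\Cref{prop: mfe_properties}, each factor $\pi^\star_i$ is $\ell_V$-strongly log-concave, and a product of $\ell_V$-strongly log-concave measures remains $\ell_V$-strongly log-concave (the Hessian of the log-density is block diagonal with blocks lower-bounded by $\ell_V$). Hence $\pi^\star$ satisfies the Poincar\'e inequality with constant $1/\ell_V$. Applied to $f = \partial_i V$, we note that $\nabla f$ is precisely the $i$-th row of $\nabla^2 V$, so $\|\nabla f\|^2 = \|(\nabla^2 V)\,e_i\|^2 \le \|\nabla^2 V\|_{\rm op}^2 \le L_V^2$ pointwise. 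Therefore $\var_{\pi^\star}(\partial_i V) \le L_V^2/\ell_V = L_V\kappa$, and summing over $i \in [d]$ yields the claimed bound $\E_{\pi^\star}\|\nabla V\|^2 \le L_V\kappa d$.

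Neither step presents a real obstacle: the integration by parts in the first claim is justified by the Gaussian-type tails of the strongly log-concave density $\pi^\star_i$, and the second step is a direct invocation of the Poincar\'e inequality together with the crude row-norm bound on $\nabla^2 V$.
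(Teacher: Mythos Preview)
Your proof is correct and essentially matches the paper's. The first claim is handled identically via integration by parts using $\pi^\star_i \propto \exp(-V_i)$. For the second claim, the paper invokes the Brascamp{--}Lieb inequality for $\pi^\star$ (whose log-Hessian is $\diag(V_1'',\dotsc,V_d'')$) rather than the Poincar\'e inequality, but since it immediately applies the crude bound $(V_i'')^{-1} \le 1/\ell_V$, the Brascamp{--}Lieb step collapses to exactly the Poincar\'e inequality you use; the remaining estimate $\|(\nabla^2 V)\,e_i\|^2 \le L_V^2$ and the sum over $i$ are the same in both arguments.
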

\begin{proof}
{Recall our definition of $\pi^\star$ with components $\pi_i \propto \exp(-V_i)$ with
\begin{align*}
    V_i(x_i) \int_{\R^{d-1}}V(x_1,\ldots,x_d)\, \bigotimes_{j\neq i} \pi^\star_j(\!\dd x_j)
\end{align*}
}
    Assuming the first claim, we can prove the second by applying the Brascamp{--}Lieb inequality~\citep{BraLie1976}:
    \begin{align*}
        \E_{\pi^\star}\|\nabla V - \E_{\pi^\star}\nabla V \|^2 \leq \E_{\pi^\star}\tr((\nabla^2 V)^2\, \text{diag}(\vec{V}'')^{-1} )\,, 
    \end{align*}
    where $\vec{V}''\defeq (V_1'',\ldots,V_d'')$. By~\Cref{prop: mfe_properties}, each component satisfies the bound $(V_i'')^{-1} \leq 1/\ell_V$, and we also have by assumption $\nabla^2V\preceq L_V I$. Together, the bound is clear:
    \begin{align*}
        \E_{\pi^\star}\|\nabla V \|^2 \leq \tr((L_VI)^2)/\ell_V = L_V\kappa d\,.
    \end{align*}
    It remains to prove the first equality. Recall that for $i \in [d]$, 
    \begin{align*}
        V_i(x_i) = \int V(x)\, \bigotimes_{j \neq i}\pi^\star_j(\!\dd x_j)\,.
    \end{align*}
    Consider a test vector $e_1 = (1,0,\ldots,0) \in \R^d$. Appropriately interchanging the order of integration, one can check that we obtain
    \begin{align*}
        \E_{\pi^\star}\nabla V^\top e_1 = \int V_1'(x_1)\, \pi^\star_1(\!\dd x_1) =  \int V_1'(x)\, \frac{\exp(-V_1(x_1))}{\int\exp(-V_1(x_1'))\dd x_1'}\dd x_1 = 0\,,
    \end{align*}
    by an application of integration by parts. The same is true for the other coordinates.
\end{proof}

\begin{proof}[Proof of \cref{lem: mfvi_sgd_varbound}]
We want to bound the quantity
\begin{align*}
    \E[\|Q^{-1}\,(\hat{\nabla}_\lambda \cV(\mu_\lambda) - {\nabla}_\lambda \cV(\mu_\lambda))\|_Q^2] = \E[\|Q^{-1/2}\,(\hat{\nabla}_\lambda \cV(\mu_\lambda) - {\nabla}_\lambda \cV(\mu_\lambda))\|^2]\,.
\end{align*}
Using convenient notation choices, we first recall the expressions of the stochastic and non-stochastic gradients of the potential energy:
\begin{align*}
    \hat{\nabla}_\lambda \cV(\mu_\lambda) = \bm{T}(\hat{X})\,\nabla V(T^\lambda(\hat X)) \,, \qquad \nabla_\lambda \cV(\mu_\lambda) = \E_\rho[\bm{T} \,\nabla V\circ T^\lambda]\,,
\end{align*}
where $\hat{X} \sim \rho$ is a random draw, and $\bm{T}(\hat{X}) = (T_1(\hat{X}),\ldots,T_{|\cM|}(\hat{X})) \in \R^{|\cM|} \times \R^d$ is the evaluation of the \emph{whole} dictionary at the random draw. 

We begin by exploiting symmetry in the problem, reducing it to one dimension. First, note that $\bm{T}$ can be equivalently expressed as $d$ repetitions of the following vectors,
\begin{align*}
    \bm{T} = (T_{1:J},\ldots,T_{1:J})\,,
\end{align*}
where $T_{1:J}$ denotes the first $J$ maps in our dictionary (the same maps exist in all dimensions)
(This is a slight abuse of notation because the $i$-th occurrence of $T_{1:J}$ above acts only on the $i$-th coordinate of the input.)
Thus, the matrix $Q^{-1/2}$ is block-diagonal, written
\begin{align*}
    Q^{-1/2} = I_d \otimes Q_{1:J}^{-1/2}\,,
\end{align*}
where $Q_{1:J}$ is the first $J \times J$ block of the full $Q$ matrix, which is $Jd \times Jd$. We can similarly express the gradients with respect to $\lambda$ in this way (i.e., only differentiating the first $J$ components), which results in controlling the following quantity
\begin{align*}
    \E[\|Q^{-1/2}\,(\hat{\nabla}_\lambda \cV(\mu_{\lambda}) - {\nabla}_\lambda \cV(\mu_{\lambda}))\|^2] = \sum_{i=1}^d \E[\|Q_{1:J}^{-1/2}\,(\hat{\nabla}_{1:J} \cV(\mu_{\lambda}) - {\nabla}_{1:J} \cV(\mu_{\lambda}))\|^2]\,.
\end{align*}
Combining these reductions, we are left with bounding the following term in each dimension:
\begin{align*}
    \tr \Cov\bigl(Q_{1:J}^{-1/2} \, T_{1:J}(\hat X_i) \, \partial_i V(T^\lambda(\hat X)\bigr)
    &= \E\bigl[\langle T_{1:J}(\hat X_i)\,T_{1:J}(\hat X_i)^\top, Q_{1:J}^{-1}\rangle \,\partial_i V(T^\lambda(\hat X))^2\bigr] \\
    &\le \Xi J\, \E[\partial_i V(T^\lambda(\hat X))^2]\,,
\end{align*}
where we invoked \ref{boundedness} in the last inequality.
Summing over the coordinates,
\begin{align*}
    \E[\|Q^{-1/2}\,(\hat{\nabla}_\lambda \cV(\mu_{\lambda}) - {\nabla}_\lambda \cV(\mu_{\lambda}))\|^2]
    &\le \Xi J\, \E_\rho\|\nabla V\circ T^\lambda\|^2\,.
\end{align*}

We bound the remaining expectation by repeatedly invoking smoothness of $V$. First,
\begin{align*}
    \E_\rho\|\nabla V \circ T^\lambda\|^2
    &\leq 2\,\E_\rho\|\nabla V\circ T^\lambda - \nabla V\circ T^\star_\diamond\|^2 + 2\, \E_\rho\|\nabla V\circ T^\star_\diamond\|^2 \\
    &\leq 2 L^2_V\, \|T^\lambda - T^\star_\diamond\|^2_{L^2(\rho)} + 2\, \E_\rho\|\nabla V\circ T^\star_\diamond\|^2 \\
    &= 2L^2_V\,W_2^2(\mu_\lambda,\pi^\star_\diamond) + 2\, \E_\rho\|\nabla V\circ T^\star_\diamond\|^2\,.
\end{align*}
For the next term, we apply the same trick, but we compare against $\pi^\star$, the true mean-field approximation:
\begin{align*}
    \E_\rho\|\nabla V\circ T^\lambda\|^2 &\leq 2L^2_V\,W_2^2(\mu_\lambda,\pi^\star_\diamond) + 4\, \E_\rho\|\nabla V\circ T^\star_\diamond - \nabla V\circ T^\star\|^2  + 4\, \E_\rho\|\nabla V\circ T^\star\|^2  \\
    &\leq 2L^2_V\, W_2^2(\mu_\lambda,\pi^\star_\diamond) + 4L_V^2\,W_2^2(\pi^\star_\diamond,\pi^\star) + 4L_V\kappa d\,,
\end{align*}
where we used \Cref{lem: mfvi_nablav_lemma} in the last step.

Our full variance bound reads
\begin{align*}
    \E[\|Q^{-1}\,(\hat{\nabla}_\lambda \cV(\mu_\lambda) - {\nabla}_\lambda \cV(\mu_\lambda))\|_Q^2] 
    &\leq 2L_V^2\Xi J\, W_2^2(\mu_{\lambda},\pi^\star_\diamond) \\
    &\qquad + 4L_V \Xi J \, (L_V \,W_2^2(\pi^\star_\diamond,\pi^\star) + \kappa d)\,. \qedhere
\end{align*}
\end{proof}

{Finally, we also prove the bound on $\Xi$ for the piecewise linear dictionary.
\begin{proof}[Proof of~\cref{lem:Xi}]
    If we can show that $Q \succeq \gamma I$ for some $\gamma > 0$, then
    \begin{align*}
        \langle Q^{-1}, \bar Q(x)\rangle
        &\le \gamma^{-1} \tr \bar Q(x)
        = \gamma^{-1}\sum_{T\in \cM} {T(x)}^2
        \le \gamma^{-1} J\,,
    \end{align*}
    where we use the fact that the elements of the piecewise linear dictionary are uniformly bounded by $1$.

    To prove the lower bound on $Q$, we note that for any $\lambda \in \R^J$,
    \begin{align*}
        \langle \lambda, Q\,\lambda\rangle
        &= \Bigl\lVert \sum_{T\in\cM}\lambda_T T\Bigr\rVert_{L^2(\rho)}^2\,.
    \end{align*}
    On an interval $[a,a+\delta]$, since $T^\lambda(x) = T^\lambda(a) + \lambda_T\,((x-a)/\delta)_+$,
    \begin{align*}
        \int_a^{a+\delta} T^\lambda(x)^2\,\rho(\mathrm d x)
        &= \int_a^{a+\delta} \Bigl(T^\lambda(a) + \lambda_T\,\frac{x-a}{\delta}\Bigr)^2\,\rho(\mathrm d x)
        \ge \lambda_T^2 \inf_{m\in \R} \int_a^b \bigl(\frac{x-a}{\delta} - m\bigr)^2\,\rho(\mathrm dx) \\
        &\gtrsim \lambda_T^2 \delta^2 \int_a^{a+\delta}\rho(\mathrm dx)\,,
    \end{align*}
    where we used the variance bound from the proof of~\cref{lem:Upsilon_lemma}.
    Summing across the intervals, we find that $\langle \lambda,Q\,\lambda \rangle \gtrsim \delta^2 \sum_{T\in\cM} \lambda_T^2$, so we can take $\gamma \asymp \delta^2$.
    This leads to an upper bound on $\Xi$ of order $\delta^{-2} \asymp J^2$.
\end{proof}
}

\section{Remaining implementation details}\label{sec:experiment_details}

\subsection{Product Gaussian mixture}
Let $V_1$ (resp.\ $V_2$) be the potential for a univariate Gaussian mixture with weights $w_{1,1}$ and $w_{1,2}$ (resp.\ $w_{2,1}$ and $w_{2,2}$) that sum to unity, and centers $m_{1,1}$ and $m_{1,2}$ (resp.\ $m_{2,1}$ and $m_{2,2}$), where all the mixture components have unit variance. Then, $V :\R^2\to\R$ defined by $V(x,y) = V_1(x) + V_2(y)$ is the potential for the Gaussian mixture with mean-weight pairs given by
\begin{align*}
    \{ &([m_{1,1}, m_{2,1}], w_{1,1}w_{2,1}), ([m_{1,1}, m_{2,2}], w_{1,1}w_{2,2}), \\
    &([m_{1,2}, m_{2,1}], w_{1,2}w_{2,1}), ([m_{1,2}, m_{2,2}], w_{1,2}w_{2,2})\}\,.
\end{align*}
We take $m_{1,1} = m_{2,1} = 2$, $m_{1,2}=m_{2,2} = -2$, with $w_{1,1} = w_{2,2} = 0.25$ and  $w_{1,2}=w_{2,1}=0.75$. As for the hyperparameters of our model, we chose $J = 28$, $\alpha=0.1$, a step-size $h = 10^{-3}$ (for both $\lambda$ and $v$), ran for 3000 iterations, and initialized at $\lambda^{(0)} = 0_{2\times J} \in \R^{2 \times J}$, and $v^{(0)} = 0_2 \in \R^2$. The KDE plots were generated via $\texttt{sklearn}$, after we generated 50,000 samples from the ground truth density and from our algorithm. 

\subsection{Non-isotropic Gaussian}\label{sec:nonisogaussian_exp}
We generated $A \in \R^{d\times d}$ with entries $A_{i,j} \sim \cN(0,1)$, and defined $\Sigma = AA^\top$ for $d=5$, which is fixed once and for all. We computed the optimal $\alpha^* = 1/\sqrt{L_V}$, since the potential is a Gaussian.  For the remaining hyper-parameters of our model, we chose $J = 28$, a step-size $h = 10^{-4}$ (for both $\lambda$ and $v$), ran for $2000$ iterations, and initialized at $\lambda^{(0)} = \bm{1}_{d \times J} \in \R^{d \times J}$, the all-ones matrix, and $v^{(0)} = 0_d \in \R^d$. At each step, we computed $\hat{\Sigma}_{\text{MF}}$ by pushing forward 10,000 samples, computing the empirical covariance, and computing the Bures--Wasserstein distance to $\Sigma_{\text{MF}}$. 

We now compute the fact that $\Sigma_{\text{MF}}$ is diagonal with components $1/(\Sigma^{-1})_{i,i}$ for $i \in [d]$. Recall the KL divergence between two Gaussians with mean zero is given by
\begin{align*}
    \kl{\cN(0,A)}{\cN(0,\Sigma)} = \frac{1}{2}\bigl[\tr(\Sigma^{-1}A)-d + \log\det(\Sigma^{-1}) - \log\det(A)\bigr]\,.
\end{align*}
Now, we impose that $A$ is a diagonal matrix with entries $A_{i,i} = a_i$ for some $a_i \geq 0$. In this case, up to constants denoted by $C$, the above reads
\begin{align*}
    \kl{\cN(0,A)}{\cN(0,\Sigma)} = \frac{1}{2}\sum_{i=1}^d \bigl[(\Sigma^{-1})_{i,i}a_i - \log(a_i)\bigr] + C\,. 
\end{align*}
Taking the derivative in $a_i$, we see that the optimality conditions yield 
\begin{align*}
    1/(\Sigma^{-1})_{i,i} = a_i^\star
\end{align*}
for every $i \in [d]$, which completes the calculation.
\subsection{Bayesian logistic regression}\label{sec:blogreg_exp}
We first randomly drew $\theta^\star \sim \cN(0,I_d)$ in $d=20$ as the ground truth parameter. Further, we let $n=100$ and randomly generated $X \in \R^{n\times d}$ as in the non-isotropic Gaussian experiment (here, $X$ takes the role of $A$), but we divided the matrix by $\lambda_{\max}(X^\top X)$ for normalization purposes. Subsequently, $Y_i$ was generated for each $i$ independently according to
\begin{align*}
    Y_i \mid X_i \sim \text{Bern}(\exp(\theta^\top X_i))\,,
\end{align*}
where $X_i$ is a row of $X$. Using this data, and assuming an improper (Lebesgue) prior on $\theta$, the potential of the posterior is given by 
\begin{align*}
    V(\theta) = \sum_{i=1}^n \bigl[\log(1 + \exp(\theta^\top X_i)) - Y_i\, \theta^\top X_i\bigr]\,.
\end{align*}

With access to $V$ and $\nabla V$, we ran standard Langevin Monte Carlo (LMC) for 5000 iterations with a step size of $h=10^{-2}$, where we generated 2000 samples.

For the hyperparameters of our model, we chose $J = 28$, $\alpha=0.1$, a step size $h = 10^{-2}$ for the $\lambda$ iterates, and $h_v = 10^{-1}$ for updating $v$, and ran for $2000$ iterations. We initialized at $\lambda^{(0)} = \bm{1}_{d \times J}/(Jd) \in \R^{d \times J}$, and $v^{(0)} = 0_d \in \R^d$. The final histograms were generated using 2000 samples from both the mean-field VI algorithm and LMC\@.

\section{Proofs for Section~\ref{sec:mixtures_of_prod}}\label{app:pf_mixtures}

In this section, we derive the gradient flows in~\Cref{sec:mixtures_of_prod}.

\begin{proof}[Proof of~\Cref{thm:wgf_mixtures}]
    We refer to~\citet[Appendix F]{lambert2022variational} for the relevant background.
    The first variation of the functional $\cF(P)\defeq \kl{\mu_P}{\pi}$ is given by
    \begin{align}\label{eq:first_var_mixtures}
        \delta \cF(P) : (\lambda, v) \mapsto \int (V + \log \mu_P + 1) \dd \mu_{\lambda, v} = \int \log \frac{\mu_P}{\pi}\dd \mu_{\lambda,v} + 1\,.
    \end{align}
    Therefore, the Wasserstein gradient is given by
    \begin{align}\label{eq:wass_grad_mixtures}
        \nabla_{\WW} \cF(P)(\lambda, v)
        &= \Bigl(Q^{-1}\,\nabla_\lambda \int \log \frac{\mu_P}{\pi}\dd\mu_{\lambda,v}, \, \nabla_v \int \log \frac{\mu_P}{\pi}\dd \mu_{\lambda,v}\Bigr)\,.
    \end{align}
    These terms are further computed as follows. First,
    \begin{align*}
        \partial_{\lambda_T} \int \log \frac{\mu_P}{\pi}\dd \mu_{\lambda,v}
        &= \partial_{\lambda_T} \int \log \frac{\mu_P}{\pi} \circ T^{\lambda,v} \dd \rho
        = \int \bigl\langle \nabla \log \frac{\mu_P}{\pi} \circ T^{\lambda,v}, T\bigr\rangle\dd \rho\,.
    \end{align*}
    Similarly, we have
    \begin{align*}
        \nabla_v \int \log \frac{\mu_P}{\pi} \dd \mu_{\lambda,v}
        &= \int \nabla \log \frac{\mu_P}{\pi} \circ T^{\lambda,v} \dd \rho\,.
    \end{align*}
    This concludes the proof.
\end{proof}

\begin{proof}[Proof of~\Cref{thm:wfr_flow}]
    This theorem follows from the expression of the first variation computed in~\eqref{eq:first_var_mixtures}, see~\citet[Appendix H]{lambert2022variational}.
\end{proof}

\end{document}